\theoremstyle{plain}
	\newtheorem{Theo}{Theorem}[section] 
	\newtheorem{Prop}[Theo]{Proposition}        
	\newtheorem{Lemm}[Theo]{Lemma}            
	\newtheorem{Coro}[Theo]{Corollaire}
	\newtheorem{Rema}[Theo]{Remark}
\theoremstyle{definition}
	\newtheorem{Defi}[Theo]{Definition}
\theoremstyle{remark}
\def\ogg~{{\rm \og}}   
\def\emptyset{\varnothing}
\def\RR{{\mathbb R}}    
\def\PP{{\mathbb P}}     
\def\EE{{\mathbb E}}    
\def\dt{{\mathrm d}}
\def\Br{{\mathcal{B}_r}}
\def\S{{\mathbb{S}^1}}
\def\SS{{\mathbb{S}}}
\definecolor{qqwuqq}{rgb}{0.,0.39215686274509803,0.}
\definecolor{xdxdff}{rgb}{0.49019607843137253,0.49019607843137253,1.}
\definecolor{ffqqqq}{rgb}{1.,0.,0.}
\definecolor{uuuuuu}{rgb}{0.26666666666666666,0.26666666666666666,0.26666666666666666}
\def\BState{\State\hskip-\ALG@thistlm}
\begin{document}
\title{Explicit speed of convergence of the stochastic billiard in a convex set}

\author{Ninon F\'etique \footnotemark[1]}
\date{ }

\footnotetext[1]{Laboratoire de Math\'ematiques et Physique Th\'eorique (UMR CNRS 7350), F\'ed\' eration Denis Poisson (FR CNRS
2964), Universit\'e Fran\c cois-Rabelais, Parc de Grandmont, 37200 Tours, France. Email: ninon.fetique@lmpt.univ-tours.fr}

\maketitle

\begin{abstract}
In this paper, we are interested in the speed of convergence of the stochastic billiard evolving in a convex set $K$. This process can be described as follows: a particle moves at unit speed inside the set $K$ until it hits the boundary, and is randomly reflected, independently of its position and previous velocity. We focus on convex sets in $\RR^2$ with a curvature bounded from above and below. We give an explicit coupling for both the continuous-time process and the embedded Markov chain of hitting points on the boundary, which leads to an explicit speed of convergence to equilibrium.
\end{abstract}

\begin{flushleft}
\textbf{MSC Classification 2010:} 60J05, 60J25, 60J75, 60F17 . \\ 
\textbf{Key words:} Stochastic billiard, invariant measure, coupling, speed of convergence.
\end{flushleft}

\tableofcontents

\bigskip

\section{Introduction}

In this paper, our goal is to give explicit bounds on the speed of convergence of a process, called "stochastic billiard", towards its invariant measure, under some assumptions that we will detail further. This process can be informally described as follows: a particle moves at unit speed inside a domain until it hits the boundary. At this time, the particle is reflected inside the domain according to a random distribution on the unit sphere, independently on its position and previous velocity.\\
The stochastic billiard is a generalisation of shake-and-bake algorithm (see \cite{Romeijn}), in which the reflection law is the cosine law. In that case, it has been proved that the Markov chain of hitting points on the boundary has a uniform stationary distribution. In \cite{Romeijn}, the shake-and-bake algorithm is introduced for generating uniform points on the boundary of bounded polyhedra. More generally, stochastic billiards can be used for sampling from a bounded set or the boundary of such a set, through the Markov Chain Monte Carlo algorithms. In that sense, it is therefore important to have an idea of the speed of convergence of the process towards its invariant distribution.\medskip\\
Stochastic billiards have been studied a lot, under different assumptions on the domain in which it lives and on the reflection law. Let us mention some of these works. In \cite{Evans}, Evans considers the stochastic billiard with uniform reflection law in a bounded $d$-dimensional region with $C^1$ boundary, and also in polygonal regions in the plane. He proves first the exponentially fast total variation convergence of the Markov chain, and moreover the uniform total variation C\'esaro convergence for the continuous-time process. In \cite{DV}, the authors only consider the stochastic billiard Markov chain, in a bounded convex set with curvature bounded from above and with a cosine distribution for the reflection law. They give a bound for the speed of convergence of this chain towards its invariant measure, that is the uniform distribution on the boundary of the set, in order to get a bound for the number of steps of the Markov chain required to sample approximatively the uniform distribution. Finally, let us mention the work of Comets, Popov, Sch\"utz and Vachkovskaia \cite{CPSV}, in which some ideas have been picked and used in the present paper. They study the convergence of the stochastic billiard and its associated Markov chain in a bounded domain in $\RR^d$ with a boundary locally Lipschitz and almost everywhere $C^1$. They consider the case of a reflection law which is absolutely continuous with respect to the Haar measure on the unit sphere of $\RR^d$, and supported on the whole half-sphere that points into the domain. They show the exponential ergodicity of the Markov chain and the continuous-time process and also their Gaussian fluctuations. The particular case of the cosine reflection law is discussed. Even if they do not give speeds of convergence, their proofs could lead to explicit speeds if we write them in particular cases (as for the stochastic billiard in a disc of $\RR^2$ for instance). However, as we will mention in Section \ref{Subsection A coupling for the stochastic billiard}, the speed of convergence obtained in particular cases will not be relevant, since their proof is adapted to their very general framework, and not for more particular and simple domains. \medskip\\
The goal of this paper is to give explicit bounds on the speed of convergence of the stochastic billiard and its embedded Markov chain towards their invariant measures. For that purpose, we are going to give an explicit coupling of which we can estimate the coupling time.\\
In a first part, we study the particular case of the billiard in a disc. In that case, everything is quite simple since all the quantities can be explicitly expressed. \\
Then, in a second part, we extend the results for the case of the stochastic billiard in a compact convex set of $\RR^2$ with curvature bounded from above and below. In that case, we can no more do explicit computations on the quantities describing the process, since we do not know exactly the geometry of the convex set. However, thanks to the assumptions on the curvature, we are able to estimate the needed quantities.\\
In both cases, the disc and the convex set, we suppose that the reflection law has a density function which is bounded from below by a strictly positive constant on a part of the sphere. The speed of convergence will obviously depend on it. However, for the convergence of the stochastic billiard process in a convex set, we will need to suppose that the reflection law is supported on the whole half sphere that points inside the domain.\\
At the end of this paper, we briefly discuss the extension of the results to higher dimensions. 

\subsection*{Notations}

We introduce some notations used in the paper:
\begin{itemize}
\item for $A\subset \RR$, $\mathbf{1}_A$ denotes the indicator function of the set $A$, that is $\mathbf{1}_A(x)$ is equal to $1$ if $x\in A$ and $0$ otherwise;
\item for $x\in \RR$, $\lfloor x \rfloor$ denotes the floor of the real $x$;
\item for $x,y\in\RR^2$, we note by $\lVert x \lVert$ the euclidean norm of $x$ and we write $\langle x,y \rangle$ for the scalar product  of $x$ and $y$;
\item for $A\subset\RR^2$, $\partial A$ denotes the boundary of the set $A$;
\item $\Br$ denotes the closed ball of $\RR^2$ centred at the origin with radius $r$, i.e. $\Br=\left\{ x\in \RR^2: \lVert x \lVert \leq r \right\}$, and $\S$ denotes the unit sphere of $\RR^2$, i.e. $\S=\left\{ x\in\RR^2: \lVert x \lVert=1 \right\}$;
\item for $\mathcal{I}\subset \RR$, $\lvert \mathcal{I}\lvert$ denotes the Lebesgue measure of the set $\mathcal{I}$;
\item for $K\subset \RR^2$ a compact convex set, we consider the 1-dimensional Hausdorff measure in $\RR^2$ restricted to $\partial K$. Therefore, if $A\subset \partial K$, $\lvert A \lvert$ denotes this Hausdorff measure of $A$; 
\item for $A\subset \RR^2$, if $x\in \partial A$, we write $n_x$ the unitary normal vector of $\partial A$ at $x$ looking into $A$ and we define $\SS_x$ the set of vectors that point the interior of $A$: $\SS_x=\left\{v\in\S: \langle v, n_x\rangle \geq 0  \right\}$;
\item if two random variables $X$ and $Y$ are equal in law we write $X\overset{\mathcal{L}}{=}Y$, and we write $X\sim\mu$ to say that the random variable $X$ has $\mu$ for law;
\item we denote by $\mathcal{G}(p)$ the geometric law with parameter $p$.
\end{itemize}

\section{Coupling for the stochastic billiard}

\subsection{Generalities on coupling}\label{Subsection Generalities on coupling}

In order to describe the way we will prove the exponential convergences and obtain bounds on the speeds of convergence, we first need to introduce some notions.\\
Let $\nu$ and $\overset{\sim}{\nu}$ be two probability measures on a measurable space $E$. We say that a probability measure on $E\times E$ is a coupling of $\nu$ and $\overset{\sim}{\nu}$ if its marginals are $\nu$ and $\overset{\sim}{\nu}$. Denoting by $\Gamma(\nu,\overset{\sim}{\nu})$ the set of all the couplings of $\nu$ and $\overset{\sim}{\nu}$, we say that two random variables $Y$ and $\overset{\sim}{Y}$ satisfy $(Y,\overset{\sim}{Y})\in\Gamma(\nu,\overset{\sim}{\nu})$ if $\nu$ and $\overset{\sim}{\nu}$ are the respective laws of $Y$ and $\overset{\sim}{Y}$. The total variation distance between these two probability measures is then defined by 
\begin{equation*}
\lVert \nu - \overset{\sim}{\nu} \lVert_{TV} = \underset{(Y,\overset{\sim}{Y})\in\Gamma(\nu,\overset{\sim}{\nu})}{\inf}  \PP(Y\neq \overset{\sim}{Y}).
\end{equation*}
For other equivalent definitions of the total variation distance and its properties, see for instance \cite{Lind}.\\
Let $(Y)_{t\geq 0}$ and $(\tilde{Y})_{t\geq 0}$ be two Markov processes and let define $T_c=\inf\left\{t\geq 0: Y_t=\tilde{Y}_t\right\} $ the coupling time of $Y$ and $\tilde{Y}$. From the definition of the total variation distance, it immediately follows that
\begin{equation*}
\lVert \mathcal{L}(Y_t)-\mathcal{L}(\tilde{Y}_t)  \lVert_{TV} \leq  \PP\left( T_c>t \right).
\end{equation*}
Therefore, let $T^*$ be a random variable stochastically bigger than $T_c$, $T_c\leq_{st} T^*$, which means that $\PP\left( T_c\leq t \right) \geq \PP\left( T^*\leq t \right)$ for all $t\geq 0$. If $T^*$ has a finite exponential moment, Markov's inequality gives then, for any $\lambda$ such that the Laplace transform of $T^*$ is well defined:
\begin{equation*}
\lVert \mathcal{L}(Y_t) - \mathcal{L}(\tilde{Y}_t) \lVert_{TV} \leq \PP\left( T^* >t \right) \leq \mathrm{e}^{-\lambda t}\EE\left[ \mathrm{e}^{\lambda T^*} \right].
\end{equation*}
Thus, if we manage to stochastically bound the coupling time of two stochastic billiards by a random time whose Laplace transform can be estimated, we get an exponential bound for the speed of convergence of the stochastic billiard towards its invariant measure.\medskip\\
We end this part with a definition that we will use throughout this paper.
\begin{Defi}
Let $K\subset \RR^2$ be a compact convex set.\\
We say that a pair of random variables $(X,T)$ living in $\partial K\times \RR^+$ is $\alpha$-continuous on the set $A\times B\subset \partial K\times \RR^+$ if for any measurable $A_1\subset A$, $B_1\subset B$:
\[ \PP\left( X\in A_1, T\in B_1\right) \geq \alpha |A_1| |B_1|. \]
\end{Defi}
We can also adapt this definition for a single random variable.

\subsection{Description of the process}\label{Subsection Description of the process}

Let us now give a precise description of the stochastic billiard $(X_t,V_t)_{t\geq 0}$ is a set $K$.\\
We assume that $K\subset \RR^2$ is a compact convex set with a boundary at least $C^1$.\\
Let $e=(1,0)$ be the first coordinate vector of the canonical base of $\RR^2$. We consider a law $\gamma$ on the half-sphere $\SS_e=\{v\in \mathbb{S}^1: e\cdot v\geq 0  \}$. Let moreover $(U_x, x\in \partial K)$ be a family of rotations of $\S$ such that $U_xe=-n_x$, where  we recall that $n_x$ is the normal vector of $\partial K$ at $x$ looking into $K$.\\
Given $x_0\in \partial K$, we consider the process $(X_t,V_t)_{t\geq 0}$ living in $K\times \mathbb{S}^1$ constructed as follows (see Figure \ref{Figure billard}):

\begin{itemize}
 \item Let $X_0=x_0$, and $V_0=U_{X_0}\eta_0$ with $\eta_0$ a random vector chosen according to the distribution $\gamma$.
\item Let $\tau_1=\inf\{t>0: x_0+tV_0\notin K\} $ and define $T_1=\tau_1$. We put $X_t=x_0+tV_0$, $V_t=V_0$ for $t\in [0,T_1)$, and $X_{T_1}=x_0+\tau_1V_0$.\\
Then, let $V_{T_1}=U_{X_{T_1}}\eta_1$ with $\eta_1$ following the law $\gamma$.
\item Let $\tau_2=\inf\{t>0: X_{T_1}+tV_{T_1}\notin K\} $ and define $T_2=T_1+\tau_2$. We put $X_t=X_{T_1}+tV_{T_1}$, $V_t=V_{T_1}$ for $t\in [T_1,T_2)$, and $X_{T_2}=X_{T_1}+\tau_2V_{T_1}$.\\
Then, let $V_{T_2}=U_{X_{T_2}}\eta_2$ with $\eta_2$ following the law $\gamma$.\\
\item And we start again ...
 \end{itemize}
 As mentioned in the introduction $(X_{T_n})_{n\geq 0}$ is a Markov chain living in $\partial K$ and the process $(X_t,V_t)_{t\geq 0}$ is a Markov process living in $K\times \S$.
 
 \definecolor{xdxdff}{rgb}{0.49019607843137253,0.49019607843137253,1.}
\definecolor{ududff}{rgb}{0.30196078431372547,0.30196078431372547,1.}
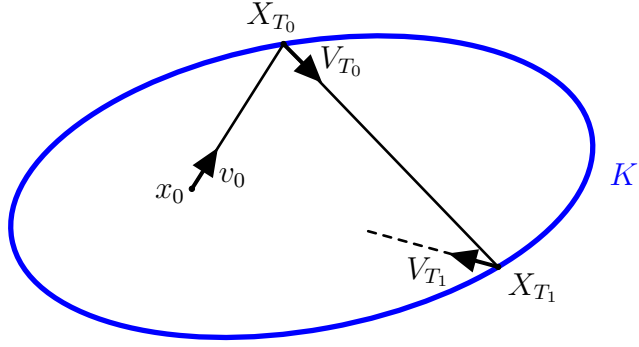
\begin{figure}
\begin{center}
\begin{tikzpicture}[line cap=round,line join=round,>=triangle 45,x=1.0cm,y=1.0cm,scale=1]
\clip(-5,-3) rectangle (4,3.5);

\draw [color=blue,rotate around={-169.6879509628261:(-0.8813058802500431,0.33123032309989775)},line width=2.pt] (-0.8813058802500431,0.33123032309989775) ellipse (3.9170357228136985cm and 1.9101889339569433cm);
\draw [line width=1pt] (-2.3420067194346355,0.30227099552412634)-- (-1.12317761651767,2.228119049044203);
\draw [->,line width=1.5pt] (-2.3420067194346355,0.30227099552412634) -- (-1.995581499186186,0.8496507180045398);
\draw [line width=1.pt] (-1.12317761651767,2.228119049044203)-- (1.7298151282607106,-0.7365751697717041);
\draw [->,line width=1.5pt] (-1.12317761651767,2.228119049044203) -- (-0.609679728621853,1.7123045926973206);
\draw [->,line width=1.5pt] (1.7298151282607106,-0.7365751697717041) -- (1.0689316965843305,-0.5571780541814397);
\draw [line width=1.pt,dashed] (1.7298151282607106,-0.7365751697717041)-- (-0.04566628975257539,-0.2483135519435019);

\draw[color=blue] (3.4,0.5) node {$K$};
\draw [fill=black] (-2.3420067194346355,0.30227099552412634) circle (1pt);
\draw[color=black] (-2.6307278845700983,0.235126538515879) node {$x_0$};
\draw [fill=black] (-1.12317761651767,2.228119049044203) circle (1pt);
\draw[color=black] (-1.2744098530035017,2.585182533804536) node {$X_{T_0}$};
\draw[color=black] (-1.8,0.45) node {$v_0$};
\draw [fill=black] (1.7298151282607106,-0.7365751697717041) circle (1pt);
\draw[color=black] (2.2,-1.0137603618375217) node {$X_{T_1}$};
\draw[color=black] (-0.35,2.) node {$V_{T_0}$};
\draw[color=black] (0.8,-0.8) node {$V_{T_1}$};

\end{tikzpicture}
\caption{A trajectory of the stochastic billiard in a set $K$, starting in the interior of $K$}
\label{Figure billard}
\end{center}
\end{figure}

\begin{Rema}
We can obviously define the continuous-time process starting at any $x_0\in K$, what will in fact often do in this paper. If $x_0\in K\setminus \partial K$, we have to precise also the initial speed $v_0\in \SS^1$, and we can use the same scheme to construct the process.
\end{Rema}

For $x\in\partial K$, it is equivalent to consider the new speed in $\SS_x$ or to consider the angle in $\left[-\frac{\pi}{2},\frac{\pi}{2}\right]$ between this vector speed and the normal vector $n_x$. For $n\geq 1$, we thus denote by $\Theta_n$ the random variable in $\left[-\frac{\pi}{2},\frac{\pi}{2}\right]$ such that $r_{X_{T_n},\Theta_n}(n_{X_{T_n}})\overset{\mathcal{L}}{=}V_{T_n}$, where for $x\in\partial K$ and $\theta\in\RR$, $r_{x,\theta}$ denotes the rotation with center $x$ and angle $\theta$.\\
We make the following assumption on $\gamma$ (see Figure \ref{intervalle J}):\\

\textbf{Assumption $(\mathcal{H})$:}
\begin{quote} 
The law $\gamma$ has a density function $\rho$ with respect to the Haar measure on $\SS_e$, which satisfies: there exist $\mathcal{J}\subset \mathbb{S}_e$ symmetric with respect to $e$ and $\rho_{\min}>0$ such that:
\[\rho(u)\geq \rho_{\min}, ~ \text{ for all } u\in\mathcal{J}.\]
\end{quote}
This assumption is equivalent to the following one on the variables $(\Theta_n)_{n\geq 0}$:\\

\textbf{Assumption $(\mathcal{H}')$:}
\begin{quote} 
The variables $\Theta_n$, $n\geq 0$, have a density function $f$ with respect to the Lebesgue measure on $\left[ -\frac{\pi}{2},\frac{\pi}{2}\right]$ satisfying: there exist $f_{\min}>0$ and $\theta^*\in \left( 0, \frac{\pi}{2}\right)$ such that:
\begin{equation*}
f(\theta)\geq f_{\min}, ~ \text{ for all } \theta\in \left[ -\frac{\theta^*}{2},\frac{\theta^*}{2}\right].
\end{equation*}
\end{quote}

In fact, since these two assumptions are equivalent, we have
\begin{equation*}
\rho_{\min}=f_{\min} ~~ \text{ and } ~~ \vert \mathcal{J} \lvert = \theta^*.
\end{equation*}
In the sequel, we will use both descriptions of the speed vector  depending on which is the most suitable.

\begin{figure}
\begin{center}
\definecolor{qqwuqq}{rgb}{0.,0.39215686274509803,0.}
\definecolor{uuuuuu}{rgb}{0.26666666666666666,0.26666666666666666,0.26666666666666666}
\definecolor{ududff}{rgb}{0.30196078431372547,0.30196078431372547,1.}
\begin{tikzpicture}[line cap=round,line join=round,>=triangle 45,x=1.0cm,y=1.0cm]
\clip(-4.3,-3.08) rectangle (4.3,4);
\draw [shift={(0.,0.)},line width=1.pt] (0,0) -- (86.8035915471075:0.6) arc (86.8035915471075:116.80359154710752:0.6) -- cycle;
\draw [shift={(0.,0.)},line width=1.pt] (0,0) -- (56.8035915471075:0.6) arc (56.8035915471075:86.80359154710749:0.6) -- cycle;
\draw [shift={(0.29203893805309744,5.229380530973452)},line width=2.pt,color=blue]  plot[domain=4.25944071501971:5.147559936171003,variable=\t]({1.*5.2375287568722095*cos(\t r)+0.*5.2375287568722095*sin(\t r)},{0.*5.2375287568722095*cos(\t r)+1.*5.2375287568722095*sin(\t r)});
\draw [dashed,line width=1.pt,domain=-4.3:5] plot(\x,{(-0.--0.29203893805309744*\x)/-5.229380530973452});
\draw [dashed,line width=1.pt,domain=-4.3:0.2] plot(\x,{(-0.-5.229380530973452*\x)/-0.29203893805309744});
\draw [->,line width=1.pt] (0.,0.) -- (0.10175862101003531,1.8221356203939967);
\draw [shift={(0.,0.)},line width=3.pt]  plot[domain=0.9914096994550458:2.0386072506516433,variable=\t]({1.*1.824974804225659*cos(\t r)+0.*1.824974804225659*sin(\t r)},{0.*1.824974804225659*cos(\t r)+1.*1.824974804225659*sin(\t r)});
\draw [dotted,line width=1.pt,domain=-2:0.0] plot(\x,{(-0.--1.6288950469067374*\x)/-0.8229422593482347});
\draw [dotted,line width=1.pt,domain=0.0:2.5] plot(\x,{(-0.--1.5271364258967022*\x)/0.9991933610457617});
\draw [dotted,line width=1.pt] (0.,0.) circle (1.824974804225658cm);

\draw [fill] (0.,0.) circle (2.0pt);
\draw (0.2,-0.3) node {$x$};
\draw (0.3,0.8) node {$\theta^*$};
\draw[color=black] (0.6,2.13) node {$U_x\mathcal{J}$};
\draw[color=blue] (2.7,0.8) node {$\partial K$};
\draw (-0.2,1.2) node {$n_x$};

\end{tikzpicture}
\end{center}
\caption{Illustration of Assumptions $(\mathcal{H})$ and $(\mathcal{A})$ }
\label{intervalle J}
\end{figure}
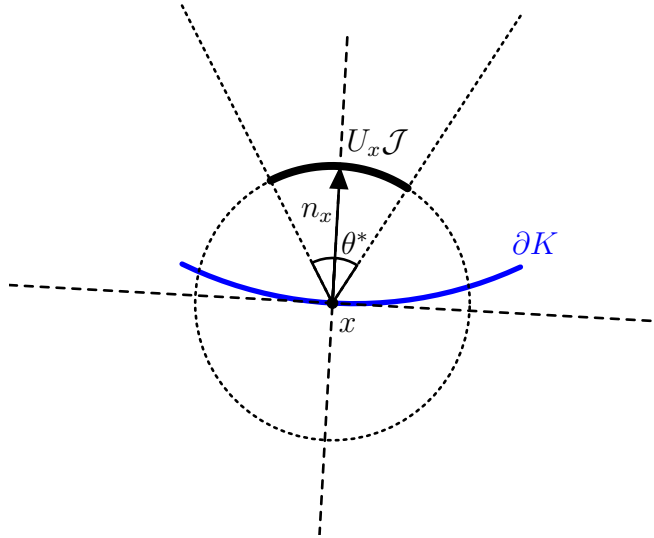

\subsection{A coupling for the stochastic billiard}\label{Subsection A coupling for the stochastic billiard}
Let us now informally describe the idea of the couplings used to explicit the speeds of convergence of our processes to equilibrium. They will be explain explicitly in Sections \ref{Section Stochastic billiard in the disc} and \ref{Section Stochastic billiard in a convex set with bounded curvature}.\\ \\
To get a bound on the speed of convergence of the Markov chain recording the location of hitting points on the boundary of the stochastic billiard, the strategy is the following. We consider two stochastic billiard Markov chains with different initial conditions. We estimate the number of steps that they have to do before they have a strictly positive probability to arrive on the same place at a same step. In particular, it is sufficient to know the number of steps needed before the position of each chain charges the half of the boundary of the set on which they evolve. Then, their coupling time is stochastically smaller than a geometric time whose Laplace transform is known.\\
The case of the continuous-time process is a bit more complicated. To couple two stochastic billiards, it is not sufficient to make them cross in the interior of the set where they live. Indeed, if they cross with a different speed, then they will not be equal after. So the strategy is to make them arrive at the same place on the boundary of the set at the same time, and then they can always keep the same velocity and stay equal. We will do this in two steps. First, we will make the two processes hit the boundary at the same time, but not necessary at the same point. This will take some random time, that we will be able to quantify. And secondly, with some strictly positive probability, after two bounces, the two processes will have hit the boundary at the same point at the same time. We repeat the scheme until the first success. This leads us to a stochastic upper bound for the coupling time of two stochastic billiards.\medskip\\
Obviously, the way that we couple our processes is only one way to do that, and there are many as we want. Let us for instance describe the coupling constructed in \cite{CPSV}. Consider two stochastic billiard processes evolving in the set $K$ with different initial conditions. Their first step is to make the processes hit the boundary in the neighbour of a good $x_1\in \partial K$. This can be done after $n_0$ bounces, where $n_0$ is the minimum number of bounces needed to connect any two points of the boundary of $K$. Once the two processes have succeed, they are in the neighbour of $x_1$, but at different times. Then, the strategy used by the authors of \cite{CPSV} is to make the two processes do goings and comings between the neighbour of $x_1$ and the neighbour of another good $y_1\in \partial K$. Thereby, if the point $y_1$ is well chosen, the time difference between the two processes decreases gradually, while the positions of the processes stay the same after one going and coming. However, the number of goings and comings needed to compensate for the possibly big difference of times could be very high. This particular coupling is therefore well adapted for sets whose boundary can be quite "chaotic", but not for convex sets with smooth boundary as we consider in this paper.

\section{Stochastic billiard in the disc}\label{Section Stochastic billiard in the disc}

In this section, we consider the particular case where $K$ is a ball: $K=\Br$, for some fixed $r>0$.\\
In that case, for each $n\geq 0$, the couple $(X_{T_n},V_{T_n})\in \partial \Br\times\S$ can be represented by a couple $(\Phi_n,\Theta_n)\in [0,2\pi)\times \left[-\frac{\pi}{2},\frac{\pi}{2}\right]$ as follows (see Figure \ref{dessincorrespondancecercle}):
\begin{itemize}
\item to a position $x$ on $\partial\Br$ corresponds an unique angle $\phi\in [0,2\pi)$. The variable $\Phi_n$ nominates this unique angle associated to $X_n$, i.e. $(1,\Phi_n)$ are the polar coordinate of $X_n$.
\item at each speed $V_{T_n}$ we associate the variable $\Theta_n$ introduced in Section \ref{Subsection Description of the process}, satisfying Assumption $(\mathcal{H}')$.
\end{itemize}
Remark that for all $n\geq 0$, the random variable $\Theta_n$ is independent of $\Phi_k$ for all $k\in\{0,n\}$. We also recall that the variables $\Theta_n$, $n\geq 0$, are all independent.

\begin{figure}
\begin{center}
\begin{tikzpicture}[line cap=round,line join=round,>=triangle 45,x=1.0cm,y=1.0cm,scale=0.8]
\draw[->,color=black] (-4,0.) -- (4,0.);
\foreach \x in {-6.,-5.,-4.,-3.,-2.,-1.,1.,2.,3.,4.,5.,6.,7.,8.}
\draw[shift={(\x,0)},color=black] (0pt,-2pt);
\draw[->,color=black] (0.,-4) -- (0.,4);
\foreach \y in {-5.,-4.,-3.,-2.,-1.,1.,2.,3.,4.}
\draw[shift={(0,\y)},color=black] (-2pt,0pt);
\clip(-5,-5) rectangle (5,5);
\draw [line width=1.5pt] (0.:0.6) arc (0.:141.6942291318932:0.6) ;
\draw [shift={(-2.3541418260183855,1.85957421551032)},line width=1.5pt]  (-78.58430135864899:0.6) arc (-78.58430135864899:-38.30577086810682:0.6);
\draw [line width=2.pt] (0.,0.) circle (3.cm);
\draw [->,line width=1pt] (-2.3541418260183855,1.85957421551032) -- (-2.12,0.7);
\draw [dashed,line width=1.pt] (-2.3541418260183855,1.85957421551032)-- (0.,0.);
\draw [fill] (-2.3541418260183855,1.85957421551032) circle (2.5pt);
\draw (-2.8,2.) node {$X_{T_n}$};
\draw (-2.45,0.7) node {$V_{T_n}$};
\draw  (0.6,0.7) node {$\Phi_n$};
\draw (-1.64,1) node {$\Theta_n$};
\end{tikzpicture}
\end{center}
\caption{Definition of the variables $\Phi_n$ and $\Theta_n$ in bijection with the variables $X_{T_n}$ and $V_{T_n}$}
\label{dessincorrespondancecercle}
\end{figure}
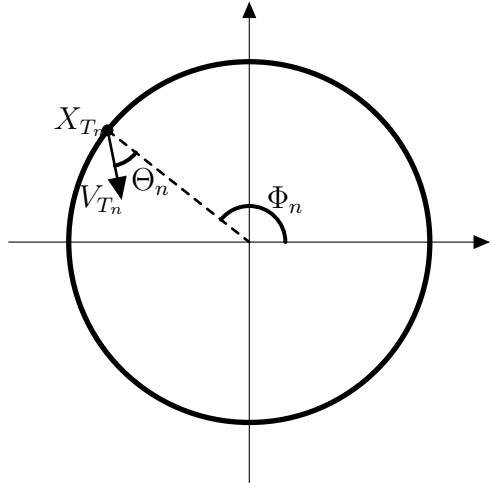
\medskip
In the sequel, we do not care about the congruence modulo $2\pi$ : it is implicit that when we write $\Phi$, we consider its representative in $[0,2\pi)$. \\
 
Let us state the following proposition that links the different random variables together.

\begin{Prop}\label{Prop lien entre les variables dans le cercle}
For all $n\geq 1$ we have:
\begin{equation}\label{Equation qui lie Phi_n, Theta_n, tau_n}
\tau_n=2r\cos(\Theta_{n-1}) ~~~~ \text{and} ~~~~ \Phi_n=\pi+2\Theta_{n-1}+\Phi_{n-1}
\end{equation}
\end{Prop}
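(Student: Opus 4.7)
The plan is to reduce to a canonical configuration using the rotational symmetry of the disc and then carry out a direct coordinate computation. Since the whole construction is invariant under rotation about the origin, and both identities in \eqref{Equation qui lie Phi_n, Theta_n, tau_n} are invariant under a global rotation (the $\Theta_k$'s and distances are unchanged while all $\Phi_k$'s shift by the same angle), I will assume without loss of generality that $\Phi_{n-1}=0$, so $X_{T_{n-1}}=(r,0)$ and the inward normal is $n_{X_{T_{n-1}}}=(-1,0)$. By the definition of $\Theta_{n-1}$ via the rotation $r_{X_{T_{n-1}},\Theta_{n-1}}$, the velocity at time $T_{n-1}$ is then $V_{T_{n-1}}=(-\cos\Theta_{n-1},-\sin\Theta_{n-1})$.

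For the first identity, $\tau_n$ is the unique positive root of $\|X_{T_{n-1}}+tV_{T_{n-1}}\|^2=r^2$. Expanding and using $\|X_{T_{n-1}}\|=r$ and $\|V_{T_{n-1}}\|=1$ yields the quadratic $t^2+2t\langle X_{T_{n-1}},V_{T_{n-1}}\rangle=0$, whose nonzero root is $\tau_n=-2\langle X_{T_{n-1}},V_{T_{n-1}}\rangle=2r\cos\Theta_{n-1}$. Plugging this value of $\tau_n$ back into the trajectory gives $X_{T_n}=(r-2r\cos^2\Theta_{n-1},-2r\cos\Theta_{n-1}\sin\Theta_{n-1})=(-r\cos 2\Theta_{n-1},-r\sin 2\Theta_{n-1})$, whose polar angle is $\pi+2\Theta_{n-1}$ modulo $2\pi$. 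Undoing the rotational reduction restores the $\Phi_{n-1}$ offset and gives $\Phi_n=\pi+2\Theta_{n-1}+\Phi_{n-1}$.

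There is really no serious obstacle; the only care needed is to verify that the orientation conventions in the definition of $\Theta_{n-1}$ via $r_{x,\theta}$ are consistent with the polar-coordinate convention used for $\Phi_n$, so that the sign of $\Theta_{n-1}$ ends up with the correct sign in $\pi+2\Theta_{n-1}$. Alternatively one could argue synthetically: the triangle $O X_{T_{n-1}} X_{T_n}$ is isoceles with legs of length $r$, and its base angle at $X_{T_{n-1}}$ equals $|\Theta_{n-1}|$ because the leg $X_{T_{n-1}}O$ is directed along $n_{X_{T_{n-1}}}$; this yields a base length $2r\cos\Theta_{n-1}=\tau_n$ and a central angle $\pi-2|\Theta_{n-1}|$, which together with the sign information from $\Theta_{n-1}$ produces the claimed angular displacement of $\Phi_n$ relative to $\Phi_{n-1}$.
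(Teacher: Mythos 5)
Your proof is correct: the reduction to $\Phi_{n-1}=0$ by rotational symmetry, the quadratic for the exit time giving $\tau_n=-2\langle X_{T_{n-1}},V_{T_{n-1}}\rangle=2r\cos\Theta_{n-1}$, and the identification of the polar angle of $(-r\cos 2\Theta_{n-1},-r\sin 2\Theta_{n-1})$ as $\pi+2\Theta_{n-1}$ all check out. The paper simply asserts the relations are "immediate with geometric considerations" (essentially your synthetic isoceles-triangle remark), so your coordinate computation is a faithful and complete filling-in of the omitted argument rather than a different route.
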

\begin{proof}
The relationships are immediate with geometric considerations.
\end{proof}

\subsection{The embedded Markov chain}

In this section, the goal is to obtain a control of the speed of convergence of the stochastic billiard Markov chain on the circle. For this purpose, we study the distribution of the position of the Markov chain at each step.\\

Let $\Phi_0=\phi_0\in [0,2\pi)$.

\begin{Prop}\label{Prop densite des phi_n}
Let $(\Phi_n)_{n\geq 0}$ be the stochastic billiard Markov chain evolving on $\partial B_r$, satisfying assumption $(\mathcal{H}')$.\\
We have
\begin{equation*}
f_{\Phi_1}(u)\geq \frac{f_{\min}}{2},~~~ \forall u\in \mathcal{I}_1=\left[\pi-\theta^*+\phi_0,\pi+\theta^*+\phi_0\right].
\end{equation*}
Moreover, for all $n\geq 2$, for all $\eta_2,\cdots,\eta_n$ such that $\eta_2\in\left( 0 ,2\theta^*\right)$, and for $k\in\{2,\cdots,n-1\}$, $\eta_{k+1}\in \left(0,n\theta^* - \sum_{k=2}^{n-1} \eta_k \right)$, we have
\begin{multline*}
f_{\Phi_n}(u)\geq \left(\frac{f_{\min}}{2}\right)^{n}\eta_n\cdots \eta_2,\\ ~~~ \forall u\in \mathcal{I}_n=\left[(n-1)\pi-n\theta^*+\phi_0 + \sum_{k=2}^{n}\eta_k,(n-1)\pi+n\theta^*+\phi_0-\sum_{k=2}^{n}\eta_k \right].
\end{multline*}
\end{Prop}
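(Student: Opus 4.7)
The plan is to induct on $n$, exploiting the recurrence $\Phi_n = \pi + 2\Theta_{n-1} + \Phi_{n-1}$ from Proposition~\ref{Prop lien entre les variables dans le cercle} together with the independence of $\Theta_{n-1}$ from $\Phi_{n-1}$ (the latter is a measurable function of the earlier i.i.d.\ angles $\Theta_0,\dots,\Theta_{n-2}$, which are independent of $\Theta_{n-1}$). For the base case $n=1$, the map $\theta\mapsto \pi + 2\theta + \phi_0$ is an affine change of variable, hence $f_{\Phi_1}(u) = \tfrac12 f\!\bigl(\tfrac{u-\pi-\phi_0}{2}\bigr)$, and the lower bound from assumption $(\mathcal{H}')$ immediately yields $f_{\Phi_1}(u)\geq f_{\min}/2$ on $\mathcal{I}_1$.

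For the induction step I condition on $\Theta_{n-1}$, writing
\[
f_{\Phi_n}(u) \;=\; \int_{-\pi/2}^{\pi/2} f_{\Phi_{n-1}}(u-\pi-2\theta)\, f(\theta)\,d\theta,
\]
and restrict the integration to the set
\[
A_n(u) \;:=\; \bigl\{\theta\in[-\theta^*/2,\theta^*/2]:\; u-\pi-2\theta \in \mathcal{I}_{n-1}\bigr\},
\]
on which both factors are controlled. The induction hypothesis gives $f_{\Phi_{n-1}}(u-\pi-2\theta) \geq (f_{\min}/2)^{n-1}\eta_{n-1}\cdots\eta_2$ there, and $(\mathcal{H}')$ gives $f(\theta)\geq f_{\min}$, so
\[
f_{\Phi_n}(u) \;\geq\; \Bigl(\tfrac{f_{\min}}{2}\Bigr)^{n-1}\!\eta_{n-1}\cdots\eta_2 \cdot f_{\min} \cdot |A_n(u)|.
\]
The proposition thus reduces to the geometric estimate $|A_n(u)|\geq \eta_n/2$ for every $u\in\mathcal{I}_n$; combining yields the announced constant $(f_{\min}/2)^n\eta_n\cdots\eta_2$.

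The bound on $|A_n(u)|$ is an explicit interval-intersection computation. Letting $w_{n-1}$ denote the half-width of $\mathcal{I}_{n-1}$, the constraint $u-\pi-2\theta\in\mathcal{I}_{n-1}$ places $\theta$ in an interval of half-width $w_{n-1}/2$ whose centre depends affinely on $u$; intersecting with $[-\theta^*/2,\theta^*/2]$ and tracking the binding endpoint as $u$ ranges over $\mathcal{I}_n$ shows that the minimum intersection measure is attained at $u$ at the extremes of $\mathcal{I}_n$ and equals exactly $\eta_n/2$. The width calibration built into the definition of $\mathcal{I}_n$, namely $w_n = w_{n-1} + \theta^* - \eta_n$, is precisely what makes this threshold propagate cleanly through the induction.

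The main obstacle is this bookkeeping: nothing is conceptually difficult, but one must distinguish the subcase where the $\theta$-pullback interval is contained in $[-\theta^*/2,\theta^*/2]$ from the subcase where only one endpoint is clipped, and verify that the worst case occurs at $u$ at the boundary of $\mathcal{I}_n$ so that the loss at each step is exactly the factor $\eta_n/2$ rather than something worse.
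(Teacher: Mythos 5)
Your proof is correct and follows essentially the same route as the paper: induction on $n$ via the recurrence $\Phi_n=\pi+2\Theta_{n-1}+\Phi_{n-1}$, using independence to write the density of $\Phi_n$ as a convolution, and reducing to the fact that the pullback set of $\theta$'s landing in $\mathcal{I}_{n-1}$ intersected with $\left[-\frac{\theta^*}{2},\frac{\theta^*}{2}\right]$ has measure at least $\frac{\eta_n}{2}$ for $u\in\mathcal{I}_n$, the worst case being at the endpoints. The paper phrases the same computation by testing against bounded functions $g$ with a substitution and Fubini rather than writing the convolution directly, but the argument is identical.
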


\begin{proof}
Since the Markov chain is totally symmetric, we do the computations with $\phi_0=0$.
\begin{itemize}
\item Case $n=2$:\\
We have, thanks to \eqref{Equation qui lie Phi_n, Theta_n, tau_n}, $\Phi_1=\pi+2\Theta_0+\phi_0=\pi+2\Theta_0$. Thus, for any measurable bounded function $g$, we get:
\begin{align*}
\EE\left[g(\Phi_1) \right] &= \EE\left[g\left( \pi+2\Theta_0\right)\right] = \int_{-\frac{\pi}{2}}^{\frac{\pi}{2}} g\left( \pi+2x\right)f(x)\dt x\\
&\geq f_{\min}\int_{-\frac{\theta^*}{2}}^{\frac{\theta^*}{2}} g\left( \pi+2x\right)\dt x =\frac{f_{\min}}{2}\int_{\pi-\theta^*}^{\pi+\theta^*} g(u) \dt u.
\end{align*}
We deduce:
\begin{equation*}
f_{\Phi_1}(u)\geq \frac{f_{\min}}{2},~~~ \forall u\in \left[\pi-\theta^*,\pi+\theta^*\right].
\end{equation*}

\item Induction: let suppose that for some $n\geq 1$, $f_{\Phi_n}(u)\geq c_n$ for all $u\in \left[a_n,b_n \right]$. Then, using the relationship \eqref{Equation qui lie Phi_n, Theta_n, tau_n} and the independence between $\Theta_n$ and $\Phi_n$ we have, for any measurable bounded function $g$:
\begin{align*}
\EE\left[ g(\Phi_{n+1}) \right] &= \EE\left[ g(\pi+2\Theta_n+ \Phi_n)\right]\\
&\geq f_{\min}c_n\int_{-\frac{\theta^*}{2}}^{\frac{\theta^*}{2}} \int_{a_n}^{b_n} g(\pi+2\theta+x)\dt x \dt\theta.
\end{align*}
Using the substitution $u=\pi+2\theta+x$ in the integral with respect to $x$ and Fubini's theorem, we have:
\begin{align*}
\EE\left[ g(\Phi_{n+1}) \right] &\geq f_{\min}c_n \int_{\pi-\theta^*+a_n}^{\pi+\theta^*+b_n} \left(\int_{-\frac{\theta^*}{2}}^{\frac{\theta^*}{2}} \mathbf{1}_{\frac{1}{2}\left(u-\pi-b_n\right)\leq \theta\leq \frac{1}{2}\left(u-\pi-a_n\right)} \right) g(u)\dt u,
\end{align*}
and we deduce the following lower bound of the density function $f_{\Phi_{n+1}}$ of $\Phi_{n+1}$:
\begin{align*}
f_{\Phi_{n+1}}(u)\geq f_{\min}c_n\left\lvert \left[-\frac{\theta^*}{2},\frac{\theta^*}{2}\right] \cap \left[ \frac{1}{2}\left(u-\pi-b_n\right),\frac{1}{2}\left(u-\pi-a_n\right)\right]\right\lvert,
\end{align*}
for all $u\in \left[\pi-\theta^*+a_n,\pi+\theta^*+b_n\right]$.\\
When $u$ is equal to one extremal point of this interval, this lower bound is equal to $0$. However, let $\eta_{n+1}\in\left(0,\frac{1}{2}(b_n-a_n) \right)$, we have, for $u\in \left[\pi-\theta^*+a_n+\eta_{n+1},\pi+\theta^*+b_n-\eta_{n+1}\right]$:
\begin{align*}
f_{\Phi_{n+1}}(u)\geq f_{\min}c_n\frac{\eta_{n+1}}{2}.
\end{align*}

\end{itemize}
The results follows immediately.
\end{proof}

By choosing a constant sequence for the $\eta_k$, $k\geq 2$ in the Proposition \ref{Prop densite des phi_n}, we immediately deduce:

\begin{Coro}\label{Cor densite des Phi_n, intervalles J}
For all $n\geq 2$, for all $\varepsilon\in\left( 0,\theta^*\right)$, we have
\begin{equation*}
f_{\Phi_n}(u)\geq \left(\frac{f_{\min}}{2}\right)^{n}\varepsilon^{n-1},
\end{equation*}
$\forall u\in \mathcal{J}_n=\left[(n-1)\pi-n\theta^*+\phi_0 + (n-1)\varepsilon,(n-1)\pi+n\theta^*+\phi_0-(n-1)\varepsilon \right]$.
\end{Coro}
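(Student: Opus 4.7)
The plan is to apply Proposition \ref{Prop densite des phi_n} directly with the constant choice $\eta_2 = \eta_3 = \cdots = \eta_n = \varepsilon$, so almost all of the work is already done. First I would verify that this constant choice is admissible in the hypotheses of Proposition \ref{Prop densite des phi_n}. The condition $\eta_2 \in (0, 2\theta^*)$ is immediate since $\varepsilon \in (0,\theta^*) \subset (0, 2\theta^*)$. For the inductive constraints on $\eta_{k+1}$, substituting the constant choice reduces them to inequalities of the form $\varepsilon < n\theta^* - (k-1)\varepsilon$ for $k \in \{2,\ldots,n-1\}$, i.e. $k\varepsilon < n\theta^*$, which is clear from $k \leq n-1 < n$ and $\varepsilon < \theta^*$.

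Next I would read off the two outputs of Proposition \ref{Prop densite des phi_n} under this choice. The product giving the density lower bound becomes
\[
\eta_n \cdots \eta_2 = \varepsilon^{n-1},
\]
so the lower bound on $f_{\Phi_n}$ becomes $(f_{\min}/2)^n \varepsilon^{n-1}$. The sum defining the endpoints of $\mathcal{I}_n$ becomes
\[
\sum_{k=2}^n \eta_k = (n-1)\varepsilon,
\]
which turns $\mathcal{I}_n$ into precisely the interval $\mathcal{J}_n$ in the statement.

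Since the corollary is a literal specialization of Proposition \ref{Prop densite des phi_n}, there is no real obstacle: the only step that requires any thought is the admissibility check, and it is elementary. The proof will therefore be a single short paragraph noting the constant choice of the $\eta_k$'s and invoking Proposition \ref{Prop densite des phi_n}.
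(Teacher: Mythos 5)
Your proposal is correct and is exactly the paper's own argument: the corollary is stated as an immediate consequence of Proposition \ref{Prop densite des phi_n} obtained by taking the constant sequence $\eta_k=\varepsilon$. Your additional check that this constant choice satisfies the admissibility constraints (which reduces to $\varepsilon<\theta^*$) is a worthwhile detail the paper leaves implicit.
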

Let $(\mathcal{J}_n)_{n\geq 2}$ defined as in Corollary \ref{Cor densite des Phi_n, intervalles J}. We put $\mathcal{J}_1=\mathcal{I}_1$ with $\mathcal{I}_1$ defined in Proposition \ref{Prop densite des phi_n}.\\ 

\begin{Theo}
Let $(\Phi_n)_{n\geq 0}$ be the stochastic billiard Markov chain on the circle $\partial \Br$, satisfying assumption $(\mathcal{H}')$. \\
There exists a unique invariant probability measure $\nu$ on $[0,2\pi)$ for the Markov chain $(\Phi_n)_{n\geq 0}$, and we have:
\begin{enumerate}
\item if $\theta^*>\frac{\pi}{2}$, for all $n\geq 0$,
\begin{equation*}
\lVert \PP\left( \Phi_n\in \cdot\right) - \nu \lVert_{TV}\leq \left(1-f_{\min}(2\theta^*-\pi)\right)^{n-1},
\end{equation*}
\item  if $\theta^*\leq \frac{\pi}{2}$, for all $n\geq 0$ and all $\varepsilon\in \left(0,\theta^*\right)$,
\begin{equation*}
\lVert \PP\left( \Phi_n\in \cdot\right) - \nu \lVert_{TV}\leq \left(1-\alpha\right)^{\frac{n}{n_0}-1},
\end{equation*}
where
\begin{equation*}
n_0=\left\lfloor \frac{\pi-2\varepsilon}{2(\theta^*-\varepsilon)}\right\rfloor +1 ~~~ \text{and} ~~~ \alpha=\left(\frac{\varepsilon}{2}\right)^{n_0-1}{f_{\min}}^{n_0}\left( 2n_0\theta^*-2(n_0-1)\varepsilon -\pi\right).
\end{equation*}
\end{enumerate}

\end{Theo}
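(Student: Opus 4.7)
The plan is to construct a Markovian coupling of $(\Phi_n)$ started from $\phi_0$ with a second copy $(\tilde{\Phi}_n)$ started from the (yet-to-be-established) invariant measure $\nu$, and to apply the coupling inequality $\lVert \mathcal{L}(\Phi_n) - \nu \rVert_{TV} \leq \PP(T_c > n)$. The key input is the lower bound on the conditional transition density already packaged in Proposition \ref{Prop densite des phi_n} (one step) and Corollary \ref{Cor densite des Phi_n, intervalles J} ($n_0$ steps), together with the rotational invariance of the disc, which makes the arcs on which these bounds hold mere translates of one another, depending only on the current position.

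In case 1 ($\theta^* > \frac{\pi}{2}$), I would at each step $k \geq 1$ apply, conditionally on $(\Phi_k, \tilde{\Phi}_k)$, a maximal coupling of $\Phi_{k+1}$ and $\tilde{\Phi}_{k+1}$. By the Markov property and Proposition \ref{Prop densite des phi_n}, these two conditional laws each carry a density bounded below by $f_{\min}/2$ on an arc of angular length $2\theta^*$ centred at $\pi + \Phi_k$ and $\pi + \tilde{\Phi}_k$ respectively. Two arcs of length $2\theta^*$ on a circle of circumference $2\pi$ necessarily intersect in a set of Lebesgue measure at least $4\theta^* - 2\pi$, so the conditional one-step coupling probability is at least $(f_{\min}/2)(4\theta^* - 2\pi) = f_{\min}(2\theta^* - \pi)$. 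Thus $T_c$ is stochastically dominated by a geometric random variable with parameter $f_{\min}(2\theta^* - \pi)$, which yields the claimed bound.

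Case 2 ($\theta^* \leq \frac{\pi}{2}$) proceeds in the same spirit, except that a single step is no longer enough to produce arcs of length exceeding $\pi$; I therefore group steps in blocks of size $n_0$. The choice $n_0 = \lfloor (\pi - 2\varepsilon)/(2(\theta^* - \varepsilon)) \rfloor + 1$ is precisely the smallest integer for which $2n_0\theta^* - 2(n_0 - 1)\varepsilon > \pi$, so by Corollary \ref{Cor densite des Phi_n, intervalles J} the conditional density of $\Phi_{(k+1)n_0}$ given $\Phi_{kn_0}$ is at least $(f_{\min}/2)^{n_0}\varepsilon^{n_0 - 1}$ on an arc of that length, and similarly for $\tilde{\Phi}$. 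The same overlap computation then produces a per-block coupling probability bounded below by
\[
\left(\frac{f_{\min}}{2}\right)^{n_0}\varepsilon^{n_0 - 1} \cdot 2\bigl(2n_0\theta^* - 2(n_0 - 1)\varepsilon - \pi\bigr) = \alpha,
\]
and iterating over $\lfloor n/n_0 \rfloor$ blocks produces the stated geometric decay.

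The main obstacle I expect is verifying that the density lower bounds of Proposition \ref{Prop densite des phi_n} and Corollary \ref{Cor densite des Phi_n, intervalles J}, stated for a chain starting from a deterministic point, do pass via the Markov property to conditional bounds valid jointly for both chains regardless of their current positions. This essentially relies on the rotational invariance of $\gamma$ acting on $\mathbb{S}^1$, which also ensures that the overlap computation is insensitive to the relative position of $\Phi_k$ and $\tilde{\Phi}_k$. Existence and uniqueness of $\nu$ will then be a free consequence of the uniform minorization thereby obtained, via a standard Doeblin--Harris argument.
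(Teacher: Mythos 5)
Your proposal is correct and follows essentially the same route as the paper: both arguments rest on the $n_0$-step density lower bound of Corollary \ref{Cor densite des Phi_n, intervalles J}, the observation that two arcs of length exceeding $\pi$ on a circle of circumference $2\pi$ must overlap in measure at least twice the excess, and iteration of the resulting block-wise coupling probability $\alpha$ to get geometric decay. The only cosmetic differences are that the paper couples two copies from arbitrary deterministic starting points (obtaining uniqueness from the uniform bound and existence from compactness) rather than starting one copy from $\nu$, and that the rotational-invariance point you flag as a potential obstacle is exactly what the paper invokes when it reduces to $\phi_0=0$.
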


\begin{proof}
The existence of the invariant measure  is immediate thanks to the compactness of $\partial \Br$ (see \cite{EK}). The following proof leads to its uniqueness and the speed of convergence.\\
Let $(\Phi_n,\Theta_n)_{n\geq 0}$ and $(\tilde{\Phi}_n,\tilde{\Theta}_n)_{n\geq 0}$ be two versions of the process described above, with initial positions $\phi_0$ and $\tilde{\phi}_0$ on $\partial \Br$.\\
In order to couple $\Phi_n$ and $\tilde{\Phi}_n$ at some time $n$, it is sufficient to show that the intervals $\mathcal{J}_n$ and $\tilde{\mathcal{J}}_n$ corresponding to Corollary \ref{Cor densite des Phi_n, intervalles J} have a non empty intersection. Since these intervals are included in $[0,2\pi)$, a sufficient condition to have $\mathcal{J}_n \cap \tilde{\mathcal{J}}_n \neq \emptyset$ is that the length of these two intervals is strictly bigger than $\pi$.\medskip\\
Let $\varepsilon\in \left( 0, \theta^*\right)$. We have
\begin{equation*}
\lvert \mathcal{J}_1  \lvert= \lvert \tilde{\mathcal{J}}_1 \lvert = 2\theta^*,
\end{equation*}
and for $n\geq 2$,
\begin{equation*}
\lvert \mathcal{J}_n \lvert = \lvert \tilde{\mathcal{J}}_n \lvert = 2n\theta^*-2(n-1)\varepsilon.
\end{equation*}
Therefore the length of $\mathcal{J}_n$ is a strictly increasing function of $n$ (which in intuitively clear).
\begin{itemize}
\item Case 1: $\theta^*>\frac{\pi}{2}$. In that case we have $\lvert \mathcal{J}_1  \lvert= \lvert \tilde{\mathcal{J}}_1 \lvert >\pi$. Therefore we can construct a coupling $\left(\Phi_1,\tilde{\Phi}_1\right)$ such that we have, using Proposition \ref{Prop densite des phi_n}:
\begin{align*}
\PP\left( \Phi_1=\tilde{\Phi}_1\right) &\geq \frac{f_{\min}}{2}\left\lvert \mathcal{J}_1\cap \tilde{\mathcal{J}}_1 \right\lvert\\
&\geq\frac{f_{\min}}{2}2(2\theta^*-\pi)\\
&= f_{\min}(2\theta^*-\pi).
\end{align*}
\item Case 2: $\theta^*\leq \frac{\pi}{2}$. Here we need more jumps before having a positive probability to couple $\Phi_n$ and $\tilde{\Phi}_n$. Let thus define 
\begin{equation*}
n_0=\min\{n\geq 2: 2n\theta^*-2(n-1)\varepsilon >\pi\}=\left\lfloor \frac{\pi-2\varepsilon}{2(\theta^*-\varepsilon)}\right\rfloor +1.
\end{equation*}
Using the lower bound of the density function of $\Phi_{n_0}$ obtained in Corollary \ref{Cor densite des Phi_n, intervalles J}, we deduce that we can construct a coupling $\left( \Phi_{n_0},\tilde{\Phi}_{n_0}\right)$ such that:
\begin{align*}
\PP\left( \Phi_{n_0}=\tilde{\Phi}_{n_0} \right) &\geq \left( \frac{f_{\min}}{2}\right)^{n_0}\varepsilon^{n_0-1} \left\lvert \mathcal{J}_{n_0} \cap \tilde{\mathcal{J}}_{n_0} \right\lvert \\
&\geq \left( \frac{f_{\min}}{2}\right)^{n_0}\varepsilon^{n_0-1} 2\left( 2n_0\theta^*-2(n_0-1)\varepsilon -\pi\right)\\
&= \left(\frac{\varepsilon}{2}\right)^{n_0-1}\left(f_{\min}\right)^{n_0}\left(  2n_0\theta^*-2(n_0-1)\varepsilon-\pi\right).
\end{align*}
\end{itemize}
To treat both cases together, let define
\begin{equation*}
m_0= \mathbf{1}_{\theta^*>\frac{\pi}{2}}+ \left(\left\lfloor \frac{\pi-2\varepsilon}{2(\theta^*-\varepsilon)}\right\rfloor +1\right) \mathbf{1}_{\theta^*\leq \frac{\pi}{2}}.
\end{equation*}
and
\begin{equation*}
\alpha = f_{\min}(2\theta^*-\pi) \mathbf{1}_{\theta^*>\frac{\pi}{2}}+ \left(\frac{\varepsilon}{2}\right)^{m_0-1}(f_{\min})^{m_0}\left( 2m_0\theta^*-2(m_0-1)\varepsilon -\pi\right)\mathbf{1}_{\theta^*\leq \frac{\pi}{2}}.
\end{equation*}
We get:
\begin{align*}
\lVert \PP\left( \Phi_n\in \cdot\right) - \nu \lVert_{TV}&\leq \PP\left( \Phi_n \neq \tilde{\Phi}_n \right)\\
&\leq \PP\left( \Phi_{\lfloor \frac{n}{m_0}\rfloor m_0} \neq \tilde{\Phi}_{\lfloor \frac{n}{m_0}\rfloor m_0} \right)\\
&\leq \left(1-\alpha\right)^{\lfloor \frac{n}{m_0}\rfloor}\\
&\leq \left(1-\alpha\right)^{\frac{n}{m_0}-1}.
\end{align*}

\end{proof}

\subsection{The continuous-time process}

We assume here that the constant $\theta^*$ introduced in Assumption $(\mathcal{H}')$ satisfies
\begin{equation*}
\theta^*\in \left( \frac{2\pi}{3},\pi\right).
\end{equation*}
This condition on $\theta^*$ is essential in the proof of Theorem \ref{Thm convergence billiard avec theta*} to couple our processes with "two jumps". However, if $\theta^*\in\left(0,\frac{2\pi}{3}\right]$ we can adapt our method (see Remark \ref{Rem theta plus petit que 2pi/3}).\\

\textbf{Notation}: Let $x\in\partial \Br$. We write $T_n^x$ an $\Phi_n^x$ respectively for the hitting time of $\partial \Br$ and the position of the Markov chain after $n$ steps, and that started at position $x$.\\
Let us remark that the distribution of $T_n^x$ does not depend on $x$ since we consider here the stochastic billiard in the disc, which is rotationally symmetric. Therefore, we allow us to omit this $x$ when it is not necessary for the comprehension.

\begin{Prop}\label{Prop densite T2}
Let $(X_t,V_t)_{t\geq 0}$ be the stochastic billiard process in the ball $\Br$ satisfying Assumption $(\mathcal{H}')$ with $\theta^*\in\left( \frac{2\pi}{3},\pi\right)$.\\
We denote by $f_{T_2}$ the density function of $T_2$. Let $\eta\in \left(0,2r\left(1-\cos\left(\frac{\theta^*}{2}\right) \right)\right)$. We have
\[ f_{T_2}(x)\geq \delta ~~ \text{ for all } x\in [4r\cos\left(\frac{\theta^*}{2}\right)+\eta, 4r-\eta], \]
where
\begin{align}\label{minoration de fT_2 dans le cercle}
 \delta&= \frac{2f_{\min}^2}{r\sin\left(\frac{\theta^*}{2}\right)}\min \left\{   \frac{\theta^*}{2}-\arccos\left(
 \cos\left(\frac{\theta^*}{2}\right)+\frac{\eta}{2r} \right);\arccos\left(1-\frac{\eta}{2r} \right) \right\}.
\end{align}
\end{Prop}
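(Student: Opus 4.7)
The strategy is to use Proposition \ref{Prop lien entre les variables dans le cercle}, which gives $T_2 = 2r\cos\Theta_0 + 2r\cos\Theta_1$, and then exploit Assumption $(\mathcal{H}')$ together with two successive changes of variables. For bounded measurable $g$, independence of $\Theta_0,\Theta_1$, the density lower bound, and the evenness of cosine give
\[
\EE\bigl[g(T_2)\bigr] \geq 4 f_{\min}^2 \int_0^{\theta^*/2}\!\!\int_0^{\theta^*/2} g\bigl(2r\cos\theta_0 + 2r\cos\theta_1\bigr)\, d\theta_0\, d\theta_1.
\]

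In the inner integral I would substitute $u = 2r\cos\theta_1$, producing a Jacobian factor $1/(2r\sin\theta_1)$. Because $\theta^*/2<\pi/2$, sine is increasing on $[0,\theta^*/2]$ and $\sin\theta_1 \leq \sin(\theta^*/2)$, so
\[
\EE\bigl[g(T_2)\bigr] \geq \frac{2 f_{\min}^2}{r\sin(\theta^*/2)} \int_0^{\theta^*/2}\!\!\int_{2r\cos(\theta^*/2)}^{2r} g\bigl(2r\cos\theta_0 + u\bigr)\, du\, d\theta_0.
\]
Setting $t = 2r\cos\theta_0 + u$ and applying Fubini recasts this as $\int g(t)\,\lambda(t)\,dt$, where
\[
\lambda(t) = \Bigl|\bigl\{\theta_0 \in [0,\theta^*/2] : 2r\cos(\theta^*/2) + 2r\cos\theta_0 \leq t \leq 2r + 2r\cos\theta_0\bigr\}\Bigr|,
\]
giving $f_{T_2}(t) \geq \tfrac{2f_{\min}^2}{r\sin(\theta^*/2)}\,\lambda(t)$. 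The task thus reduces to a lower bound on $\lambda$.

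I would then rewrite the constraint on $\theta_0$ as $\cos\theta_0 \in [\beta(t),\alpha(t)]$ with $\alpha(t)=(t-2r\cos(\theta^*/2))/(2r)$ and $\beta(t)=t/(2r)-1$. Since $\cos$ is decreasing on $[0,\theta^*/2]$, the key observation is that the two constraint-saturating conditions $\alpha(t)=1$ and $\beta(t)=\cos(\theta^*/2)$ are satisfied at the same value $t=2r(1+\cos(\theta^*/2))$. This collapses what could look like three regimes into two, yielding the piecewise description
\[
\lambda(t) =
\begin{cases}
\dfrac{\theta^*}{2} - \arccos\!\Bigl(\dfrac{t - 2r\cos(\theta^*/2)}{2r}\Bigr), & 4r\cos(\theta^*/2) \leq t \leq 2r\bigl(1+\cos(\theta^*/2)\bigr),\\[4pt]
\arccos\!\Bigl(\dfrac{t}{2r} - 1\Bigr), & 2r\bigl(1+\cos(\theta^*/2)\bigr) \leq t \leq 4r,
\end{cases}
\]
which is continuous, increasing on the first piece and decreasing on the second (taking value $\theta^*/2$ at the junction), hence unimodal. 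Its minimum on $[4r\cos(\theta^*/2)+\eta,\,4r-\eta]$ is therefore attained at one of the two endpoints; the hypothesis $\eta<2r(1-\cos(\theta^*/2))$ places the left endpoint in the first piece and the right endpoint in the second, so the two candidate minima are exactly
\[
\tfrac{\theta^*}{2}-\arccos\!\bigl(\cos(\theta^*/2)+\tfrac{\eta}{2r}\bigr)\qquad \text{and}\qquad \arccos\!\bigl(1-\tfrac{\eta}{2r}\bigr),
\]
whose minimum, multiplied by $2f_{\min}^2/(r\sin(\theta^*/2))$, produces $\delta$.

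The main obstacle is the piecewise analysis of $\lambda$: recognizing that the two constraint transitions coincide at $t=2r(1+\cos(\theta^*/2))$, which is exactly what makes the $\min$ in the statement involve only the two endpoint values rather than a comparison among several interior formulas, and checking unimodality by an elementary derivative of $\arccos$ on each piece.
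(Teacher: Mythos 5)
Your proof is correct and follows essentially the same route as the paper: both reduce to the double integral over $[0,\theta^*/2]^2$, perform the change of variables $t=2r(\cos\theta_0+\cos\theta_1)$, split at $t=2r\bigl(1+\cos(\theta^*/2)\bigr)$, and evaluate the resulting unimodal lower bound at the two endpoints $4r\cos(\theta^*/2)+\eta$ and $4r-\eta$. Your only (harmless) deviation is bounding the Jacobian uniformly by $1/\bigl(2r\sin(\theta^*/2)\bigr)$ at the outset, whereas the paper carries the exact factor $1/\sqrt{1-(\tfrac{x}{2r}-\cos v)^2}$ and bounds it piecewise, arriving at the same constant $\delta$.
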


\begin{proof}
If the density function $f$ is supported on $\left[-\frac{\theta^*}{2},\frac{\theta^*}{2}\right]$, it is immediate to observe that $4r\cos\left( \frac{\theta^*}{2}\right) \leq T_2 \leq 4r$. But let be more precise.\\
Let $g:\RR\rightarrow\RR$ be a bounded measurable function. Let us recall that, thanks to \eqref{Equation qui lie Phi_n, Theta_n, tau_n}, $T_2=2r\left( \cos(\Theta_1)+\cos(\Theta_2) \right)$ with $\Theta_1,\Theta_2$ two independent random variables with density function $f$. We have, using Assumption $\left( \mathcal{H}'\right)$:

\begin{align*}
\EE\left[ g(T_2) \right] &= \EE\left[ g\left(2r\left( \cos(\Theta_1)+\cos(\Theta_2) \right)\right) \right]\\
&\geq f_{\min}^2 \int_{-\frac{\theta^*}{2}}^{\frac{\theta^*}{2}} \int_{-\frac{\theta^*}{2}}^{\frac{\theta^*}{2}} g\left( 2r\left( \cos(u)+\cos(v) \right) \right)\dt u \dt v\\
&= 4f_{\min}^2 \int_{0}^{\frac{\theta^*}{2}} \int_{0}^{\frac{\theta^*}{2}} g\left( 2r\left( \cos(u)+\cos(v) \right) \right)\dt u \dt v.
\end{align*}
The substitution $x=2r\left(\cos(u)+\cos(v)\right)$ in the integral with respect to $u$ gives then:
\begin{align*}
\EE\left[ g(T_2) \right] &\geq 4f_{\min}^2 \int_{0}^{\frac{\theta^*}{2}} \int_{2r\left(\cos\left(\frac{\theta^*}{2}\right)+\cos(v)\right)}^{2r(1+\cos(v))} g(x) \frac{1}{2r\sin\left( \arccos\left(\frac{x}{2r}-\cos(v) \right) \right)}\dt x \dt v.
\end{align*}
Fubini's theorem leads to
\begin{align*}
\EE\left[ g(T_2) \right] &\geq \frac{2f_{\min}^2}{r} \int_{4r\cos\left( \frac{\theta^*}{2} \right)}^{4r} \left( \int_0^{\frac{\theta^*}{2}} \frac{1}{\sqrt{1-\left(\frac{x}{2r}-\cos(v) \right)^2}} \mathbf{1}_{\frac{x}{2r}-1<\cos(v)<\frac{x}{2r}-\cos\left( \frac{\theta^*}{2} \right)} \dt v\right) g(x) \dt x.
\end{align*}
We then deduce a lower-bound for the density function of $T_2$:
\begin{equation*}
f_{T_2}(x) \geq\frac{2f_{\min}^2}{r}  \int_0^{\frac{\theta^*}{2}} \frac{1}{\sqrt{1-\left(\frac{x}{2r}-\cos(v) \right)^2}} \mathbf{1}_{\frac{x}{2r}-1<\cos(v)<\frac{x}{2r}-\cos\left( \frac{\theta^*}{2} \right)} \dt v \mathbf{1}_{x\in \left(4r\cos\left( \frac{\theta^*}{2} \right),4r \right)}.
\end{equation*}
Let $x\in \left(4r\cos\left( \frac{\theta^*}{2} \right),4r \right)$. Cutting the interval $\left(4r\cos\left( \frac{\theta^*}{2} \right),4r \right)$ at point $2r\left(1+\cos\left(\frac{\theta^*}{2}\right)\right)$, we get: 
\begin{align*}
f_{T_2}(x) &\geq \frac{2f_{\min}^2}{r}  \int_0^{\frac{\theta^*}{2}} \frac{1}{\sqrt{1-\left(\frac{x}{2r}-\cos(v) \right)^2}} \mathbf{1}_{\frac{x}{2r}-1<\cos(v)<\frac{x}{2r}-\cos\left( \frac{\theta^*}{2} \right)} \dt v  \mathbf{1}_{x\in \left(4r\cos\left(\frac{\theta^*}{2}\right),2r\left(1+\cos\left(\frac{\theta^*}{2}\right)\right)\right]}\\
&\hspace*{1cm} +\frac{2f_{\min}^2}{r}  \int_0^{\frac{\theta^*}{2}} \frac{1}{\sqrt{1-\left(\frac{x}{2r}-\cos(v) \right)^2}} \mathbf{1}_{\frac{x}{2r}-1<\cos(v)<\frac{x}{2r}-\cos\left( \frac{\theta^*}{2} \right)} \dt v  \mathbf{1}_{x\in \left[2r\left( 1+\cos\left(\frac{\theta^*}{2}\right) \right),4r\right)}\\
&=\frac{2f_{\min}^2}{r}  \int_{\arccos\left(\frac{x}{2r}-\cos\left(\frac{\theta^*}{2}\right)\right)}^{\frac{\theta^*}{2}} \frac{1}{\sqrt{1-\left(\frac{x}{2r}-\cos(v) \right)^2}}  \dt v  \mathbf{1}_{x\in \left(4r\cos\left(\frac{\theta^*}{2}\right),2r\left(1+\cos\left(\frac{\theta^*}{2}\right)\right)\right]}\\
&\hspace*{1cm} + \frac{2f_{\min}^2}{r}  \int_{0}^{\arccos\left(\frac{x}{2r}-1 \right)} \frac{1}{\sqrt{1-\left(\frac{x}{2r}-\cos(v) \right)^2}} \dt v  \mathbf{1}_{x\in \left[2r\left( 1+\cos\left(\frac{\theta^*}{2}\right) \right),4r\right)}.
\end{align*}
Then, for $v\in \left(\arccos\left(\frac{x}{2r}-\cos\left(\frac{\theta^*}{2}\right)\right),\frac{\theta^*}{2}\right)$ we have $\cos(v)\leq \frac{x}{2r}-\cos\left(\frac{\theta^*}{2}\right)$, and for \\$v\in \left( 0,\arccos\left(\frac{x}{2r}-1 \right)\right)$ we have $\cos(v)\leq 1$. We thus have:
\begin{align*}
f_{T_2}(x) &\geq \frac{2f_{\min}^2}{r\sin\left(\frac{\theta^*}{2}\right)} \left( \frac{\theta^*}{2}-\arccos\left(\frac{x}{2r}-\cos\left(\frac{\theta^*}{2}\right) \right) \right) \mathbf{1}_{x\in \left(4r\cos\left(\frac{\theta^*}{2}\right),2r\left(1+\cos\left(\frac{\theta^*}{2}\right)\right)\right]}\\
&\hspace*{2cm} +\frac{2f_{\min}^2}{r} \frac{\arccos\left(\frac{x}{2r}-1 \right)}{\sqrt{\frac{x}{r}\left( 1-\frac{x}{4r}\right)}}  \mathbf{1}_{x\in \left[ 2r\left( 1+\cos\left(\frac{\theta^*}{2}\right)\right),4r \right)}.
\end{align*}
We can observe than the lower bound of $f_{T_2}$ is strictly positive for $x\in \left(4r\cos\left( \frac{\theta^*}{2} \right),4r \right)$, but is equal to $0$ when $x$ is one of the extremal points of this interval. Let therefore introduce $\eta\in \left(0,2r\left(1-\cos\left(\frac{\theta^*}{2}\right) \right)\right)$. We have:
\begin{itemize}
\item for $x\in [4r\cos\left(\frac{\theta^*}{2}\right)+\eta,2r\left( 1+\cos\left(\frac{\theta^*}{2}\right)\right)]$ we have
\begin{align*}
\frac{2f_{\min}^2}{r\sin\left(\frac{\theta^*}{2}\right)} &\left( \frac{\theta^*}{2}-\arccos\left(\frac{x}{2r}-\cos\left(\frac{\theta^*}{2}\right) \right) \right)\\
&\geq \frac{2f_{\min}^2}{r\sin\left(\frac{\theta^*}{2}\right)} \left( \frac{\theta^*}{2}-\arccos\left(\frac{4r\cos\left(\frac{\theta^*}{2}\right)+\eta}{2r}-\cos\left(\frac{\theta^*}{2}\right) \right) \right)\\
&= \frac{2f_{\min}^2}{r\sin\left(\frac{\theta^*}{2}\right)} \left( \frac{\theta^*}{2}-\arccos\left(\cos\left(\frac{\theta^*}{2}\right) +\frac{\eta}{2r}\right) \right)
\end{align*}
 \item for $x\in \left[ 2r\left( 1+\cos\left(\frac{\theta^*}{2}\right)\right),4r-\eta \right]$ we have
 \begin{align*}
 \frac{2f_{\min}^2}{r} \frac{\arccos\left(\frac{x}{2r}-1 \right)}{\sqrt{\frac{x}{r}\left( 1-\frac{x}{4r}\right)}}  &\geq \frac{2f_{\min}^2}{r} \frac{\arccos\left(\frac{4r-\eta}{2r}-1 \right)}{\sqrt{\frac{2r\left( 1+\cos\left(\frac{\theta^*}{2}\right)\right)}{r}\left( 1-\frac{2r\left( 1+\cos\left(\frac{\theta^*}{2}\right)\right)}{4r}\right)}} \\
 &= \frac{2f_{\min}^2}{r} \frac{\arccos\left(1-\frac{\eta}{2r} \right)}{\sqrt{\left( 1+\cos\left(\frac{\theta^*}{2}\right)\right)\left( 1-\cos\left(\frac{\theta^*}{2}\right)\right)}}\\
 &= \frac{2f_{\min}^2}{r\sin\left(\frac{\theta^*}{2}\right)}\arccos\left(1-\frac{\eta}{2r} \right).
 \end{align*}
\end{itemize}
The result follows immediately.

\end{proof}

\textbf{Notation}: For $x\in\partial\Br$, we denote by $\varphi_x$ the unique angle in $[0,2\pi)$ describing the position of $x$ on $\partial\Br$.

\begin{Prop}\label{Prop continuite (angle,temps)}
Let $(X_t,V_t)_{t\geq 0}$ be the stochastic billiard process in $\Br$ satisfying Assumption $(\mathcal{H}')$ with $\theta^*\in\left( \frac{2\pi}{3},\pi\right)$.\\
For all $\varepsilon\in \left( 0,\frac{\theta^*}{4}\right)$, the pair $\left( \Phi_2^x,T_2^x\right)$ is $\frac{f_{\min}^2}{2r\sin\left(\frac{\theta^*}{4}\right)}$-continuous on $(\varphi_x-\theta^*+4\varepsilon,\varphi_x+\theta^*-4\varepsilon)\times \left(2r\cos\left(\frac{\theta^*}{4}\right),2r\cos\left(\frac{\theta^*}{4}-\varepsilon\right)\right)$ for all $x\in\partial\mathcal{B}(0,r)$.
\end{Prop}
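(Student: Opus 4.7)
The plan is to bound below the joint density of $(\Phi_2^x, T_2^x)$ by pushing forward the joint density of $(\Theta_0, \Theta_1)$ through an explicit change of variables, then integrating over $A_1 \times B_1$. By iterating Proposition~\ref{Prop lien entre les variables dans le cercle}, one has
\begin{equation*}
\Phi_2^x = \varphi_x + 2\Theta_0 + 2\Theta_1 \pmod{2\pi}, \qquad T_2^x = 2r(\cos\Theta_0 + \cos\Theta_1),
\end{equation*}
where $\Theta_0, \Theta_1$ are independent with density $f$ bounded below by $f_{\min}$ on $[-\theta^*/2,\theta^*/2]$ by Assumption $(\mathcal{H}')$. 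A direct computation gives the Jacobian of the map $\Psi: (\theta_0,\theta_1)\mapsto (\varphi_x+2\theta_0+2\theta_1,\, 2r(\cos\theta_0+\cos\theta_1))$ as $|\det D\Psi| = 4r|\sin\theta_0-\sin\theta_1|$. Since $\Psi$ is symmetric in its two arguments, each interior point of its image has two preimages, and the joint density of $(\Phi_2^x, T_2^x)$ at $(\phi,t)$ receives both contributions.

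I would then substitute $s = (\theta_0+\theta_1)/2$ and $d = (\theta_0-\theta_1)/2$ and use the product-to-sum identities
\begin{equation*}
\cos\theta_0+\cos\theta_1 = 2\cos s\cos d, \qquad \sin\theta_0-\sin\theta_1 = 2\cos s\sin d.
\end{equation*}
These give $\phi-\varphi_x = 4s$, so $s$ is determined by $\phi$, and $\cos d = t/(4r\cos s)$, so $|d|$ is determined by $(\phi,t)$. The proposition's region is designed so that the $\phi$-constraint yields $|s|\leq \theta^*/4-\varepsilon$ (hence $\cos s\geq \cos(\theta^*/4-\varepsilon)>0$), and the $t$-constraint then forces $|d|\leq \theta^*/4$. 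The two preimages $(s\pm d,\, s\mp d)$ therefore satisfy $|\theta_0|,|\theta_1|\leq|s|+|d|\leq \theta^*/2-\varepsilon < \theta^*/2$, so they both lie in the support of $f$ where $f(\theta_0)f(\theta_1)\geq f_{\min}^2$. Simultaneously, $|\sin\theta_0-\sin\theta_1| = 2|\cos s||\sin d|\leq 2\sin(\theta^*/4)$, so the Jacobian admits a uniform upper bound on the region. Combining these three ingredients (lower bound on $f\otimes f$, upper bound on $|\det D\Psi|$, multiplicity two) gives the claimed pointwise lower bound for the density of $(\Phi_2^x,T_2^x)$, and the $\alpha$-continuity statement then follows by integrating over any measurable $A_1\times B_1$ inside the region.

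The delicate step is the simultaneous geometric verification: one must check that the region for $(\phi,t)$ is narrow enough for \emph{both} preimages $(s+d,s-d)$ and $(s-d,s+d)$ to lie in $[-\theta^*/2,\theta^*/2]^2$ (needed for the lower bound $f\geq f_{\min}$ at each), while remaining wide enough that a uniform upper bound on $|\sin\theta_0-\sin\theta_1|$ still holds. The $\varepsilon$-margin in the $\phi$-interval and the precise endpoints of the $t$-interval are exactly what make these two requirements compatible; carrying out the trigonometric bookkeeping to confirm $|d|\leq\theta^*/4$ uniformly on the region is the main computational step.
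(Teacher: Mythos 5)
Your route is in substance the paper's own: the paper also passes to the sum/difference variables (its map $g(u,v)=\left(\frac{u+v}{2},\frac{u-v}{2}\right)$ followed by the one-dimensional substitution $z=\cos x\cos y$ is exactly your two-dimensional Jacobian computation split into two steps). The gap is precisely the step you defer as ``the main computational step'' and then assert: on the region as stated, the $t$-constraint does \emph{not} force $|d|\leq\frac{\theta^*}{4}$. Indeed $\cos d=\frac{t}{4r\cos s}$ with $t\leq 2r\cos\left(\frac{\theta^*}{4}-\varepsilon\right)$ and $\cos s\geq\cos\left(\frac{\theta^*}{4}-\varepsilon\right)$, so $\cos d\leq\frac{1}{2}$ and hence $|d|\geq\frac{\pi}{3}>\frac{\theta^*}{4}$ (recall $\theta^*<\pi$). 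Worse, the preimages can leave the square $\left[-\frac{\theta^*}{2},\frac{\theta^*}{2}\right]^2$ on which $f\geq f_{\min}$ is available: for $\theta^*$ close to $\frac{2\pi}{3}$, $s=0$ and $t$ close to $2r\cos\left(\frac{\theta^*}{4}\right)$, one gets $|d|=\arccos\left(\frac{1}{2}\cos\left(\frac{\theta^*}{4}\right)\right)>\frac{\theta^*}{2}$, so $|\theta_0|=|d|>\frac{\theta^*}{2}$ and the joint density can actually vanish there (take $f$ uniform on $\left[-\frac{\theta^*}{2},\frac{\theta^*}{2}\right]$). So the argument does not close on the stated time window, and no bookkeeping will make it close.

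What your computation has in fact uncovered is a factor-of-two inconsistency in the time window, which the paper's own proof reproduces: after the substitution $z=\cos x\cos y$ the indicator there should read $\mathbf{1}_{z\in\left(\frac{r_1}{4r},\frac{r_2}{4r}\right)}$ (since $T_2=4r\cos x\cos y$), not $\mathbf{1}_{z\in\left(\frac{r_1}{2r},\frac{r_2}{2r}\right)}$; and the window actually invoked later in the proof of Theorem \ref{Thm convergence billiard avec theta*}, namely $\left(2\sqrt{2}r,4r\cos\left(\frac{\pi}{4}-\varepsilon\right)\right)$, is the $4r$-scaled one. On the corrected window $\left(4r\cos\left(\frac{\theta^*}{4}\right),4r\cos\left(\frac{\theta^*}{4}-\varepsilon\right)\right)$ your verification does go through: $\cos d=\frac{t}{4r\cos s}\in\left[\cos\left(\frac{\theta^*}{4}\right),1\right]$, hence $|d|\leq\frac{\theta^*}{4}$, $|s|+|d|\leq\frac{\theta^*}{2}-\varepsilon$, and $|\sin\theta_0-\sin\theta_1|\leq 2\sin\left(\frac{\theta^*}{4}\right)$. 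But even then your three ingredients yield a density bound of $2\cdot\frac{f_{\min}^2}{4r\cdot 2\sin\left(\frac{\theta^*}{4}\right)}=\frac{f_{\min}^2}{4r\sin\left(\frac{\theta^*}{4}\right)}$, half the constant in the statement; you would need either to accept the smaller constant or to exhibit a cancellation you have not provided. As written, the proposal does not establish the proposition.
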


\begin{proof}
By symmetry of the process, it is sufficient to prove the lemma for $x\in \partial\Br$ such that $\varphi_x=0$, what we do.\\
Let $\varepsilon\in \left( 0,\frac{\pi}{4}\right)$, $A\subset (-\theta^*+4\varepsilon,\theta^*-4\varepsilon)$ and $(r_1,r_2)\subset \left(2r\cos\left(\frac{\theta^*}{4}\right),2r\cos\left(\frac{\theta^*}{4}-\varepsilon\right)\right)$.\\
Let us recall that $ \Phi_{2}^0=2\Theta_1+2\Theta_2$ and $T_2^0=2r(\cos(\Theta_1)+\cos(\Theta_2))$, where $\Theta_1,\Theta_2$ are independent variables with density function $f$. We thus have:
\begin{align*}
\PP &\left( \Phi_{2}^0\in A, T_2^0\in (r_1,r_2) \right)\\
&= \PP \left(2\Theta_1+2\Theta_2\in A, 2r(\cos(\Theta_1)+\cos(\Theta_2))\in (r_1,r_2) \right)\\
&= \int_{-\frac{\pi}{2}}^{\frac{\pi}{2}}\int_{-\frac{\pi}{2}}^{\frac{\pi}{2}} \mathbf{1}_{2u+2v\in A}\mathbf{1}_{\cos(u)+\cos(v)\in \left(\frac{r_1}{2r},\frac{r_2}{2r}\right)}f(u)f(v)\dt u \dt v\\
&\geq f_{\min}^2\int_{-\frac{\theta^*}{2}}^{\frac{\theta^*}{2}}\int_{-\frac{\theta^*}{2}}^{\frac{\theta^*}{2}} \mathbf{1}_{\frac{u+v}{2}\in \frac{A}{4}}\mathbf{1}_{\cos\left(\frac{u+v}{2}\right)\cos\left(\frac{u-v}{2}\right)\in \left(\frac{r_1}{4r},\frac{r_2}{4r}\right)}\dt u \dt v.
\end{align*}
Let us consider 
\[ g~:~(u,v)\in \left[-\frac{\theta^*}{2},\frac{\theta^*}{2}\right]^2 \longmapsto \left( \frac{u+v}{2}, \frac{u-v}{2} \right). \]
We have 
\[ \left[-\frac{\theta^*}{4},\frac{\theta^*}{4}\right]^2\subset g \left(\left[-\frac{\theta^*}{2},\frac{\theta^*}{2}\right]^2\right),\]
and 
\[ \left\lvert \det \mathrm{Jac}_g \right\lvert = \frac{1}{2}. \]
With this substitution, and using Fubini's theorem, we get:
\begin{align*}
\PP &\left( \Phi_{2}^0\in A, T_2^0\in (r_1,r_2) \right)\\
&\geq f_{\min}^2 \int_{-\frac{\theta^*}{4}}^{\frac{\theta^*}{4}}\int_{-\frac{\theta^*}{4}}^{\frac{\theta^*}{4}} \mathbf{1}_{x\in \frac{A}{4}}\mathbf{1}_{\cos(x)\cos(y)\in \left( \frac{r_1}{4r},\frac{r_2}{4r} \right)}2\dt x \dt y\\
&= 4f_{\min}^2 \int_{-\frac{\theta^*}{4}}^{\frac{\theta^*}{4}}\int_0^{\frac{\theta^*}{4}} \mathbf{1}_{\cos(x)\cos(y)\in \left( \frac{r_1}{4r},\frac{r_2}{4r} \right)}\dt y \mathbf{1}_{x\in \frac{A}{4}} \dt x.
\end{align*}
We now do the substitution $z=\cos(x)\cos(y)$ in the integral with respect to $\dt y$:
\begin{align*}
\PP &\left( \Phi_{2}^0\in A, T_2^0\in (r_1,r_2) \right)\\
&\geq 4f_{\min}^2 \int_{-\frac{\theta^*}{4}}^{\frac{\theta^*}{4}} \int_{\cos\left(\frac{\theta^*}{4}\right)\cos(x)}^{\cos(x)} \mathbf{1}_{z\in \left( \frac{r_1}{2r},\frac{r_2}{2r} \right)}\frac{1}{\sqrt{\cos^2(x)-z^2}} \dt z \mathbf{1}_{x\in \frac{A}{4}}\dt x\\
&\geq 4f_{\min}^2 \int_{-\frac{\theta^*}{4}}^{\frac{\theta^*}{4}} \int_{\cos\left(\frac{\theta^*}{4}\right)\cos(x)}^{\cos(x)} \mathbf{1}_{z\in \left( \frac{r_1}{2r},\frac{r_2}{2r} \right)}\frac{1}{\sin\left(\frac{\theta^*}{4}\right)} \dt z \mathbf{1}_{x\in \frac{A}{4}}\dt x\\
&\geq \frac{4f_{\min}^2}{\sin\left(\frac{\theta^*}{4}\right)}  \int_{-\frac{\theta^*}{4}+\varepsilon}^{\frac{\theta^*}{4}-\varepsilon} \int_{\cos\left(\frac{\theta^*}{4}\right)}^{\cos\left(\frac{\theta^*}{4}-\varepsilon\right)} \mathbf{1}_{z\in \left( \frac{r_1}{2r},\frac{r_2}{2r} \right)} \dt z \mathbf{1}_{x\in \frac{A}{4}}\dt x\\
&= \frac{f_{\min}^2}{2r\sin\left(\frac{\theta^*}{4}\right)}  (r_2-r_1)\left| A \right|,
\end{align*}
where we have used for the last equality the fact that $A\subset [-\theta^*+4\varepsilon,\theta^*-4\varepsilon)$ and $(r_1,r_2)\subset \left(2r\cos\left(\frac{\theta^*}{4}\right),2r\cos\left(\frac{\theta^*}{4}-\varepsilon\right)\right)$.\\
This ends the proof.
\end{proof}

Let fix $\eta\in (0,r\left(1-2\cos\left(\frac{\theta^*}{2}\right)\right))$ and $\varepsilon\in \left( 0,\frac{2\theta^*-\pi}{8} \right)$ (the condition $\theta^*>\frac{2\pi}{3}$ ensures that we can take such $\eta$ and $\varepsilon$).\\
Let define 
\begin{equation}\label{def h convergence du billard dans le cercle}
h= 4r\left(1-\cos\left(\frac{\theta^*}{2} \right)\right)-2\eta-2r= 2r\left( 1-2\cos\left( \frac{\theta^*}{2}\right)\right)-2\eta >0
\end{equation}
and 
\begin{align}\label{def alpha bis}
\alpha&=\frac{f_{\min}^2}{2r\sin\left(\frac{\theta^*}{4}\right)}(4\theta^*-2\pi-16\varepsilon)2r\left( \cos\left(\frac{\theta^*}{4}-\varepsilon\right)-\cos\left(\frac{\theta^*}{4}\right) \right) \nonumber \\
&= \frac{f_{\min}^2}{\sin\left(\frac{\theta^*}{4}\right)}(4\theta^*-2\pi-16\varepsilon)\left( \cos\left(\frac{\theta^*}{4}-\varepsilon\right)-\cos\left(\frac{\theta^*}{4}\right) \right)
\end{align}

\begin{Theo}\label{Thm convergence billiard avec theta*}
Let $(X_t,V_t)_{t\geq 0}$ be the stochastic billiard process in $B_r$ satisfying Assumption $(\mathcal{H}')$ with $\theta^*\in\left( \frac{2\pi}{3},\pi\right)$.\\
There exists a unique invariant probability measure on $\Br\times \S$ for the process $(X_t,V_t)_{t\geq 0}$.\\
Moreover let $\eta\in (0,r\left(1-2\cos\left(\frac{\theta^*}{2}\right)\right))$ and $\varepsilon\in \left( 0,\frac{2\theta^*-\pi}{8} \right)$. For all $t\geq 0$ and all $\lambda<\lambda_M$ we have
\begin{equation*}
\lVert \PP\left( X_t\in \cdot, V_t\in \cdot \right) - \chi \lVert_{TV} \leq C_{\lambda}\mathrm{e}^{-\lambda t},
\end{equation*}
where
\begin{equation}\label{Eq lambaM}
\lambda_M=\min\left\{\frac{1}{4r}\log\left( \frac{1}{1-\delta h}\right) ; \frac{1}{4r}\log\left( \frac{-(1-\delta h)+\sqrt{(1-\delta h)^2+4\delta h(1-\alpha)}}{2\delta h(1-\alpha)}\right) \right\}.
\end{equation}
and
\begin{equation*}
C_\lambda=\frac{\alpha\delta h \mathrm{e}^{10\lambda r}}{1-\mathrm{e}^{4\lambda r}(1-\delta h)-\mathrm{e}^{8\lambda r}\delta h(1-\alpha)},
\end{equation*}
with $\delta$, $h$ and $\alpha$ respectively given by \eqref{minoration de fT_2 dans le cercle}, \eqref{def h convergence du billard dans le cercle} and \eqref{def alpha bis}.
\end{Theo}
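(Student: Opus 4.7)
My plan is to construct an explicit Markov coupling $\left((X_t, V_t),(\tilde X_t, \tilde V_t)\right)_{t\geq 0}$ of two copies of the stochastic billiard with arbitrary starting conditions, and to bound the Laplace transform of its coupling time $T_c$. Following the two-phase strategy outlined in Section \ref{Subsection A coupling for the stochastic billiard}, I would work at the level of successive pairs of bounces: every \emph{round} advances both processes by two boundary hits, and therefore takes time at most $T_2\leq 4r$ since $T_2=2r(\cos\Theta_1+\cos\Theta_2)$.

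For the time-synchronisation phase, let $\Delta$ denote the current gap between the boundary-hit times of the two copies at the start of a round. Proposition \ref{Prop densite T2} yields $f_{T_2}\geq \delta$ on an interval $I$ of length $L=2r+h$, so $I$ and $I-\Delta$ overlap on a set of length at least $L-|\Delta|\geq h$ whenever $|\Delta|\leq 2r$. Since the first boundary hit of either copy requires at most a distance $2r$, this bound $|\Delta|\leq 2r$ holds initially. A maximal coupling of the two copies of $T_2$, supplemented with a shared-randomness fall-back upon failure so as to preserve $|\Delta|\leq 2r$, then makes the next boundary-hit times coincide with probability at least $\delta h$ per round.

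Once times are synchronised, the two processes sit at boundary points $x$ and $\tilde x$ with circular angular gap $d\in[0,\pi]$. By Proposition \ref{Prop continuite (angle,temps)} the pair $(\Phi_2,T_2)$ is $\frac{f_{\min}^2}{2r\sin(\theta^*/4)}$-continuous on an angular window of length $2\theta^*-8\varepsilon$ centred at $\varphi_x$, crossed with a time window of length $2r(\cos(\theta^*/4-\varepsilon)-\cos(\theta^*/4))$. The conditions $\theta^*>2\pi/3$ and $\varepsilon<(2\theta^*-\pi)/8$ ensure that this angular window has length strictly greater than $\pi$, so that the two windows centred at $\varphi_x$ and $\varphi_{\tilde x}$ automatically overlap on the circle (possibly on two arcs) with total length at least $2(2\theta^*-8\varepsilon)-2\pi=4\theta^*-2\pi-16\varepsilon$, uniformly in $d$. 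A joint maximal coupling of $(\Phi_2,T_2)$ therefore couples both positions and times with probability at least $\alpha$ per round.

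I then reduce $T_c$ to an absorption time $N$ in a three-state Markov chain, with state $0=$ times unsynchronised, state $1=$ times synchronised but positions not, and state $2=$ fully coupled (absorbing); a failure in state $1$ is conservatively sent back to state $0$, giving transition lower bounds $\PP(0\to 1)\geq\delta h$ and $\PP(1\to 2)\geq\alpha$. Solving the linear system for the generating functions $\phi_i(s)=\EE[s^N\mid\text{start}=i]$ yields
\[
\phi_0(s)=\frac{s^2\,\delta h\,\alpha}{1-s(1-\delta h)-s^2\,\delta h(1-\alpha)}.
\]
Since $T_c\leq 2r+4rN$, we obtain $\EE[\expo^{\lambda T_c}]\leq \expo^{2\lambda r}\phi_0(\expo^{4\lambda r})$, which is exactly $C_\lambda$; the two bounds defining $\lambda_M$ in \eqref{Eq lambaM} arise respectively from requiring $\expo^{4\lambda r}(1-\delta h)<1$ and positivity of the denominator above (a quadratic inequality in $\expo^{4\lambda r}$). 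Existence of $\chi$ follows from compactness of $\Br\times\S$ and Feller continuity of the semigroup, and uniqueness is an immediate consequence of the total-variation contraction. The main technical difficulty will lie in the bookkeeping of the time-synchronisation phase, where the fall-back coupling must be chosen so that $|\Delta|$ stays bounded by $2r$ after every failed round, ensuring that the per-round success probability $\delta h$ remains uniform and the three-state Markov-chain reduction is legitimate.
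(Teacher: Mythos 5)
Your overall architecture is the paper's: the same two ingredients (Proposition \ref{Prop densite T2} for the time-synchronisation rounds, Proposition \ref{Prop continuite (angle,temps)} for the joint position--time rounds), the same overlap estimates ($L-|\Delta|\geq h$ and $4\theta^*-2\pi-16\varepsilon$), and a generating-function computation that is algebraically identical to the paper's two nested geometric sums --- your $\expo^{2\lambda r}\phi_0(\expo^{4\lambda r})$ reproduces $C_\lambda$ and the two constraints defining $\lambda_M$ exactly. The one place where you genuinely deviate is also the one genuine gap: the ``shared-randomness fall-back'' that is supposed to keep the clock gap $|\Delta|\leq 2r$ after a \emph{failed} synchronisation round. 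Such a coupling does not exist in general. In the extreme form (``on success $\tilde T_2=T_2-\Delta$, on failure $\tilde T_2=T_2$'') it is outright impossible: writing $\mu_A$ for the sub-probability law of $T_2$ on the success event, matching the marginal of $\tilde T_2$ forces $\mu_A$ to be invariant under translation by $\Delta$, hence $\mu_A=0$ whenever $\Delta\neq 0$. In the weaker form ($|\Delta+\tilde T_2-T_2|\leq 2r$ on failure) you are asking for a coupling of the two \emph{residual} distributions of the maximal coupling that is supported in a strip of width $4r$; these residuals are dictated by $f_{T_2}$ and $\Delta$, they can concentrate near opposite ends of $[0,4r]$, and with $|\Delta|$ up to $2r$ the new gap can reach $6r$. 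So the per-round success probability $\delta h$ is not uniform under your scheme, and the three-state chain reduction is not legitimate as stated.

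The fix is the paper's: upon failure, do not try to control the gap through the coupling at all, but let the \emph{lagging} copy perform extra bounces until its cumulative clock first overtakes the other's (the stopping times $m_k$ and $\ell$ in the paper). Since each bounce adds at most $2r$, the overshoot at that first crossing is automatically at most $2r$, restoring the hypothesis for the next round with no condition on the residual laws. This changes your time accounting: $T_c\leq 2r+4rN$ no longer holds literally, because the re-synchronisation bounces cost time. The paper absorbs this by always measuring the elapsed time along the copy that performed \emph{no} extra bounces in the last re-synchronisation (on the success event the two clocks agree, so either may be used), whence the coupling time is still $T_0$ plus a sum of $N$ variables each distributed as $T_2$ and bounded by $4r$ --- and your generating-function computation then goes through verbatim. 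With that replacement your proof coincides with the paper's.
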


\begin{Rema}
The following proof of this theorem is largely inspired by the proof of Theorem 2.2 in \cite{CPSV}.
\end{Rema}

\begin{proof}
The existence of the invariant probability measure comes from the compactness of the space $\Br\times\SS^1$. The following proof show its uniqueness and gives the speed of convergence of the stochastic billiard to equilibrium.\\
Let $(X_t,V_t)_{t\geq 0}$ and $(\tilde{X}_t,\tilde{V}_t)_{t\geq 0}$ be two versions of the stochastic billiard with $(X_0,V_0)=(x_0,v_0)\in\Br\times\S$ and $(\tilde{X}_0,\tilde{V}_0)=(\tilde{x}_0,\tilde{v}_0)\in\Br\times\S$.\\
We recall the definition of $T_0$ and $\tilde{T}_0$ and define $w,\tilde{w}$ as follows:
\[ T_0=\inf\{ t\geq 0: x_0+tv_0 \notin K\}, ~~~~~ w=x_0+T_0 v_0\in\partial \Br,  \]
and
\[\tilde{T}_0=\inf\{ t\geq 0: \tilde{x}_0+t\tilde{v}_0 \notin K\}, ~~~~~ \tilde{w}=\tilde{x}_0+\tilde{T}_0 \tilde{v}_0\in\partial Br .\]
We are going to couple $(X_t,V_t)$ and $(\tilde{X}_t,\tilde{V}_t)$ in two steps: we first couple the times, so that the two processes hit $\partial\Br$ at a same time, and then we couple both position and time.\\
In the sequel, we write $X_{T_n}^a$ or $\tilde{X}_{T_n}^a$ for the position of the Markov chain at time $T_n$ when it starts at position $a\in\partial\Br$. Similarly, we write $T_n^a$ and $\tilde{T}_n^a$ for the successive hitting times of $\partial\Br$ of the processes.\medskip\\
\textbf{Step 1}. 
Proposition \ref{Prop densite T2} ensures that $T_2^w$ and $\tilde{T}_2^{\tilde{w}}$ are both $\delta$-continuous on $[4r\cos\left(\frac{\theta^*}{2}\right)+\eta,4r-\eta]$. Therefore, the variables $T_0+T_2^w$ and $\tilde{T}_0+\tilde{T}_2^{\tilde{w}}$ are $\delta$-continuous on \\$[T_0+4r\cos\left(\frac{\theta^*}{2}\right)+\eta,T_0+4r-\eta]\cap [\tilde{T}_0+4r\cos\left(\frac{\theta^*}{2}\right)+\eta,\tilde{T}_0+4r-\eta]$, with \\$\left\lvert [T_0+2r\cos\left(\frac{\theta^*}{2}\right)+\eta,T_0+4r-\eta]\cap [\tilde{T}_0+2r\cos\left(\frac{\theta^*}{2}\right)+\eta,\tilde{T}_0+4r-\eta] \right\lvert \geq h$ since $\lvert T_0-\tilde{T}_0\lvert \leq 2r$. Note that the condition $\theta^*>\frac{2\pi}{3}$ has been introduced to ensure that this intersection is non-empty.\\
Thus, there exists a coupling of $T_0+T_2^w$ and $\tilde{T}_0+\tilde{T}_2^{\tilde{w}}$ such that 
\[ \PP\left( E_1 \right) \geq \delta h, \]
where
\begin{equation*}
E_1=\left\{ T_0+ T_2^w = \tilde{T}_0+\tilde{T}_2^{\tilde{w}} \right\}.
\end{equation*}
On the event $E_1$ we define $T_c^1=T_0+T^w_2$.\\
On the event $E_1^c$, we can suppose, by symmetry that $T_0+ T_2^w \leq  \tilde{T}_0+\tilde{T}_2^{\tilde{w}}$. In order to try again to couple the hitting times, we need to begin at times whose difference is smaller than $2r$. Let thus  define
\begin{equation*}
m_1=\min\left\{ n>0: T_0+ T_2^w + T_n^{X_{T_0+ T_2^w}} > \tilde{T}_0+\tilde{T}_2^{\tilde{w}} \right\} ~~ \text{and} ~~ \tilde{m}_1=0.
\end{equation*}
We then have $\left\lvert \left(T_0+ T_2^w + T_{m_1}^{X_{T_0+ T_2^w}} \right)- \left(\tilde{T}_0 + \tilde{T}_{2}^{\tilde{w}}+\tilde{T}_{\tilde{m}_1}^{\tilde{X}_{\tilde{T}_0 + \tilde{T}_{2}^{\tilde{w}}}} \right) \right\lvert \leq 2r$.\\
Defining
\[ Z_0=X_{T_0+ T_2^w}, ~~~ Z_1=X_{T_0+ T_2^w + T_{m_1}^{Z_0}}, ~~~  \tilde{Z}_0=\tilde{X}_{\tilde{T}_0 + \tilde{T}_{2}^{\tilde{w}}}, ~~~ \tilde{Z}_1=\tilde{X}_{\tilde{T}_0 + \tilde{T}_{2}^{\tilde{w}}+\tilde{T}_{\tilde{m}_1}^{\tilde{Z}_0}} ,  \]
we obtain as previously:
\begin{equation*}
\PP\left( E_2 \lvert E_1^c \right) \geq \delta h,
\end{equation*}
where 
\begin{equation*}
E_2=\left\{ T_0+ T_2^w + T_{m_1}^{Z_0} + T_2^{Z_1} = \tilde{T}_0 + \tilde{T}_{2}^{\tilde{w}}+\tilde{T}_{\tilde{m}_1}^{\tilde{Z}_0} + \tilde{T}_2^{\tilde{Z}_1}  \right\}.
\end{equation*}
On the event $E_1^c\cap E_2$ we define $T^1_c=T_0+ T_2^w + T_{m_1}^{Z_0} + T_2^{Z_1}$. We thus have $T^1_c \overset{\mathcal{L}}{=} T_0 + R^1+R^2$, with $R^1,R^2$ independent variables with distribution $f_{T_2}$.\\
We then repeat the same procedure. We thus construct two sequences of stopping times $(m_k)_{k\geq 1}$, $(\tilde{m}_k)_{k\geq 1}$ and a sequence of events $(E_k)_{k\geq 1}$
satisfying
\begin{equation*}
\PP\left( E_k \lvert E_1^c\cap \cdots \cap E_{k-1}^c \right)\geq \delta h.
\end{equation*}
On the event $E_1^c\cap \cdots \cap E_{k-1}^c\cap E_k $ we define $T_c^1$ as previously, and we have $T_c^1\overset{\mathcal{L}}{=} T_0 + R^1+\cdots+R^k$ with $R^1,\cdots,R^k$ independent variables with distribution $f_{T_2}$. By construction, $T_c^1$ is the coupling time of the hitting times of the boundary.
\medskip \\
\textbf{Step 2}. Let us now work conditionally on $T^1_c$.\\
Let define $y=X_{T_c^1}^w$ and $\tilde{y}=\tilde{X}_{T_c^1}^{\tilde{w}}$ in order to simplify the notations. By construction of $T^1_c$, $y$ and $\tilde{y}$ are on $\partial\Br$. We define $N^1_c=\min\left\{n>0: X_{T_n}^w=y \right\}$, i.e. $T_c^1$ is the time at which the chain starting at $w$ hit the boundary for the $N_c^1$-th time.\\
Proposition \ref{Prop continuite (angle,temps)} ensures that the couples $\left( X_{T_2}^{y},T_2^{y} \right)$ and $\left( \tilde{X}_{T_2}^{\tilde{y}},\tilde{T}_2^{\tilde{Xy}} \right)$ are both $\frac{f_{\min}^2}{2r\sin\left(\frac{\theta^*}{4}\right)}$-continuous on the set $\left( (\varphi_{y}-\theta^*+4\varepsilon,\varphi_{y}+\theta^*-4\varepsilon)\cap (\varphi_{\tilde{y}}-\theta^*+4\varepsilon,\varphi_{\tilde{y}}+\theta^*-4\varepsilon) \right) \times \left(2\sqrt{2}r, 4\cos\left(\frac{\pi}{4}-\varepsilon\right)r \right)$, with $\left\lvert (\varphi_{y}-\theta^*+4\varepsilon,\varphi_{y}+\theta^*-4\varepsilon)\cap (\varphi_{\tilde{y}}-\theta^*+4\varepsilon,\varphi_{\tilde{y}}+\theta^*-4\varepsilon) \right\lvert \geq 4\theta^*-2\pi-16\varepsilon$. Note that the condition $\theta^*>\frac{2\pi}{3}$ implies in particular that the previous intersection in non-empty.\\
Therefore we can construct a coupling such that
\begin{equation*}
\PP\left( F \lvert E_1^c\cap\cdots\cap E_{N^1_c -1}^c\cap E_{N^1_c } \right) \geq \alpha,
\end{equation*}
where
\begin{equation*}
F= \left\{  X_{T_2}^{y}= \tilde{X}_{T_2}^{\tilde{y}} ~~ \text{and} ~~ T_2^{y}= \tilde{T}_2^{\tilde{y}} \right\}.
\end{equation*}
On the event $F$ we define $T_c=T^1_c+T_2^{y}$.\\
If $F$ does not occur, we can not directly try to couple both position and time since the two processes have not necessarily hit $\partial \Br$ at the same time. We thus have to couple first the hitting times, as we have done in step 1.\medskip\\
Let suppose that on $\left(E_1^c\cap\cdots\cap E_{N^1_c -1}^c\cap E_{N^1_c }\right)\cap F^c$, we have $T_2^{y}\leq \tilde{T}_2^{\tilde{y}}$ (the other case can be treated in the same way thanks to the symmetry of the problem). Let define
\begin{equation*}
\ell=\min\left\{n>0: T_2^{y} + T_n^{X_{T_2^{y}}} >  \tilde{T}_2^{\tilde{y}} \right\} ~~ \text{and} ~~ \tilde{\ell}=0
\end{equation*}
We clearly have $\left\lvert T_2^{y} + T_\ell^{X_{T_2^{y}}} - \left(\tilde{T}_{2}^{\tilde{y}} + \tilde{T}_{\tilde{\ell}}^{\tilde{X}_{\tilde{T}_{2}^{\tilde{y}}}}\right)\right\lvert\leq 2r$. Therefore, we can start again: we try to couple the times at which the two processes hit the boundary, and then to couple the positions and times together. \\ \\
Finally, the probability that we succeed to couple the positions and times in "one step" is:
\begin{align*}
\PP&\left( \left(\underset{k\geq 1}{\cup} \left( E_1^c\cap \cdots \cap E_{k-1}^c\cap E_k\right) \right) \cap F \right)\\
&= \PP\left( F \left\lvert \underset{k\geq 1}{\cup} \left( E_1^c\cap \cdots \cap E_{k-1}^c\cap E_k\right) \right. \right) \PP\left( \underset{k\geq 1}{\cup} \left( E_1^c\cap \cdots \cap E_{k-1}^c\cap E_k\right) \right)\\
&= \PP\left( F \left\lvert \underset{k\geq 1}{\cup} \left( E_1^c\cap \cdots \cap E_{k-1}^c\cap E_k\right) \right. \right)\\
&\geq\alpha.
\end{align*}
Thus, the coupling time $\hat{T}$ of the couples position-time satisfies:
\begin{align*}
\hat{T} &\leq_{st}  T_0+ \sum_{k=1}^{G} \left( \left(\sum_{l=1}^{G^k} T^{k,l} \right) + T^k \right)
\end{align*}
where $G\sim \mathcal{G}\left(\alpha \right)$, $G^1,G^2,\cdots\sim\mathcal{G}\left( \delta h \right)$ are independent geometric variables, and $\left(T^{k,l}\right)_{k,l\geq 1}$, $\left( T^k\right)_{k\geq 1}$ are independent random variables, independent from the geometric variables, with distribution $f_{T_2}$.\\
Let $\lambda \in \left( 0, \lambda_M\right)$, with $\lambda_M$ defined in equation \eqref{Eq lambaM}. Since all the random variables $T^{k,l}$ and $T^k$, $k,l\geq 1$, are almost surely smaller than two times the diameter of the ball $\Br$, and since $T_0$ is almost surely smaller than this diameter, we have:

\begin{align*}
\PP\left( \hat{T}>t \right) &\leq \mathrm{e}^{-\lambda t}\EE\left[ \mathrm{e}^{\lambda \hat{T}}\right]\\
&\leq  \mathrm{e}^{\lambda(T_0-t)}\EE\left[\exp\left(\lambda \sum_{k=1}^{G} \left( \left(\sum_{l=1}^{G^k} T^{k,l} \right) + T^k \right) \right)\right]\\
&\leq \mathrm{e}^{\lambda(2r-t)} \EE\left[ \prod_{k=1}^{G}\left( \left(\prod_{l=1}^{G^k} \exp\left( \lambda 4r\right)\right) \exp\left(\lambda 4r\right) \right) \right]\\
&= \mathrm{e}^{\lambda(2r-t)} \EE\left[ \prod_{k=1}^{G} \EE\left[ \mathrm{e}^{4\lambda r(G^k+1)} \right] \right].
\end{align*}
Now, using the expression of generating function of a geometric random variable we get:
\begin{align*}
\PP\left( \hat{T}>t \right) &\leq \mathrm{e}^{\lambda(2r-t)} \EE\left[ \prod_{k=1}^{G}\left(\sum_{l=1}^{\infty}\mathrm{e}^{4\lambda r(l+1)} \delta h (1-\delta h)^{l-1}\right) \right]\\
&= \mathrm{e}^{\lambda(2r-t)} \EE\left[ \left(\frac{\mathrm{e}^{8\lambda r}\delta h}{1-\mathrm{e}^{4\lambda r}(1-\delta h)}\right)^G \right]\\
&=  \mathrm{e}^{\lambda(2r-t)} \frac{\alpha\mathrm{e}^{8\lambda r}\delta h}{1-\mathrm{e}^{4\lambda r}(1-\delta h)}\frac{1}{1-\frac{\mathrm{e}^{8\lambda r}\delta h(1-\alpha)}{1-\mathrm{e}^{4\lambda r}(1-\delta h)}}\\
&= \mathrm{e}^{-\lambda t}\frac{\alpha\mathrm{e}^{10\lambda r}\delta h}{1-\mathrm{e}^{4\lambda r}(1-\delta h)-\mathrm{e}^{8\lambda r}\delta h(1-\alpha)}.
\end{align*}

This calculations are valid for $\lambda>0$ such that the generating functions are well defined, that is for $\lambda>0$ satisfying
\begin{equation*}
\mathrm{e}^{4\lambda r}(1-\delta h)<1 ~~ \text{and} ~~ \frac{\mathrm{e}^{8\lambda r}\delta h(1-\alpha)}{1-\mathrm{e}^{4\lambda r}(1-\delta h)}<1.
\end{equation*}
The first condition is equivalent to $\lambda<\frac{1}{4r}\log\left( \frac{1}{1-\delta h} \right)$.\\
The second condition is equivalent to $\delta h(1-\alpha)s^2+(1-\delta h)s -1 <0$ with $s=\mathrm{e}^{4\lambda r}$. It gives $s_1<s<s_2$ with $s_1=\frac{-(1-\delta h)-\sqrt{\Delta}}{2\delta h(1-\alpha)}<0$ and $s_2=\frac{-(1-\delta h)+\sqrt{\Delta}}{2\delta h(1-\alpha)}>1$ where $\Delta=(1-\delta h)^2+4\delta h(1-\alpha)>0$. And finally we get $\lambda<\frac{1}{4r}\log\left(s_2 \right)$.\\
Therefore, the estimation for $\PP\left( \hat{T}>t\right)$ is indeed valid for all $\lambda\in\left( 0, \lambda_M\right)$.
The conclusion of the theorem follows immediately.

\end{proof}

\begin{Rema}\label{Rem theta plus petit que 2pi/3}
If $\theta^*\in\left( 0,\frac{2\pi}{3}\right]$, Step $1$ of the proof of Theorem \ref{Thm convergence billiard avec theta*} fails: the intervals on which the random variables $T_0+T_2^w$ and $\tilde{T}_0+\tilde{T}_2^{\tilde{w}}$ are continuous can have an empty intersection. Similarly, in Step $2$, the intersection of the intervals on which the couples $\left( X_{T_2}^{X_{T_c^1}^w},T_2^{X_{T_c^1}^w} \right)$ and $\left( \tilde{X}_{T_2}^{\tilde{X}_{T_c^1}^{\tilde{w}}},\tilde{T}_2^{\tilde{X}_{T_c^1}^{\tilde{w}}} \right)$ are continuous can be empty if $\theta^*\leq \frac{\pi}{2}$.\\
However, instead of trying to couple the times or both positions and times in two jumps, we just need more jumps to do that. Therefore, the method and the results are similar in the case $\theta^*\leq \frac{2\pi}{3}$, the only difference is that the computations and notations will be much more awful.
\end{Rema}

\section{Stochastic billiard in a convex set with bounded curvature}\label{Section Stochastic billiard in a convex set with bounded curvature}

We make the following assumption on the set $K$ in which the stochastic billiard evolves:\\

\textbf{Assumption $(\mathcal{K})$:}
\begin{quote} 
$K$ is a compact convex set with curvature bounded from above by $C<\infty$ and bounded from below by $c>0$.
\end{quote}
This means that for each $x\in\partial K$, there is a ball $B_1$ with radius $\frac{1}{C}$ included in $K$ and a ball $B_2$ containing $K$, so that the tangent planes of $K$, $B_1$ and $B_2$ at $x$ coincide (see Figure \ref{Figure hypothese (K)}). In fact, for $x\in\partial K$, the ball $B_1$ is the ball with radius $\frac{1}{C}$ and with center the unique point at distance $\frac{1}{C}$ from $x$ in the direction of $n_x$. And $B_2$ is the one with the center at distance $\frac{1}{c}$ from $x$ in the direction of $n_x$.\\

\definecolor{ttqqqq}{rgb}{0.2,0.,0.}
\definecolor{qqqqff}{rgb}{0.,0.,1.}
\definecolor{ffqqqq}{rgb}{1.,0.,0.}
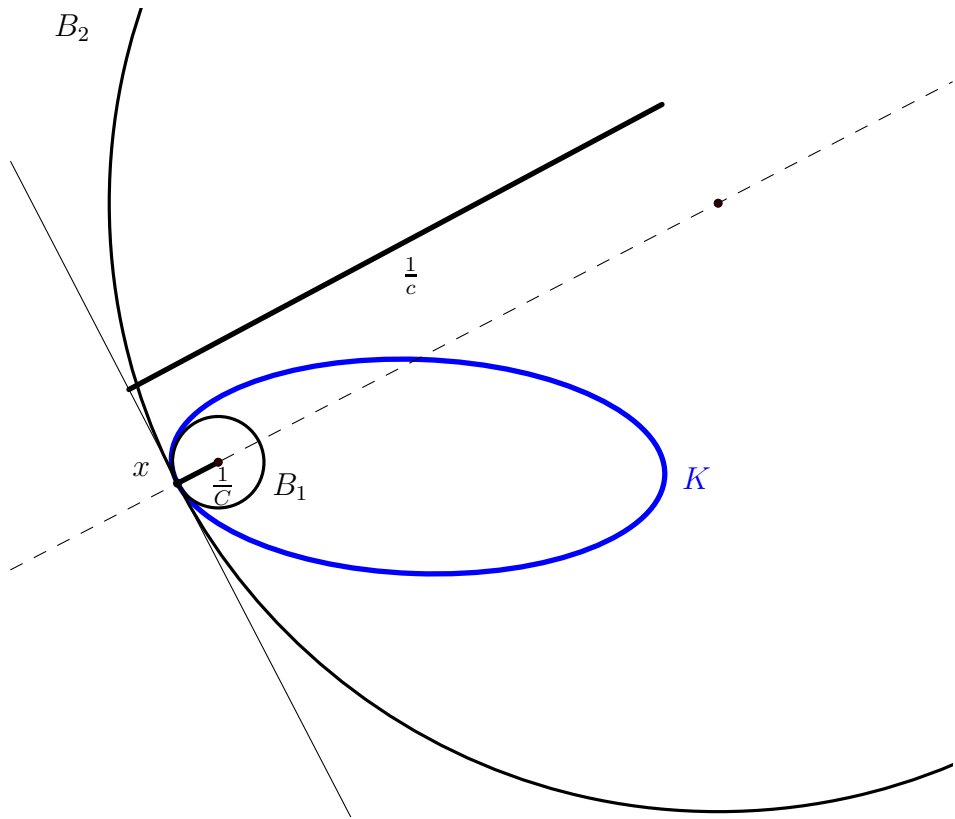
\begin{figure}
\begin{center}
\begin{tikzpicture}[line cap=round,line join=round,>=triangle 45,x=1.0cm,y=1.0cm]
\clip(-4.52,-3) rectangle (8.,7.76);
\draw [rotate around={177.89895737959574:(0.8971937941376569,1.6628121909862212)},line width=2.pt,color=qqqqff] (0.8971937941376569,1.6628121909862212) ellipse (3.281637848234518cm and 1.4246016723740447cm);
\draw [line width=0.4pt,domain=-4.52:8.] plot(\x,{(-645.9794430571505-415.84600120290315*\x)/215.60163868717063});
\draw [line width=0.4pt,dash pattern=on 5pt off 5pt,domain=-4.52:8.] plot(\x,{(--1094.702010712673--215.60163868717063*\x)/415.84600120290315});
\draw [line width=1.2pt] (-1.7599882128680713,1.7199772653319878) circle (0.6082762524907557cm);
\draw [line width=1.2pt] (4.886536348815252,5.165968288278606) circle (8.095007373566787cm);
\draw [line width=2.pt] (-2.3,1.44)-- (-1.7599882128680713,1.7199772653319878);
\draw [line width=2.pt] (-2.946520181237938,2.686988815585338)-- (4.14,6.48);

\draw [fill] (-2.3,1.44) circle (1.5pt);
\draw (-2.78,1.63) node {$x$};
\draw[color=qqqqff] (4.6,1.51) node {$K$};
\draw [fill=ttqqqq] (-1.7599882128680713,1.7199772653319878) circle (1.5pt);
\draw [fill=ttqqqq] (4.886536348815252,5.165968288278606) circle (1.5pt);
\draw[color=black] (0.88,11.81) node {$e$};
\draw[color=black] (-1.7,1.4) node {$\frac{1}{C}$};
\draw[color=black] (0.8,4.2) node {$\frac{1}{c}$};
\draw[color=black] (-0.8,1.4) node {$B_1$};
\draw[color=black] (-3.7,7.5) node {$B_2$};

\end{tikzpicture}
\caption{Illustration of Assumption $(\mathcal{K})$ }
\label{Figure hypothese (K)}
\end{center}
\end{figure}
In this section, we consider the stochastic billiard in such a convex $K$.\\
Let us observe that the case of the disc is a particular case. Moreover, Assumption $(\mathcal{K})$ excludes in particular the case of the polygons: because of the upper bound $C$ on the curvature, the boundary of $K$ can not have "corners", and because of the lower bound $c$, the boundary can not have straight lines.\medskip\\
In the following, $D$ will denote the diameter of $K$, that is
\begin{equation*}
D= \max\{ \lVert x-y \lVert: x,y\in\partial K \}.
\end{equation*}

\subsection{The embedded Markov chain}

\textbf{Notation}: We define $l_{x,y}=\frac{y-x}{\lVert x-y \lVert}=-l_{y,x}$ and we denote by $\varphi_{x,y}$ the angle between $l_{x,y}$ and the normal $n_x$ to $\partial K$ at the point $x$ (see Figure \ref{def de l et phi}). \medskip\\
\begin{figure}
\begin{center}
\definecolor{qqwuqq}{rgb}{0.,0.39215686274509803,0.}
\definecolor{xdxdff}{rgb}{0.49019607843137253,0.49019607843137253,1.}
\begin{tikzpicture}[line cap=round,line join=round,>=triangle 45,x=1.0cm,y=1.0cm]
\clip(-5,-3) rectangle (5,3);
\draw [shift={(-3.4396039331105075,0.5997755434854761)},line width=1.pt] (0,0) -- (-45.82383909301144:0.6) arc (-45.82383909301144:-29.54082006920573:0.6);
\draw [shift={(-0.9879044656034811,-1.923462004330948)},line width=1.pt] (0,0) -- (81.0196404585698:0.6) arc (81.0196404585698:134.17616090698854:0.6) ;
\draw [color=blue,rotate around={0.:(0.,0.)},line width=2.pt] (0.,0.) ellipse (3.6055512754639882cm and 2.cm);
\draw [line width=1.pt] (-3.4396039331105075,0.5997755434854761)-- (-0.9879044656034811,-1.923462004330948);
\draw [dashed,line width=1.pt,domain=-4.3:-2.5] plot(\x,{(--831.9999999999993--220.13465171907242*\x)/124.75331304497902});
\draw [dashed,line width=1.pt,domain=-2.7:1] plot(\x,{(--831.9999999999993--63.22588579862278*\x)/-400.08009690083713});
\draw [dashed,line width=1.pt,domain=-4:-2] plot(\x,{(--297.07060584328354--124.75331304497902*\x)/-220.13465171907242});
\draw [dashed,line width=1.pt,domain=-1.1:-0.7] plot(\x,{(-273.62832530359185-400.08009690083713*\x)/-63.22588579862278});
\draw [->,line width=1.pt] (-3.4396039331105075,0.5997755434854761) -- (-2.7304085424598323,-0.13011344256048274);

\draw[fill] (-0.9879044656034811,-1.923462004330948) circle (2.5pt);
\draw (-0.8,-2.3) node {$x$};
\draw[fill] (-3.4396039331105075,0.5997755434854761) circle (2.5pt);
\draw(-3.5,0.97) node {$y$};
\draw (-2.34,0.47) node {$\varphi_{y,x}$};
\draw (-1.3,-1) node {$\varphi_{x,y} $};
\draw(-3.05,-0.2) node {$l_{y,x}$};
\draw [color=blue] (3.2,1.7) node {$K$};
\end{tikzpicture}
\end{center}
\caption{Definition of the quantities $\varphi_{x,y}$ and $l_{y,x}$ for $x,y\in \partial K$}
\label{def de l et phi}
\end{figure}
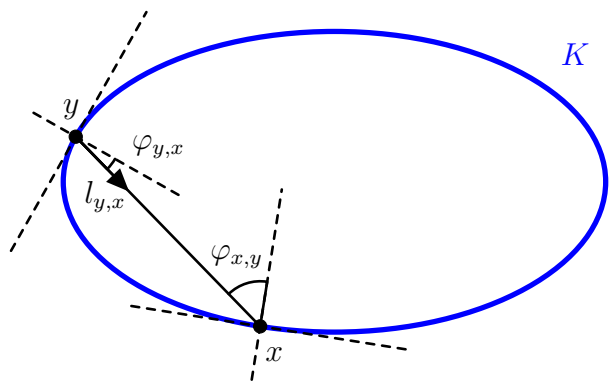
The following property, proved by Comets and al. in \cite{CPSV}, gives the dynamics of the Markov chain $(X_{T_n})_{n\geq 0}$ defined in Section \ref{Subsection Description of the process}

\begin{Prop}\label{Prop noyau de transition}
The transition kernel of the chain $(X_{T_n})_{n\geq 0}$ is given by:
\begin{equation*}
\PP\left( X_{T_{n+1}}\in A \left| X_{T_n}=x \right. \right) = \int_A Q(x,y)\dt y
\end{equation*}
where
\begin{equation*}
Q(x,y)= \frac{\rho(U_x^{-1}l_{x,y})\cos(\varphi_{y,x})}{\lVert x-y \lVert}.
\end{equation*}
\end{Prop}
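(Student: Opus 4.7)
My plan is to condition on the post-bounce velocity $V_{T_n}$ and change variables from directions in $\SS_x$ to points on $\partial K$, recovering the Jacobian $\cos(\varphi_{y,x})/\lVert x-y \lVert$ that appears in $Q(x,y)$. Conditional on $X_{T_n}=x$, we have $V_{T_n}=U_x\eta_n$ with $\eta_n\sim\gamma$; since $U_x$ is a rotation of $\S$ mapping $\SS_e$ onto $\SS_x$, the law of $V_{T_n}$ is the push-forward of $\gamma$ under $U_x$ and hence has density $v\mapsto\rho(U_x^{-1}v)$ with respect to arc length $\dt\sigma$ on $\SS_x$. By convexity of $K$ and the $C^1$ regularity of $\partial K$, for every $v\in\SS_x$ outside the two tangent directions (a $\dt\sigma$-null set) the ray $\{x+tv:t>0\}$ meets $\partial K$ at a unique point $\Phi_x(v):=x+\tau(v)v$, and $\Phi_x$ is a $C^1$-diffeomorphism between the resulting open sets of full measure. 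Writing $X_{T_{n+1}}=\Phi_x(V_{T_n})$ yields
\begin{equation*}
\PP(X_{T_{n+1}}\in A\mid X_{T_n}=x)=\int_{\SS_x}\mathbf{1}_A(\Phi_x(v))\,\rho(U_x^{-1}v)\,\dt\sigma(v).
\end{equation*}

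The change of variables $y=\Phi_x(v)$, whose inverse is $y\mapsto l_{x,y}=(y-x)/\lVert y-x \lVert$, transforms this integral into $\int_A \rho(U_x^{-1}l_{x,y})\,\lvert J(y)\lvert\,\dt y$, where $\lvert J(y)\lvert$ is the Jacobian of $\Phi_x^{-1}$ and $\dt y$ denotes arc length on $\partial K$. To compute $\lvert J(y)\lvert$, I would parameterize $\SS_x$ by an angle $\theta$ and $\partial K$ locally by arc length near $y$: an infinitesimal pencil of rays from $x$ with opening $\dt\theta$ illuminates an arc of length $\dt y$ on $\partial K$, and equating the transverse extent of the pencil at $y$, namely $\lVert x-y \lVert\,\dt\theta$ to first order, with the projection of this same arc onto the direction perpendicular to $l_{x,y}$, namely $\cos(\varphi_{y,x})\,\dt y$, gives
\begin{equation*}
\dt\theta=\frac{\cos(\varphi_{y,x})}{\lVert x-y \lVert}\,\dt y.
\end{equation*}
Substituting this identity into the previous integral produces exactly the kernel $Q(x,y)$ claimed in the proposition.

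The only real obstacle is making this Jacobian computation rigorous: I would need to derive the first-order identification $\lVert x-y \lVert\,\dt\theta=\cos(\varphi_{y,x})\,\dt y$ from the $C^1$ regularity of $\partial K$ (this is the geometric heart of the argument, and can be formalized by differentiating the arc-length parameterization of $\partial K$ composed with $\Phi_x^{-1}$), and to verify that the two tangent directions form a $\dt\sigma$-null set so that the change of variables is valid almost everywhere. Beyond these geometric checks, the proof reduces to unpacking the definition of the chain.
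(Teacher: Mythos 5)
Your argument is correct, and it is essentially the standard derivation: the paper itself does not prove this proposition but imports it from \cite{CPSV}, where the proof proceeds exactly as you describe (condition on $X_{T_n}=x$, note that $V_{T_n}=U_x\eta_n$ has density $v\mapsto\rho(U_x^{-1}v)$ on $\SS_x$ because $U_x$ is an isometry, and change variables from the direction to the exit point on $\partial K$). Your Jacobian identity $\dt\theta=\cos(\varphi_{y,x})\,\dt y/\lVert x-y\lVert$ is the right one and is rigorously obtained, as you indicate, by differentiating $s\mapsto l_{x,g(s)}$ along an arc-length parameterization $g$ of $\partial K$: the derivative of this unit vector has norm $\lVert x-g(s)\lVert^{-1}\lvert\sin(\textbf{[}g'(s),l_{x,g(s)}\textbf{]})\lvert=\cos(\varphi_{g(s),x})/\lVert x-g(s)\lVert$, and the two tangent directions are indeed a null set, so no gap remains.
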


This proposition is one of the main ingredients to obtain the exponentially-fast convergence of the stochastic billiard Markov chain towards its invariant probability measure.

\begin{Theo}\label{Theo convergence de la CdM dans le convexe}
Let $K\in\RR^2$ satisfying Assumption $(\mathcal{K})$ with diameter $D$. Let $(X_{T_n})_{n\geq 0}$ be the stochastic billiard Markov chain on $\partial K$ verifying Assumption $(\mathcal{H})$.\\
There exists a unique invariant measure $\nu$ on $\partial K$ for $(X_{T_n})_{n\geq 0}$.\\
Moreover, recalling that $\theta^*=\lvert \mathcal{J}\lvert$ in Assumption $(\mathcal{H})$, we have:
\begin{enumerate}
\item if $\theta^*>\frac{C\lvert \partial K \lvert }{8}$, for all $n\geq 0$,
\begin{equation*}
\lVert \PP\left( X_{T_n}\in \cdot\right) - \nu \lVert_{TV}\leq \left(1-q_{\min}\left( \frac{8\theta^*}{C}-\lvert \partial K \lvert \right)\right)^{n-1};
\end{equation*}
\item if $\theta^*\leq\frac{C\lvert \partial K \lvert }{8}$, for all $n\geq 0$ and all $\varepsilon\in \left( 0,\frac{2\theta^*}{C}\right)$,
\begin{equation*}
\lVert \PP\left( X_{T_n}\in \cdot\right) - \nu \lVert_{TV}\leq \left(1-\alpha\right)^{\frac{n}{n_0}-1}
\end{equation*}
where
\begin{equation*}
n_0= \left\lfloor\frac{\frac{\lvert \partial K \lvert}{2}-2\varepsilon}{\frac{4\theta^*}{C}-2\varepsilon} \right\rfloor +1 ~~ \text{and} ~~ \alpha = (\frac{4\theta^*}{C})^{n_0-1}{q_{\min}}^{n_0}\left(4\left( \frac{2n_0\theta^*}{C}-(n_0-1)\varepsilon\right)-\lvert \partial K \lvert\right)
\end{equation*}
\end{enumerate}
with 
\begin{equation*}
q_{\min}=\frac{c\rho_{\min}\cos\left(\frac{\theta^*}{2} \right)}{C D}.
\end{equation*}
\end{Theo}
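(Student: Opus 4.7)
The structure mirrors the disc case of Section \ref{Section Stochastic billiard in the disc} and rests on three ingredients: a one-step lower bound for the transition kernel $Q$, an inductive enlargement of the support, and a coupling argument applied once the arcs on which the densities of two independent chains are positive together cover more than half of $\partial K$.

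For the one-step bound I would start from Proposition \ref{Prop noyau de transition}, which gives $Q(x,y) = \rho(U_x^{-1} l_{x,y})\cos(\varphi_{y,x})/\lVert x-y \lVert$, and restrict to the set $A(x) = \{y \in \partial K : U_x^{-1} l_{x,y} \in \mathcal J \text{ and } |\varphi_{y,x}|\leq \theta^*/2\}$. On this set, Assumption $(\mathcal H)$ gives $\rho(U_x^{-1}l_{x,y}) \geq \rho_{\min}$, the angular restriction gives $\cos(\varphi_{y,x}) \geq \cos(\theta^*/2)$, and $\lVert x-y \lVert \leq D$ always holds. The curvature hypothesis $(\mathcal K)$ is then used to compare $\partial K$ with its inscribed and circumscribed osculating discs of radii $1/C$ and $1/c$: this chord-length comparison produces both the lower bound $|A(x)| \geq 4\theta^*/C$ on the arc length of $A(x)$ and the additional factor $c/C$ that turns the naive estimate $\rho_{\min}\cos(\theta^*/2)/D$ on $Q$ into the stated value $q_{\min}$.

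Next, I would prove by induction on $n \geq 1$ the direct analogue of Corollary \ref{Cor densite des Phi_n, intervalles J}: for every $\varepsilon \in (0, 2\theta^*/C)$, the law of $X_{T_n}$ has, with respect to the Hausdorff measure on $\partial K$, a density bounded below by an explicit constant $c_n>0$ on an arc $\mathcal{J}_n$ of length at least $4n\theta^*/C - 2(n-1)\varepsilon$. The inductive step is a direct application of the Chapman-Kolmogorov identity $\PP(X_{T_{n+1}} \in \cdot \mid X_0 = x) = \int Q(\cdot,y)\PP(X_{T_n}\in dy \mid X_0 = x)$ combined with the one-step bound from the previous step; the introduction of $\varepsilon$ is precisely the trimming used in the disc case to keep the lower bound away from zero near the endpoints of the newly produced arc.

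Finally I would couple. Let $(X_{T_n})$ and $(\tilde X_{T_n})$ be two copies of the chain started at $x,\tilde x \in \partial K$, with associated arcs $\mathcal{J}_n$ and $\tilde{\mathcal{J}}_n$. As soon as $|\mathcal{J}_n| > |\partial K|/2$, these two arcs must intersect with arc length at least $|\mathcal{J}_n|+|\tilde{\mathcal{J}}_n| - |\partial K|$, and the densities of $X_{T_n}$ and $\tilde X_{T_n}$ are simultaneously bounded below on this intersection; a basic coupling then produces a one-step coupling with probability at least the product of the density lower bound and the length of the intersection. In Case 1 ($\theta^*>C|\partial K|/8$) this already occurs at $n=1$ and yields the announced rate directly; in Case 2, $n_0$ is by construction the smallest integer $n$ for which the arc length exceeds $|\partial K|/2$, and coupling at step $n_0$ succeeds with probability at least $\alpha$. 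Iterating in blocks of $n_0$ steps and applying the standard inequality $\lVert \mathcal L(X_{T_n}) - \nu \lVert_{TV} \leq \PP(T_c > n)$ recalled in Section \ref{Subsection Generalities on coupling} gives the announced geometric decay, and uniqueness of $\nu$ follows from the convergence. The main obstacle is the first step: the kernel identity itself is immediate, but justifying both $|A(x)|\geq 4\theta^*/C$ and the precise form $q_{\min}=c\rho_{\min}\cos(\theta^*/2)/(CD)$ requires a careful geometric argument using both curvature bounds; once this is in place the induction and the coupling are essentially formal and follow the disc template verbatim.
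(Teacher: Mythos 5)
Your overall architecture is exactly the paper's: a one-step lower bound for the kernel $Q$ on the arc reachable with a direction in $\mathcal{J}$, an induction showing that the arc carrying a positive density grows by at least $\frac{4\theta^*}{C}-2\varepsilon$ per step (with the same $\varepsilon$-trimming as in the disc), and a block coupling once the two arcs together cover more than half of $\partial K$, using $\lvert \mathcal{J}_{n}\cap\tilde{\mathcal{J}}_{n}\lvert \geq 2\left(\frac{4n\theta^*}{C}-2(n-1)\varepsilon-\frac{\lvert\partial K\lvert}{2}\right)$ and the standard inequality $\lVert \mathcal{L}(X_{T_n})-\nu\lVert_{TV}\leq \PP(T_c>n)$. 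Those parts match the paper essentially verbatim.

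The gap is in the one-step bound, which you yourself flag as the crux. Your set $A(x)$ carries the extra constraint $\lvert\varphi_{y,x}\lvert\leq\frac{\theta^*}{2}$, but $\varphi_{y,x}$ is the incidence angle at the \emph{arrival} point $y$, and it is not controlled by restricting the departure direction $U_x^{-1}l_{x,y}$ to $\mathcal{J}$: when $c/C$ is small, a chord leaving $x$ with an angle in $\mathcal{J}$ can arrive at $y$ almost tangentially. So either the arc-length bound $\lvert A(x)\lvert\geq\frac{4\theta^*}{C}$ fails for your $A(x)$ (you have cut the arc down by an uncontrolled amount), or, if you drop the extra constraint, the "naive estimate" $\cos(\varphi_{y,x})\geq\cos\left(\frac{\theta^*}{2}\right)$ is simply false. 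Correspondingly there is no independent place for an "additional factor $c/C$" to enter on top of a bound that were already valid. In the paper the factor $\frac{c\cos(\theta^*/2)}{C}$ arises jointly as a lower bound on $\cos(\varphi_{y,x})$ itself: the circumscribed disc of radius $\frac{1}{c}$ gives $\cos(\varphi_{y,x})\geq \frac{c}{2}\lVert x-y\lVert$ for any chord of $K$, and the inscribed disc of radius $\frac{1}{C}$ together with $U_x^{-1}l_{x,y}\in\mathcal{J}$ gives $\lVert x-y\lVert\geq \frac{2\cos(\theta^*/2)}{C}$; combining these yields $\cos(\varphi_{y,x})\geq\frac{c\cos(\theta^*/2)}{C}$ on all of $h_x(U_x\mathcal{J})$, hence $Q(x,y)\geq q_{\min}$ there, while the bound $\lvert h_x(U_x\mathcal{J})\lvert\geq\frac{4\theta^*}{C}$ is obtained separately from the worst-case configuration in which $x$ and $h_x(n_x)$ are antipodal on the inscribed disc. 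With the one-step estimate repaired in this way, the rest of your argument goes through as written.
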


\begin{proof}
Once more, the existence of the invariant measure is immediate since the state space $\partial K$ of the Markov chain is compact. The following shows its uniqueness and gives the speed of convergence of $(X_{T_n})_{n\geq 0}$ towards $\nu$.\\
Let $(X_{T_n})_{n\geq 0}$ and $(\tilde{X}_{T_n})_{n\geq 0}$ be two versions of the Markov chain with initial conditions $x_0$ and $\tilde{x}_0$ on $\partial K$. In order to have a strictly positive probability to couple $X_{T_n}$ and $\tilde{X}_{T_n}$ at time $n$, it is sufficient that their density functions are bounded from below on an interval of length strictly bigger than $\frac{\lvert\partial K\lvert}{2}$. Let us therefore study the length of set on which $f_{X_{T_n}}$ is bounded from below by a strictly positive constant.\\
Let $x\in \partial K$. For $v\in \SS_x$, we denote by $h_x(v)$ the unique point on $\partial K$ seen from $x$ in the direction of $v$. We firstly get a lower bound on $\lvert h_x(U_x \mathcal{J})\lvert$, the length of the subset of $\partial K$ seen from $x$ with a strictly positive density.\\
It is easy to observe, with a drawing for instance, the following facts:
\begin{itemize}
\item $\lvert h_x(U_x \mathcal{J})\lvert$ increases when $\lVert x-h_x(n_x)\lVert$ increases,
\item $\lvert h_x(U_x \mathcal{J})\lvert$ decreases when the curvature at $h_x(n_x)$ increases,
\item $\lvert h_x(U_x \mathcal{J})\lvert$ decreases when $\lvert \varphi_{h_x(n_x),x}\lvert$ increases.
\end{itemize} 
Therefore, $\lvert h_x(U_x \mathcal{J})\lvert$ is minimal when $\lVert x-h_x(n_x)\lVert$ is minimal, when the curvature at $h_x(n_x)$ is maximal, and then equal to $C$, and finally when $\varphi_{h_x(n_x),x}=0$. Moreover, the minimal value of $\lVert x-h_x(n_x)\lVert$ is $\frac{2}{C}$ since $C$ is the upper bound for the curvature of $\partial K$. The configuration that makes the quantity $\lvert h_x(U_x \mathcal{J})\lvert$ minimal is thus the case where $x$ and $h_x(n_x)$ define a diameter on a circle of diameter $\frac{2}{C}$ (see Figure \ref{Figure taille minimal de h(UJ)}). We immediately deduce a lower bound for $\lvert h_x(U_x \mathcal{J})\lvert$:
\begin{equation*}
\lvert h_x(U_x \mathcal{J})\lvert \geq 2\theta^*\times\frac{2}{C}=\frac{4\theta^*}{C}.
\end{equation*}

\definecolor{qqwuqq}{rgb}{0.,0.39215686274509803,0.}
\definecolor{uuuuuu}{rgb}{0.26666666666666666,0.26666666666666666,0.26666666666666666}
\definecolor{xdxdff}{rgb}{0.49019607843137253,0.49019607843137253,1.}
\begin{figure}
\begin{center}
\begin{tikzpicture}[line cap=round,line join=round,>=triangle 45,x=1.0cm,y=1.0cm, scale=1.2]
\clip(-3,-3) rectangle (3,3);
\draw [shift={(0.,-2.)},line width=1.pt] (0,0) -- (59.69450573316017:0.6) arc (59.69450573316017:120.1275593515289:0.6) -- cycle;
\draw [line width=2.pt] (0.,0.) circle (2.cm);
\draw [line width=1.pt] (0.,-2.)-- (-1.7364862842489186,0.9922778767136672);
\draw [line width=1.pt,] (0.,-2.)-- (1.7426158872198927,0.9814733157905142);
\draw [shift={(0.,0.)},line width=3.pt,color=red]  plot[domain=0.5129350139936567:2.6224465393432705,variable=\t]({1.*2.*cos(\t r)+0.*2.*sin(\t r)},{0.*2.*cos(\t r)+1.*2.*sin(\t r)});
\draw [<->,line width=1.pt] (0.,-2.)-- (0.,2.);

\draw [fill=black] (0.,2.) circle (1pt);
\draw (0.18,2.35) node {$y=h_x(n_x)$};
\draw [fill=black] (0.,-2.) circle (1.0pt);
\draw(0.18,-2.27) node {$x$};
\draw (0.22,-1.2) node {$\theta^*$};
\draw[color=red] (1.8,1.8) node {$h_x(U_x\mathcal{J})$};
\draw[color=black] (0.38,0.17) node {$\frac{2}{C}$};

\end{tikzpicture}
\caption{Worst scenario for the length of $h_x(U_x\mathcal{J})$ }
\label{Figure taille minimal de h(UJ)}
\end{center}
\end{figure}
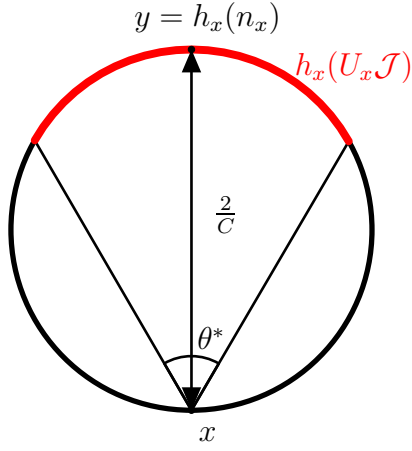

This means that the density function $f_{X_{T_1}}$ of $X_{T_1}$ is strictly positive on a subset of $\partial K$ of length at least $\frac{4\theta^*}{C}$.\\
Let now $\varepsilon\in\left( 0,\frac{2\theta^*}{C}\right)$. As it has been done in Section \ref{Section Stochastic billiard in the disc} for the disc, we can deduce that for all $n\geq 2$, the density function $f_{X_{T_n}}$ is strictly positive on a set of length at least $2n\theta^*\frac{2}{C} - 2(n-1)\varepsilon=\frac{4n\theta^*}{C}- 2(n-1)\varepsilon$.\\
Let define, for $x\in \partial K$ and $n\geq 1$, $\mathcal{J}_{x}^{n}$ the set of points of $\partial K$ that can be reached from $x$ in $n$ bounces by picking for each bounce a velocity in $\mathcal{J}$.\\
We now separate the cases where we can couple in one jump, and where we need more jumps.
\begin{itemize}
\item Case 1: $\theta^*>\frac{C\lvert \partial K \lvert}{8}$. In that case we have, for all $x\in\partial K$, $\lvert \mathcal{J}^1_x \lvert\geq \frac{4\theta^*}{C}>\frac{\lvert \partial K \lvert}{2}$, and we can thus construct a coupling $(X_{T_1},\tilde{X}_{T_1})$ such that:
\begin{equation*}
\PP\left( X_{T_1}=\tilde{X}_{T_1} \right) \geq q_{\min} \left\lvert \mathcal{J}^1_{x_0}\cap \tilde{\mathcal{J}}^1_{\tilde{x}_0} \right\lvert \geq q_{\min}\times 2\left( \frac{4\theta^*}{C} - \frac{\lvert \partial K \lvert}{2}\right)=q_{\min}\left( \frac{8\theta^*}{C}-\lvert \partial K \lvert \right),
\end{equation*}
where $q_{\min}$ is a uniform lower bound of $Q(a,b)$ with $a\in \partial K$ and $b\in h_a(U_a\mathcal{J})$, i.e.
\begin{equation*}
q_{\min}\leq \min_{a\in \partial K,b\in h_a(U_a\mathcal{J})} Q(a,b).
\end{equation*}
Let thus give an explicit expression for $q_{\min}$. Let $a\in \partial K$ and $b\in h_a(U_a\mathcal{J})$. We have
\begin{equation*}
Q(a,b)\geq \frac{\rho_{\min}\cos\left(\varphi_{b,a}\right)}{D}.
\end{equation*}
We could have $\cos\left( \varphi_{b,a}\right)=0$ if $a$ and $b$ were on a straight part of $\partial K$, which is not possible since the curvature of $K$ is bounded from below by $c$. Thus, the quantity $\cos\left( \varphi_{b,a}\right)$ is minimal when $a$ and $b$ are on a part of a disc with curvature $c$. In that case, $\cos\left( \varphi_{b,a}\right)= \frac{\delta c}{2}$, where $\delta$ is the distance between $a$ and $b$ (see the first picture of Figure \ref{Figure qmin}). Since $b\in h_a(U_a\mathcal{J})$, we have $\delta \geq \delta_{\min}:= \frac{2\cos\left( \frac{\theta^*}{2}\right)}{C}$ (see the second picture of Figure \ref{Figure qmin}). Finally we get 
\begin{equation*}
Q(a,b) \geq \frac{c\rho_{\min}\cos\left(\frac{\theta^*}{2} \right)}{C D}=:q_{\min}.
\end{equation*}

\definecolor{qqwuqq}{rgb}{0.,0.39215686274509803,0.}
\definecolor{xdxdff}{rgb}{0.49019607843137253,0.49019607843137253,1.}
\definecolor{ududff}{rgb}{0.30196078431372547,0.30196078431372547,1.}

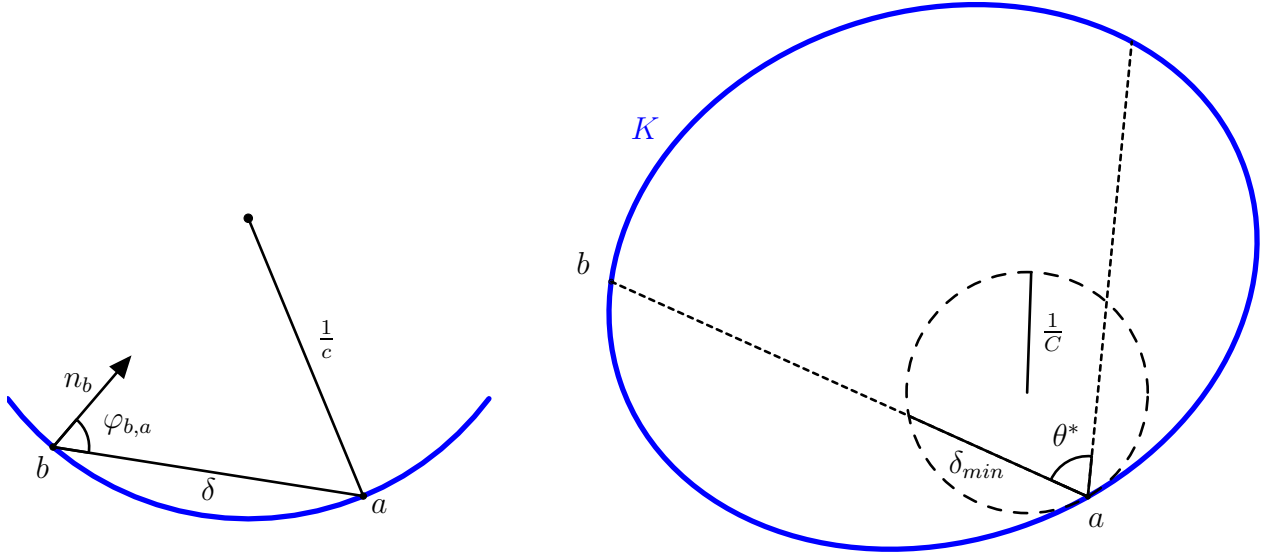
\begin{figure}
\begin{center}
\subfloat{
\begin{tikzpicture}[line cap=round,line join=round,>=triangle 45,x=1.0cm,y=1.0cm,scale=0.8]
\clip(-0.,-2) rectangle (8.7,4.5);
\draw [shift={(0.7559488947515982,0.19524870372101333)},line width=1.pt] (0,0) -- (-8.986261596885749:0.6) arc (-8.986261596885749:49.54804240912539:0.6) -- cycle;
\draw [shift={(4.,4.)},line width=2.pt,color=blue]  plot[domain=3.7850937623830774:5.639684198386302,variable=\t]({1.*5.*cos(\t r)+0.*5.*sin(\t r)},{0.*5.*cos(\t r)+1.*5.*sin(\t r)});
\draw [line width=1.pt] (5.911758963631498,-0.620084162109455)-- (4.,4.);
\draw [->,line width=1.pt] (0.7559488947515982,0.19524870372101333) -- (2.06,1.72);
\draw [line width=1.pt] (0.7559488947515982,0.19524870372101333)-- (5.911758963631498,-0.620084162109455);

\draw [fill] (4.,4.) circle (2pt);
\draw [fill] (0.7559488947515982,0.19524870372101333) circle (1.5pt);
\draw (0.58,-0.13) node {$b$};
\draw [fill] (5.911758963631498,-0.620084162109455) circle (1.5pt);
\draw (6.18,-0.79) node {$a$};
\draw[color=black] (5.3,1.99) node {$\frac{1}{c}$};
\draw[color=black] (1.18,1.27) node {$n_b$};
\draw[color=black] (3.34,-0.5) node {$\delta$};
\draw (2.,0.55) node {$\varphi_{b,a}$};
\end{tikzpicture}
}
\subfloat{
\definecolor{qqwuqq}{rgb}{0.,0.39215686274509803,0.}
\definecolor{xdxdff}{rgb}{0.49019607843137253,0.49019607843137253,1.}
\begin{tikzpicture}[line cap=round,line join=round,>=triangle 45,x=1.0cm,y=1.0cm,scale=0.9]
\clip(-3,-3) rectangle (7.5,6);
\draw [shift={(4.616838632110627,-1.7790238656083985)},line width=1.pt] (0,0) -- (84.48919866102547:0.6) arc (84.48919866102547:155.69483416600787:0.6) -- cycle;

\draw [rotate around={18.68460205564692:(2.33,1.47)},line width=2.pt,color=blue] (2.33,1.47) ellipse (4.874832417498782cm and 3.9161959985037806cm);
\draw [line width=1.pt,dash pattern=on 5pt off 5pt] (3.72,-0.24) circle (1.7812675236915647cm);
\draw [line width=1.pt,dash pattern=on 2pt off 2pt] (4.616838632110627,-1.7790238656083985)-- (-2.423201068148151,1.4004401129060973);
\draw [line width=1.pt,dash pattern=on 2pt off 2pt] (4.616838632110627,-1.7790238656083985)-- (5.265885352937779,4.948290664062741);
\draw [line width=1.pt] (3.72,-0.24)-- (3.780008643680274,1.5402564291814629);
\draw [line width=1.pt] (4.616838632110627,-1.7790238656083985)-- (1.9724089424144924,-0.5847310348742095);

\draw[color=blue] (-1.92,3.67) node {$K$};
\draw [fill=] (4.616838632110627,-1.7790238656083985) circle (1pt);
\draw (4.74,-2.13) node {$a$};
\draw  (-2.423201068148151,1.4004401129060973) circle (1pt);
\draw (-2.84,1.67) node {$b$};
\draw (4.3,-0.87) node {$\theta^*$};
\draw[color=black] (4.1,0.65) node {$\frac{1}{C}$};
\draw[color=black] (2.98,-1.3) node {$\delta_{min}$};

\end{tikzpicture}
}
\caption{Illustration for the calculation of a lower bound for $\cos\left( \varphi_{b,a}\right)$ with $a\in \partial K$ and $b\in h_a(U_a\mathcal{J})$}\label{Figure qmin}
\end{center}
\end{figure}

\item Case 2: $\theta^*\leq \frac{C\lvert \partial K \lvert}{8}$. In that case, we need more than one jump to couple the two Markov chains. Therefore, defining
\begin{equation*}
n_0=\min\left\{n\geq 2: \frac{4n\theta^*}{C}-2(n-1)\varepsilon > \frac{\partial K}{2}  \right\} =\left\lfloor \frac{\frac{\lvert \partial K \lvert}{2}-2\varepsilon}{\frac{4\theta^*}{C}-2\varepsilon} \right\rfloor +1,
\end{equation*}
we get that the intersection $\mathcal{J}^{n_0}_{x_0}\cap \tilde{\mathcal{J}}^{n_0}_{\tilde{x}_0} $ is non-empty, and then we can construct $X_{T_{n_0}}$ and $\tilde{X}_{T_{n_0}}$ such that the probability $\PP\left( X_{T_{n_0}}=\tilde{X}_{T_{n_0}} \right)$ is strictly positive. It remains to estimate a lower bound of this probability.\\
First, we have
\begin{equation*}
\left\lvert \mathcal{J}_{x_0}^{n_0} \cap \tilde{\mathcal{J}}_{\tilde{x}_0}^{n_0} \right\lvert \geq 2\left( \frac{4n_0\theta^*}{C}-2(n_0-1)\varepsilon-\frac{\lvert \partial K \lvert}{2}\right)=4\left( \frac{2n_0\theta^*}{C}-(n_0-1)\varepsilon\right)-\lvert \partial K \lvert.
\end{equation*}
Moreover, let $x\in \{x_0,\tilde{x}_0\}$ and $y\in \mathcal{J}_{x_0}^{n_0} \cap \tilde{\mathcal{J}}_{\tilde{x}_0}^{n_0}$. We have:
\begin{align*}
Q^{n_0}&(x,y)\\
&\geq \int_{h_x(U_x\mathcal{J})} \int_{h_{z_1}(U_{z_1}\mathcal{J})} \cdots \int_{h_{z_{n-2}}(U_{z_{n-2}}\mathcal{J})} Q(x,z_1)Q(z_1,z_2) \cdots Q(z_{n_0-1},y) \dt z_1 \dt z_2 \cdots \dt z_{n_0-1}\\
&\geq (\frac{4\theta^*}{C})^{n_0-1}{q_{\min}}^{n_0}.
\end{align*}
We thus deduce:
\begin{align*}
\PP\left( X_{T_{n_0}}=\tilde{X}_{T_{n_0}} \right) &\geq (\frac{4\theta^*}{C})^{n_0-1}{q_{\min}}^{n_0} \left\lvert \mathcal{J}_{x_0}^{n_0} \cap \tilde{\mathcal{J}}_{\tilde{x}_0}^{n_0} \right\lvert\\
&\geq (\frac{4\theta^*}{C})^{n_0-1}{q_{\min}}^{n_0}\left(4\left( \frac{2n_0\theta^*}{C}-(n_0-1)\varepsilon\right)-\lvert \partial K \lvert\right).
\end{align*}

\end{itemize}
We can now conclude, including the two cases: let define
\begin{equation*}
m_0= \mathbf{1}_{\theta^*>\frac{C\lvert \partial K \lvert }{8}} + \left(\left\lfloor\frac{\frac{\lvert \partial K \lvert}{2}-2\varepsilon}{\frac{4\theta^*}{C}-2\varepsilon} \right\rfloor +1\right) \mathbf{1}_{\theta^*\leq\frac{C\lvert \partial K \lvert }{8}}
\end{equation*}
and
\begin{equation*}
\alpha = q_{\min}\left( \frac{8\theta^*}{C}-\lvert \partial K \lvert \right)\mathbf{1}_{\theta^*>\frac{C\lvert \partial K \lvert }{8}} + (\frac{4\theta^*}{C})^{m_0-1}{q_{\min}}^{m_0}\left(4\left( \frac{2m_0\theta^*}{C}-(m_0-1)\varepsilon\right)-\lvert \partial K \lvert\right)\mathbf{1}_{\theta^*\leq\frac{C\lvert \partial K \lvert }{8}}.
\end{equation*}
We have proved that we can construct a coupling $\left( X_{T_{m_0}},\tilde{X}_{T_{m_0}}\right)$ such that $\PP\left( X_{T_{m_0}}=\tilde{X}_{T_{m_0}} \right) \geq \alpha$, and then we get
\begin{equation*}
\lVert \PP\left( X_{T_n}\in \cdot\right) - \nu \lVert_{TV}\leq \left(1-\alpha\right)^{\frac{n}{m_0}-1}.
\end{equation*}
\end{proof}

\subsection{The continuous-time process}

In this section, we suppose $\lvert \mathcal{J}\lvert =\theta^*=\pi$.

\begin{Prop}\label{Prop T1 dans le convexe}
Let $K\subset \RR^2$ satisfying Assumption $(\mathcal{K})$.
Let $(X_t,V_t)_{t\geq 0}$ the stochastic billiard process evolving in $K$ and verifying Assumption $(\mathcal{H})$ with $\lvert \mathcal{J}\lvert=\pi$.\\
For all $x\in\partial K$, the first hitting-time $T_1^x$ of $\partial K$ starting at point $x$ is $c\rho_{\min}$-continuous on $\left[0,\frac{2}{C}\right]$.
\end{Prop}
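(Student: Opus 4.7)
The plan is to work directly from the transition-kernel formula of Proposition \ref{Prop noyau de transition}. Since $T_1^x = \|x - X_{T_1}\|$ when $x \in \partial K$, and since the hypothesis $|\mathcal{J}| = \pi$ forces $\rho \geq \rho_{\min}$ on the whole half-sphere $\mathbb{S}_e$, for any measurable $B \subset [0, 2/C]$ one has
\begin{equation*}
\PP(T_1^x \in B) \;=\; \int_{\partial K} \mathbf{1}_{\|x-y\|\in B}\,Q(x,y)\,dy \;\geq\; \rho_{\min}\int_{\partial K} \mathbf{1}_{\|x-y\|\in B}\,\frac{\cos\varphi_{y,x}}{\|x-y\|}\,dy.
\end{equation*}
The strategy is then to change variable from arc length on $\partial K$ to $t = \|x - y\|$ and to extract a factor $c$ from the lower curvature bound.

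Concretely, I would split $\partial K \setminus \{x\}$ into the two arcs based at $x$ and treat one, call it $\partial K^+$, parameterized by arc length $s \mapsto y(s)$ with $y(0) = x$. Writing $t(s) := \|x-y(s)\|$, the argument rests on two elementary geometric identities. First, the Jacobian identity
\begin{equation*}
|t'(s)| \;=\; \sin\varphi_{y(s),x},
\end{equation*}
obtained by differentiating $t^2 = \|y-x\|^2$ and decomposing $l_{y,x}$ into its $n_y$- and tangent-components at $y(s)$. Second, the key curvature inequality
\begin{equation*}
\cos\varphi_{y,x} \;=\; \langle l_{y,x},\, n_y\rangle \;\geq\; \tfrac{c}{2}\,\|x-y\|,
\end{equation*}
obtained by expanding $\|x - (y + n_y/c)\|^2 \leq 1/c^2$, which is valid because $K$ lies inside the outer osculating ball $B_2(y)$ of radius $1/c$ centered at $y + n_y/c$. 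A symmetric computation with the inner osculating ball $B_1(y) \subset K$ shows that $\varphi_{y,x} = 0$ forces $\|x-y\| \geq 2/C$; together with $t'(0^+) = 1$, this guarantees that $s \mapsto t(s)$ is a $C^1$-diffeomorphism from $[0, \tilde s_0]$ onto $[0, 2/C]$, where $\tilde s_0 := \inf\{s>0 : t(s) = 2/C\}$.

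The change of variables then produces
\begin{equation*}
\int_0^{\tilde s_0}\mathbf{1}_{t(s) \in B}\,\frac{\cos\varphi_{y(s),x}}{t(s)}\,ds \;=\; \int_0^{2/C} \mathbf{1}_{t \in B}\,\frac{\cos\varphi_{y,x}}{t\,\sin\varphi_{y,x}}\,dt \;\geq\; \frac{c}{2}\,|B|,
\end{equation*}
using $\cos\varphi/(t\sin\varphi) \geq c/2$ together with $\sin\varphi \leq 1$. The same lower bound applies on the second arc $\partial K^-$, doubling the total to $c\,|B|$, and multiplication by $\rho_{\min}$ yields the announced $c\rho_{\min}$-continuity. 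The main technical point is the curvature inequality $\cos\varphi_{y,x} \geq (c/2)\|x-y\|$; once this is in hand, monotonicity of $t$ on $[0,\tilde s_0]$ and the change of variables are routine.
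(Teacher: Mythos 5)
Your proof is correct, and it takes a genuinely different route from the paper's. The paper works entirely at the source point $x$: it parameterizes the outgoing direction by the angle $\Theta$ to $n_x$, asserts that $\{T_1^x\in[r,R]\}$ is the event that $\Theta$ falls in a union of two angle-intervals, bounds the length of each interval from below by comparing the chord of $\partial K$ with the chord of the circumscribed osculating circle of radius $\frac 1c$ at $x$ (whose length at angle $\theta$ is $\frac 2c\cos\theta$), and concludes with the mean value theorem applied to $\arccos$, whose derivative has modulus at least $1$. You instead work at the target point $y$: you integrate the kernel $Q(x,y)$ of Proposition \ref{Prop noyau de transition} over the boundary, change variables from arc length to chord length $t=\lVert x-y\rVert$ with Jacobian $\sin\varphi_{y,x}$, and the whole estimate collapses to the inequality $\cos\varphi_{y,x}\geq \frac c2\lVert x-y\rVert$ --- the circumscribed osculating circle at $y$ rather than at $x$, which is the same inequality the paper invokes in the proof of Theorem \ref{Theo convergence de la CdM dans le convexe}. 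Both arguments yield the same constant $c\rho_{\min}$. What your version buys is a rigorous replacement for the step the paper leaves as ``clear'': your observation that $t'$ cannot vanish while $t<\frac 2C$ (via the inscribed osculating circle at $y$), together with $t'(0^+)=1$, is precisely the monotonicity/unimodality statement needed to know that the preimage of an interval of chord lengths is a union of two arcs, and it also guarantees that each arc sweeps out all of $[0,\frac 2C]$ because the chord in the normal direction has length at least $\frac 2C$. The paper's version is shorter and stays with elementary circle geometry at $x$, at the price of that unproved claim; yours is slightly longer but fits more naturally with the kernel formalism used in the rest of Section 4.
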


\begin{proof}
Let $x\in\partial K$.
Let us recall that the curvature of $K$ is bounded from above by $C$, which means that for each $x\in\partial K$, there is a ball $B_1$ with radius $\frac{1}{C}$ included in $K$ so that the tangent planes of $K$ and $B_1$ at $x$ coincide. Therefore, starting from $x$, the maximal time to go on another point of $\partial K$ is bigger than $\frac{2}{C}$ (the diameter of the ball $B_1$).\\
That is why we are going to prove the continuity of $T_1^x$ on the interval $\left[0,\frac{2}{C}\right]$. Let thus $0\leq r \leq R \leq \frac{2}{C}$.\\
Let $\Theta$ be a random variable living in $\left[-\frac{\pi}{2},\frac{\pi}{2}\right]$ such that the velocity vector $\left( \cos(\Theta),\sin(\Theta)\right)$ follows the law $\gamma$.\\
The time $T^x_1$ being completely determined by the velocity $V_0$ and thus by its angle with respect to $n_x$, it is clear that there exist $-\frac{\pi}{2}\leq \theta_1\leq \theta_2\leq \theta_3\leq \theta_4\leq \frac{\pi}{2}$ such that we have:
\begin{align*}
\PP\left( T_1^x\in \left[r,R\right]\right) &= \PP\left( \Theta\in \left[\theta_1,\theta_2\right]\cup \left[\theta_3,\theta_4\right]\right).
\end{align*}
Then, thanks to assumption $(\mathcal{H})$ on the law $\gamma$, and since we assume here that $\lvert \mathcal{J}\lvert=\pi$, the density function of $\Theta$ is bounded from below by $\rho_{\min}$ on $\left[-\frac{\pi}{2},\frac{\pi}{2}\right]$. It gives:
\begin{align*}
\PP\left( T_1^x\in \left[r,R\right]\right) &\geq \rho_{\min}\left(\theta_2-\theta_1+\theta_4-\theta_3 \right).
\end{align*}
Moreover, since the curvature is bounded from below by $c$, there exists a ball $B_2$ with radius $\frac{1}{c}$ containing $K$ so that the tangent planes of $K$ and $B_2$ at $x$ coincide. And it is easy to see that the differences $\theta_2-\theta_1$ and $\theta_4-\theta_2$ are larger than the difference $\alpha_2-\alpha_1$ where $\alpha_1$ and $\alpha_2$ are the angles corresponding to the distances $r$ and $R$ starting from $x$ and to arrive on the ball $B_2$.\\
The time of hitting the boundary of $B_2$ is equal to $d\in\left[0,\frac{2}{C}\right]$ if the angle between $n_x$ and the velocity is equal to $\arccos\left(\frac{cd}{2}\right)$.  We thus deduce:
\begin{align*}
\PP\left( T_1^x\in \left[r,R\right]\right) &\geq 2\rho_{\min}\left(\arccos\left( \frac{cr}{2}\right)-\arccos\left(\frac{cR}{2} \right)\right)\\
&\geq 2\rho_{\min}\left\lvert \frac{cr}{2}-\frac{cR}{2} \right\lvert\\
&= \rho_{\min}c\left(R-r\right),
\end{align*}
where we have used the mean value theorem for the second inequality.\\
\end{proof}

Let us introduce some constants that will appear in the following results.\\
Let $\beta>0$ and $\delta>0$ such that $\frac{\lvert \partial K \lvert}{3}-\max\{ 2\delta;\beta+\delta\}>0$.\\
Let $\varepsilon\in\left( 0,\min\{\beta;\frac{2}{C}\} \right)$ such that $h >0$ where
\begin{equation}\label{def de h dans le convexe}
h= \frac{\delta}{D} \left( \frac{\beta c}{2}\right)^2 - \varepsilon M,
\end{equation}
with
\begin{equation}\label{def de M dans le convexe}
M=2\left( \frac{1}{\frac{1}{C}-\varepsilon}+\frac{1}{\beta-\varepsilon} +C\right).
\end{equation}

Let us remark that $M$ is non decreasing with $\varepsilon$, so that it is possible to take $\varepsilon$ small enough to have $h>0$.

\begin{Prop}\label{Prop continuite de (X,T) dans convexe}
Let $K\subset \RR^2$ satisfying Assumption $(\mathcal{K})$ with diameter $D$. 
Let $(X_t,V_t)_{t\geq 0}$ the stochastic billiard process evolving in $K$ and verifying Assumption $(\mathcal{H})$ with $\lvert \mathcal{J}\lvert=\pi$.\\
Let $x,\tilde{x}\in \partial K$ with $x\neq \tilde{x}$.\\
There exist $R_1>0$, $R_2>0$ and $J^*\subset \partial K$, with $\lvert J^*\lvert <h\varepsilon$, such that the couples $(X_{T_2}^x,T_2^x)$ and $(\tilde{X}_{\tilde{T}_2}^{\tilde{x}},\tilde{T}_2^{\tilde{x}})$ are both $\eta$-continuous on $J^*\times \left( R_1,R_2\right)$, with
\begin{equation*}
\eta = \frac{1}{2}\left(\frac{c\rho_{\min}}{2D} \right)^2\left(\frac{1}{C}-\varepsilon\right)\left(\beta-\varepsilon\right).
\end{equation*}
Moreover we have $R_2-R_1\geq 2\left( h\varepsilon - \lvert J^*\lvert\right)$.
\end{Prop}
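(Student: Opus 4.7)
The plan is to lower bound the joint density of $(X_{T_2}^x, T_2^x)$ on a suitable rectangle $J^* \times (R_1, R_2) \subset \partial K \times \RR^+$ by using the two-step transition kernel of Proposition \ref{Prop noyau de transition}, and then to observe that the construction is manifestly symmetric in $x$ and $\tilde{x}$. Parametrizing $\partial K$ by arc length $s$ and writing $y(s)$ for the corresponding boundary point (playing the role of the intermediate hit), set $\phi_z(s) = \|x - y(s)\| + \|y(s) - z\|$. Since the first hit time is determined by the first hit location, $T_1^x = \|x - y\|$, the joint density of $(X_{T_2}^x, T_2^x)$ takes the form
\begin{equation*}
 f_{(X_{T_2}^x, T_2^x)}(z, t) \;=\; \sum_{s\,:\,\phi_z(s) = t} \frac{Q(x, y(s))\, Q(y(s), z)}{|\phi_z'(s)|},
\end{equation*}
and the Jacobian satisfies $|\phi_z'(s)| \leq 2$, so each valid summand contributes at least $\tfrac12 Q(x,y)Q(y,z)$ to the density.

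Next I would perform the geometric construction. Using the assumption $|\partial K|/3 > \max\{2\delta, \beta + \delta\}$, I would carve $\partial K$ into three disjoint arcs: an intermediate arc $A$ on which $\|x - y\|, \|\tilde{x} - y\| \geq 1/C - \varepsilon$ for every $y \in A$, and a second-hit region containing the target $J^*$ such that $\|y - z\| \geq \beta - \varepsilon$ for every $y \in A$ and $z \in J^*$. Since $\theta^* = \pi$ forces $\mathcal{J} = \SS_e$, the density $\rho$ is bounded below by $\rho_{\min}$ on the whole half-sphere. The lower curvature bound $c$ yields, via the circumscribed disc $B_2(b)$ of Assumption $(\mathcal{K})$, the key inequality $\cos(\varphi_{b,a}) \geq c\|a-b\|/2$ already exploited in the proof of Theorem \ref{Theo convergence de la CdM dans le convexe}. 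Combining with $\|x-y\|, \|y-z\| \leq D$ produces
\begin{equation*}
 Q(x, y) \;\geq\; \frac{c\,\rho_{\min}(1/C - \varepsilon)}{2D}, \qquad Q(y, z) \;\geq\; \frac{c\,\rho_{\min}(\beta - \varepsilon)}{2D},
\end{equation*}
and dividing by the bound $|\phi_z'| \leq 2$ gives precisely the constant $\eta$ claimed in the statement.

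The main obstacle I expect is the compatibility check: for every $(z, t) \in J^* \times (R_1, R_2)$ one must exhibit at least one $s$ with $y(s) \in A$ and $\phi_z(s) = t$, so that the sum above is non-empty. For fixed $z \in J^*$, applying the intermediate value theorem to the continuous function $\phi_z$ on the arc $A$ produces a reachable time interval $[a_z, b_z]$, and I would set $R_1 = \sup_{z \in J^*} a_z$ and $R_2 = \inf_{z \in J^*} b_z$. Estimating $R_2 - R_1$ then amounts to controlling $|a_z - a_{z'}|$ and $|b_z - b_{z'}|$ as $z$ varies across $J^*$; this is where the constant $M$ from \eqref{def de M dans le convexe} enters as a Lipschitz-type bound for the map $z \mapsto \phi_z$. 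The three summands in $M$ arise respectively from bounding terms in $1/\|x-y\|$ via $\|x-y\| \geq 1/C - \varepsilon$, the analogous terms for $\|y-z\|$ via $\|y-z\| \geq \beta - \varepsilon$, and the turning rate of the normal through the upper curvature bound $C$. A careful Taylor-type estimate then converts the definition of $h$ from \eqref{def de h dans le convexe} into the required lower bound $R_2 - R_1 \geq 2(h\varepsilon - |J^*|)$.

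Finally, the arc $A$ is chosen so that the distance lower bounds $\|x - y\|, \|\tilde x - y\| \geq 1/C - \varepsilon$ hold simultaneously for both starting points, and the set $J^*$ and the interval $(R_1, R_2)$ likewise depend only on data which is symmetric in $x$ and $\tilde{x}$. Running the identical calculation with $x$ replaced by $\tilde{x}$ therefore shows that $(\tilde{X}_{\tilde{T}_2}^{\tilde{x}}, \tilde{T}_2^{\tilde{x}})$ is also $\eta$-continuous on the same rectangle $J^* \times (R_1, R_2)$, completing the proposition.
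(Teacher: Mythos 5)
Your overall skeleton --- expressing the law of $(X_{T_2}^x,T_2^x)$ through the two-step kernel of Proposition \ref{Prop noyau de transition}, changing variables in the intermediate hitting point, bounding $Q$ below by $\frac{c\rho_{\min}}{2D}$ times a distance via the curvature inequality $\cos(\varphi_{b,a})\geq \frac{c\lVert a-b\lVert}{2}$, and dividing by $\lvert \partial_s\varphi\lvert\leq 2$ to produce $\eta$ --- matches the paper's. But two essential ingredients are missing, and they are exactly where the difficulty lies. First, you assert that the intermediate arc $A$ and the window $(R_1,R_2)$ ``depend only on data which is symmetric in $x$ and $\tilde x$'', but you never construct such data. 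For a generic arc $A$ the reachable time intervals $[a_z,b_z]$ (from $x$) and $[\tilde a_z,\tilde b_z]$ (from $\tilde x$) need not intersect: if $x$ is much closer to $A$ than $\tilde x$ is, the supremum of the $\tilde a_z$ can exceed the infimum of the $b_z$. The paper resolves this by centring $A$ at the point $\bar y$ where the perpendicular bisector of $[x,\tilde x]$ meets $\partial K$ (the farther of the two intersection points), so that $\varphi_x(s_{\bar y},t)=\varphi_{\tilde x}(s_{\bar y},t)$; Lemma \ref{Lemme r1<r2} shows this common value lies strictly inside both windows, which is what makes $(R_1,R_2)$ nonempty with the stated length. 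Without that construction your symmetry step fails.

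Second, your bound $R_2-R_1\geq 2\left(h\varepsilon-\lvert J^*\lvert\right)$ requires a \emph{lower} bound $\lvert\partial_s\varphi_w(s,t)\lvert\geq h>0$ on $B_{\bar y}^\varepsilon\times J^*$, not merely the upper bound by $2$: otherwise $b_z-a_z$ can be arbitrarily small, since $s\mapsto\varphi_w(s,t)$ has a critical point where the ellipse with foci $w$ and $g(t)$ is tangent to $\partial K$, and near that configuration the reachable time window collapses. This is the content of Lemma \ref{Lemme minoration phi'(s)}: one identifies the unique $t_{z_w}$ with $\partial_s\varphi_w(s_{\bar y},t_{z_w})=0$, proves $\lvert\partial_t\partial_s\varphi_w\lvert\geq\frac{1}{D}\left(\frac{\beta c}{2}\right)^2$, and chooses $J^*=I^*_{\beta,\delta}$ at distance at least $\delta$ from $t_{z_x}$ and $t_{z_{\tilde x}}$ and at least $\beta$ from $s_{\bar y}$; the constant $M$ of \eqref{def de M dans le convexe} is an upper bound on $\partial_s^2\varphi_w$ used to propagate the resulting bound from $s_{\bar y}$ to all of $B_{\bar y}^\varepsilon$, which is how $h$ in \eqref{def de h dans le convexe} acquires the form $\frac{\delta}{D}\left(\frac{\beta c}{2}\right)^2-\varepsilon M$. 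Your reading of $M$ as a Lipschitz constant of $z\mapsto\phi_z$ is not what is needed (that constant is simply $1$ by Cauchy--Schwarz and accounts for the $-\lvert J^*\lvert$ term), and a ``careful Taylor-type estimate'' cannot yield $h$ without first isolating and excluding the critical points $t_{z_x},t_{z_{\tilde x}}$ --- this is precisely why $\beta$, $\delta$ and the constraint $\frac{\lvert\partial K\lvert}{3}>\max\{2\delta,\beta+\delta\}$ appear in the hypotheses.
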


\begin{Rema}
The following proof is largely inspired by the proof of Lemma 5.1 in \cite{CPSV}.
\end{Rema}

\begin{proof}
Let $x,\tilde{x}\in \partial K$, $x\neq \tilde{x}$. Let us denotes by $\Delta_{x\tilde{x}}$ the bisector of the segment defined by the two points $x$ and $\tilde{x}$. The intersection $\Delta_{x\tilde{x}}\cap \partial K$ contains two points, let thus define $\bar{y}$ the one which achieves the larger distance towards $x$ and $\tilde{x}$ (we consider this point of intersection since we need in the sequel to have a lower bound on $\lVert x-\bar{y}\lVert$ and $\lVert \tilde{x}-\bar{y}\lVert$).\\
Let $t\in I\mapsto g(t)$ be a parametrization of $\partial K$ with $g(0)=\bar{y}$, such that $\lVert g'(t) \lVert=1$ for all $t\in I$. Consequently, the length of an arc satisfies length$(g_{|[s,t]})=||g(t)-g(s)||=|t-s|$. We can thus write $I=\left[0, \left| \partial K \right| \right]$, and $g(0)=g(\left| \partial K \right|)$. Note that the parametrization $g$ is $C^2$ thanks to Assumption $(\mathcal{K})$.\\
In the sequel, for $z\in \partial K$, we write $s_z$ (or $t_z$) for the unique $s\in I$ such that $g(s)=z$. And for $A\subset \partial K$, we define $I_A=\{ t\in I: g(t)\in A\}$.\\
Let define, for $s,t\in I$ and $w\in\{x,\tilde{x}\}$:
\begin{equation*}
\varphi_{w}(s,t)= \lVert w-g(s)\lVert + \lVert g(s)-g(t)\lVert.
\end{equation*}

\begin{Lemm}\label{Lemme minoration phi'(s)}
There exists an interval $I^*_{\beta,\delta}\subset I$, satisfying $\lvert I^*_{\beta,\delta}\lvert <h\varepsilon$, such that for $w\in\{x,\tilde{x}\}$:
 \begin{equation*}
\lvert \partial_s \varphi_{w}(s,t)\lvert  \geq h, ~~~ \text{for } s\in B_{\bar{y}}^\varepsilon ~ \text{ and }~t\in I^*_{\beta,\delta},
\end{equation*}
where $B_{\bar{y}}^\varepsilon= \{ s\in I; \lvert s-s_{\bar{y}} \lvert \leq \varepsilon\}$.
\end{Lemm}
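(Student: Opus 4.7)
The plan is to use the explicit formula
\[
\partial_s \varphi_w(s,t) = \left\langle g'(s),\, \frac{g(s)-w}{\lVert g(s)-w\rVert}\right\rangle + \left\langle g'(s),\, \frac{g(s)-g(t)}{\lVert g(s)-g(t)\rVert}\right\rangle,
\]
whose vanishing locus is exactly the billiard-reflection configuration (equal angles of incidence and reflection at $g(s)$ for a ray joining $w$ and $g(t)$). I would (i) lower-bound $|\partial_s\varphi_w|$ at the base point $s=s_{\bar y}$ for $t$ staying away from the reflected image of $w$, and (ii) transfer this bound to $s\in B_{\bar y}^\varepsilon$ via a mean-value argument based on a uniform upper bound on $|\partial_s^2\varphi_w|$.

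At $s=s_{\bar y}$, for each $w\in\{x,\tilde x\}$ there is a unique $t^\dagger_w\in I$ with $\partial_s\varphi_w(s_{\bar y},t^\dagger_w)=0$, namely the point where a ray from $w$ reflected at $\bar y$ meets $\partial K$. I would then take $I^*_{\beta,\delta}$ to be a specific sub-interval of $I$ consisting of those $t$ for which $g(t)$ is at distance at least $\beta$ from $\bar y$ and is separated by at least $\delta$ (in arc-length along $\partial K$) from both $t^\dagger_x$ and $t^\dagger_{\tilde x}$. A geometric argument would then give
\[
|\partial_s\varphi_w(s_{\bar y},t)| \geq \frac{\delta}{D}\left(\frac{\beta c}{2}\right)^2,\qquad t\in I^*_{\beta,\delta}.
\]
The factor $\beta c/2$ comes from the lower curvature bound: the chord $g(s_{\bar y})-g(t)$ makes an angle with the tangent $g'(s_{\bar y})$ whose sine is at least $\beta c/2$, since $\partial K$ is locally at least as curved as a circle of radius $1/c$. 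The factor $\delta/D$ quantifies the sensitivity to moving $g(t)$ transversally away from the critical $t^\dagger_w$, with $D$ controlling the chord length $\lVert g(s_{\bar y})-w\rVert$.

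To pass from $s=s_{\bar y}$ to arbitrary $s\in B_{\bar y}^\varepsilon$, I would compute $\partial_s^2\varphi_w(s,t)$. The derivative of $(g(s)-a)/\lVert g(s)-a\rVert$, for $a\in\{w,g(t)\}$, contributes a term controlled by $1/\lVert g(s)-a\rVert$: for $a=w$ we have $\lVert g(s)-w\rVert\geq 1/C-\varepsilon$ on $B_{\bar y}^\varepsilon$ thanks to Assumption $(\mathcal{K})$, while for $a=g(t)$ with $t\in I^*_{\beta,\delta}$ we have $\lVert g(s)-g(t)\rVert\geq \beta-\varepsilon$; the $|g''(s)|\leq C$ bound accounts for the last summand of $M$. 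Summing these terms yields $|\partial_s^2\varphi_w(s,t)|\leq M$, so the mean value theorem gives, for $s\in B_{\bar y}^\varepsilon$,
\[
|\partial_s\varphi_w(s,t)| \;\geq\; |\partial_s\varphi_w(s_{\bar y},t)| - M\varepsilon \;\geq\; \frac{\delta}{D}\left(\frac{\beta c}{2}\right)^2 - M\varepsilon \;=\; h.
\]
The size constraint $|I^*_{\beta,\delta}|<h\varepsilon$ is imposed as a design choice when fixing the width of $I^*_{\beta,\delta}$, so that the time-window $(R_1,R_2)$ produced in Proposition \ref{Prop continuite de (X,T) dans convexe} is non-degenerate.

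The main obstacle is the quantitative geometric estimate of step (i): verifying that the correct power of $\beta c$ is two and that $\delta$ enters linearly requires decomposing $u_1(s_{\bar y})+u_2(s_{\bar y},t)$ into its $g'(s_{\bar y})$-component and its normal component and relating each to $\beta$, $\delta$, $c$, $D$, rather than merely bounding the angle between $u_1$ and $u_2$. The bookkeeping producing $M$ is routine but must yield exactly the stated form, and $I^*_{\beta,\delta}$ must simultaneously avoid the two reflected images $t^\dagger_x$ and $t^\dagger_{\tilde x}$, which is where the interval-rather-than-arbitrary-set structure is slightly delicate.
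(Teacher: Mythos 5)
Your overall architecture matches the paper's proof exactly: you locate the unique zero $t^\dagger_w$ of $t\mapsto\partial_s\varphi_w(s_{\bar y},t)$ (the paper's $t_{z_w}$), define $I^*_{\beta,\delta}$ by staying $\beta$ away from $s_{\bar y}$ and $\delta$ away from both zeros, establish a lower bound at $s=s_{\bar y}$, and transfer it to $B_{\bar y}^\varepsilon$ by bounding $\lvert\partial_s^2\varphi_w\rvert\leq M$ and applying the mean value theorem. That last transfer step, and the form of $M$, are correct and identical to the paper's.

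The genuine gap is the central estimate $\lvert\partial_s\varphi_w(s_{\bar y},t)\rvert\geq\frac{\delta}{D}\left(\frac{\beta c}{2}\right)^2$, which you assert via ``a geometric argument'' and explicitly flag as the main obstacle without supplying it; moreover the heuristic you give for it misattributes the constants. The paper's mechanism is to compute the \emph{mixed} partial $\partial_t\partial_s\varphi_w(s,t)$ and reduce it, via a product-to-sum trigonometric identity applied to the oriented angles, to
\begin{equation*}
\lvert\partial_t\partial_s\varphi_w(s,t)\rvert=\frac{1}{\lVert g(t)-g(s)\rVert}\,\bigl\lvert\cos\left(\varphi_{g(s),g(t)}\right)\cos\left(\varphi_{g(t),g(s)}\right)\bigr\rvert .
\end{equation*}
The square $\left(\frac{\beta c}{2}\right)^2$ is the product of the \emph{two} cosines, one at each endpoint of the chord $[g(s),g(t)]$, each lower-bounded by $\frac{\beta c}{2}$ through the lower curvature bound $c$ (the chord-versus-circle argument of Figure \ref{Figure qmin}); your sketch accounts for the angle at $g(s_{\bar y})$ only. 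The factor $\frac{1}{D}$ lower-bounds $\frac{1}{\lVert g(t)-g(s)\rVert}$ in this mixed derivative, not $\frac{1}{\lVert g(s_{\bar y})-w\rVert}$ as you state. Finally, $\delta$ enters linearly because one applies the mean value theorem \emph{in $t$} between $t$ and the zero $t_{z_w}$, writing $\lvert\partial_s\varphi_w(s_{\bar y},t)\rvert=\lvert\partial_s\varphi_w(s_{\bar y},t)-\partial_s\varphi_w(s_{\bar y},t_{z_w})\rvert\geq\frac{1}{D}\left(\frac{\beta c}{2}\right)^2\lvert t-t_{z_w}\rvert$. Without this computation of $\partial_t\partial_s\varphi_w$ and the identification of its absolute value as a product of two cosines, the claimed power of $\beta c$ and the linear dependence on $\delta$ are not justified, so the proof is incomplete at its key quantitative step.
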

We admit this lemma for the moment and prove it after the end of the current proof.\\
Let suppose for instance that $\partial_s \varphi_{w}(s,t)$ is positive for $s\in B_{\bar{y}}^\varepsilon$ and $t\in I^*_{\beta,\delta}$, for $w=x$ and $w=\tilde{x}$. If one or both of $\partial_s \varphi_{x}(s,t)$ and $\partial_s \varphi_{\tilde{x}}(s,t)$ are negative, we just need to consider $\lvert \varphi_x\lvert$ or $\lvert \varphi_{\tilde{x}}\lvert$, and everything works similarly.\\
We thus have, by the lemma:
 \begin{equation*}
\partial_s \varphi_{w}(s,t)  \geq h, ~~~ \text{for } s\in B_{\bar{y}}^\varepsilon ~ \text{ and }~t\in I^*_{\beta,\delta}.
\end{equation*}
Let us now define:
\begin{equation*}
r_1= \sup_{t\in I^*_{\beta,\delta} } \inf_{s\in B_{\bar{y}}^{\varepsilon}} \varphi_{x}(s,t) ~~~~ \text{and} ~~~~ r_2= \inf_{t\in I^*_{\beta,\delta}}  \sup_{s\in B_{\bar{y}}^{\varepsilon}} \varphi_{x}(s,t)
\end{equation*}
and 
\begin{equation*}
\tilde{r_1}= \sup_{t\in I^*_{\beta,\delta} } \inf_{s\in B_{\bar{y}}^{\varepsilon}} \varphi_{\tilde{x}}(s,t) ~~~~ \text{and} ~~~~ \tilde{r_2}= \inf_{t\in I^*_{\beta,\delta}}  \sup_{s\in B_{\bar{y}}^{\varepsilon}} \varphi_{\tilde{x}}(s,t).
\end{equation*}
Since $s\mapsto \varphi_{x}(s,t)$ and $s\mapsto \varphi_{\tilde{x}}(s,t)$ are strictly increasing on $B_{\bar{y}}^{\varepsilon}$ for all $t\in I^*_{\beta,\delta} $, we deduce that, considering $B_{\bar{y}}^{\varepsilon}$ as the interval $(s_1,s_2)$,
\[ r_1= \sup_{t\in I^*_{\beta,\delta} }\varphi_{x}(s_1,t), ~~ r_2= \inf_{t\in I^*_{\beta,\delta} }\varphi_{x}(s_2,t), ~~ \tilde{r_1}= \sup_{t\in I^*_{\beta,\delta} } \varphi_{\tilde{x}}(s_1,t), ~~ \tilde{r_2}= \inf_{t\in I^*_{\beta,\delta} } \varphi_{\tilde{x}}(s_2,t). \]
\begin{Lemm}\label{Lemme r1<r2}
We have $r_1<r_2$ and $\tilde{r_1}<\tilde{r_2}$.\\
Moreover, there exist $R_1,R_2$ with $0\leq R_1<R_2$ satisfying $R_2-R_1\geq 2(h\varepsilon - \lvert I_{\beta,\delta}^*\lvert)$, such that $(r_1,r_2)\cap (\tilde{r_1},\tilde{r_2})=(R_1, R_2)$.
\end{Lemm}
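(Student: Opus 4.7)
The plan is to exploit the bisector condition defining $\bar y$, which is the only unused structural piece at this stage, to produce a single reference function that dominates both the $x$- and $\tilde x$-problems simultaneously. Because $\bar y$ lies on $\Delta_{x\tilde x}$, we have $\|x-\bar y\|=\|\tilde x-\bar y\|$, so the function
\[ \varphi^0(t)\;:=\;\varphi_x(s_{\bar y},t)\;=\;\varphi_{\tilde x}(s_{\bar y},t)\;=\;\|x-\bar y\|+\|\bar y-g(t)\| \]
is common to both problems. Since the parametrization $g$ is unit-speed, $\varphi^0$ is $1$-Lipschitz in $t$, so its oscillation over $I^*_{\beta,\delta}$ is at most $|I^*_{\beta,\delta}|$.

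I would then use Lemma \ref{Lemme minoration phi'(s)} to sandwich $\varphi_w(s,t)$ around $\varphi^0(t)$ at the two endpoints of $B^\varepsilon_{\bar y}=(s_1,s_2)=(s_{\bar y}-\varepsilon,s_{\bar y}+\varepsilon)$. Under the working sign assumption $\partial_s\varphi_w(s,t)\ge h$ on $B^\varepsilon_{\bar y}\times I^*_{\beta,\delta}$ for both $w\in\{x,\tilde x\}$ (fixed just before the lemma statement), integrating the inequality from $s_{\bar y}$ to $s_1$ and from $s_{\bar y}$ to $s_2$ gives, for every $t\in I^*_{\beta,\delta}$,
\[ \varphi_w(s_1,t)\;\le\;\varphi^0(t)-h\varepsilon\qquad\text{and}\qquad \varphi_w(s_2,t)\;\ge\;\varphi^0(t)+h\varepsilon. \]
Taking $\sup_t$ and $\inf_t$ over $I^*_{\beta,\delta}$ in these bounds immediately yields
\[ r_1,\tilde r_1\;\le\;\sup_{I^*_{\beta,\delta}}\varphi^0-h\varepsilon\qquad\text{and}\qquad r_2,\tilde r_2\;\ge\;\inf_{I^*_{\beta,\delta}}\varphi^0+h\varepsilon. \]

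Combining these with the Lipschitz control on $\varphi^0$ will give the quantitative conclusion:
\[ \min(r_2,\tilde r_2)-\max(r_1,\tilde r_1)\;\ge\;2h\varepsilon-\bigl(\sup_{I^*_{\beta,\delta}}\varphi^0-\inf_{I^*_{\beta,\delta}}\varphi^0\bigr)\;\ge\;2h\varepsilon-|I^*_{\beta,\delta}|. \]
Since $|I^*_{\beta,\delta}|<h\varepsilon$, this quantity is strictly positive (and, a fortiori, $\ge 2(h\varepsilon-|I^*_{\beta,\delta}|)$), from which $r_1<r_2$ and $\tilde r_1<\tilde r_2$ follow. The same inequalities show that the open interval $(\sup\varphi^0-h\varepsilon,\inf\varphi^0+h\varepsilon)$ is contained in both $(r_1,r_2)$ and $(\tilde r_1,\tilde r_2)$; hence their intersection is the non-empty open interval $(R_1,R_2)=(\max(r_1,\tilde r_1),\min(r_2,\tilde r_2))$, of length at least $2(h\varepsilon-|I^*_{\beta,\delta}|)$. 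I do not anticipate a real obstacle: the only conceptual step is recognising that the bisector condition delivers a common reference $\varphi^0$ for both starting points, after which the proof reduces to integrating the derivative lower bound on each side of $s_{\bar y}$ and using the trivial Lipschitz estimate for $\varphi^0$.
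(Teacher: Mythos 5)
Your proof is correct and follows essentially the same route as the paper's: both arguments rest on the bisector identity $\varphi_x(s_{\bar y},t)=\varphi_{\tilde x}(s_{\bar y},t)$, integration of the lower bound $\partial_s\varphi_w\ge h$ over the half-intervals $(s_1,s_{\bar y})$ and $(s_{\bar y},s_2)$, and the Cauchy--Schwarz/Lipschitz bound $\lvert\partial_t\varphi_w\rvert\le 1$ controlling the oscillation over $I^*_{\beta,\delta}$. Your symmetric packaging around the reference function $\varphi^0$ is slightly cleaner and even yields the marginally sharper bound $R_2-R_1\ge 2h\varepsilon-\lvert I^*_{\beta,\delta}\rvert$, which implies the stated $2\left(h\varepsilon-\lvert I^*_{\beta,\delta}\rvert\right)$.
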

We admit this result to continue the proof, and will give a demonstration later.\\
We can now prove that the pairs $\left(X_{T_2}^x,T_2^x \right)$ and $\left(\tilde{X}_{\tilde{T}_2}^{\tilde{x}},\tilde{T}_2^{\tilde{x}} \right)$ are both $\eta$-continuous on $I^*_{\beta,\delta}\times (R_1,R_2)$ with some $\eta>0$ that we are going to define after the computations.\\
We first prove that $\left(X_{T_2}^x,T_2^x \right)$ is $\eta$-continuous on $I^*_{\beta,\delta}\times (r_1,r_2)$. By the same way we can prove that $\left(\tilde{X}_{\tilde{T}_2}^{\tilde{x}},\tilde{T}_2^{\tilde{x}} \right)$ is $\eta$-continuous on $I^*_{\beta,\delta}\times (\tilde{r}_1,\tilde{r}_2)$. These two facts imply immediately the continuity with $(R_1,R_2)$ since the interval $(R_1,R_2)$ is included in $(r_1,r_2)$ and $(\tilde{r}_1,\tilde{r}_2)$.\\
Let $(u_1,u_2)\subset (r_1,r_2)$ and $A\subset I^*_{\beta,\delta}$. We have:
\begin{align*}
\PP\left( X_{T_2}^x \in A, T_2^x \in (u_1,u_2) \right) &\geq \int_{I_A} \int_{B_{\bar{y}}^{\varepsilon}} Q(x,g(s))Q(g(s),g(t)) \mathbf{1}_{\varphi_{x}(s,t)\in (u_1,u_2)} \dt s \dt t.
\end{align*}
Let $s\in B_{\bar{y}}^{\varepsilon}$ and $t\in I^*_{\beta,\delta}$. We now give a lower bound of $Q(x,g(s))$ and $Q(g(s),g(t))$.\\
Proposition \ref{Prop noyau de transition} gives:
\begin{align*}
Q(x,g(s)) &= \frac{\rho(U_x^{-1}l_{x,g(s)})\cos\left(\varphi_{g(s),x}\right)}{\lVert x-g(s) \lVert}\\
&\geq \frac{c\rho_{\min}}{2D}\left( \frac{1}{C}-\varepsilon\right),
\end{align*}
where we have used the same method as in he proof of Theorem \ref{Theo convergence de la CdM dans le convexe} (with Figure \ref{Figure qmin}) to get that $\cos\left(\varphi_{g(s),x}\right)\geq \frac{\lVert x-g(s)\lVert c}{2}$, and then the fact that $\lVert x-g(s)\lVert\geq \frac{1}{C}-\varepsilon$. Let us prove this latter. With the notations of Figure \ref{Figure distance x ybarre}, by Pythagore's theorem we have, for $\bar{y}\in \{ \bar{y}_1,\bar{y}_2\}$, $\lVert x-\bar{y}\lVert ^2= \left(\frac{\lVert x -\tilde{x}\lVert}{2}\right)^2 + \lVert u-\bar{y}\lVert ^2$. Moreover, since the curvature of $K$ is bounded by $C$, it follows that $\lVert \bar{y}_1 - \bar{y}_2\lVert \geq \frac{2}{C}$, and then $\max\{ \lVert u-\bar{y}_1\lVert; \lVert u-\bar{y}_2\lVert \}\geq \frac{1}{C}$. We deduce: $\max\{ \lVert x-\bar{y}_1\lVert; \lVert x-\bar{y}_2\lVert\}\geq \frac{1}{C}$. Therefore, by the definition of $\bar{y}$, we have $\lVert x - \bar{y}\lVert \geq \frac{1}{C}$. Thus, the reverse triangle inequality gives, for $s\in B_{\bar{y}}^{\varepsilon}$, $\lVert x-g(s)\lVert\geq \frac{1}{C}-\varepsilon$.\\
\begin{figure}
\begin{center}
\definecolor{uuuuuu}{rgb}{0.26666666666666666,0.26666666666666666,0.26666666666666666}
\definecolor{qqqqff}{rgb}{0.,0.,1.}
\begin{tikzpicture}[line cap=round,line join=round,>=triangle 45,x=1.0cm,y=1.0cm,scale=1.3]
\clip(-3.984313990093323,-3) rectangle (5.773372930552262,2.5);
\draw [rotate around={14.083870871348129:(0.41,-0.35)},line width=1.pt,color=qqqqff] (0.41,-0.35) ellipse (3.4720730849085863cm and 1.8820976348071405cm);
\draw [line width=0.8pt,dash pattern=on 3pt off 3pt] (-2.2994863080744112,0.33944225108316034)-- (2.303857232870347,1.5759847330482368);
\draw [line width=0.8pt,dash pattern=on 3pt off 3pt,domain=-3.984313990093323:5.773372930552262] plot(\x,{(-1.1943138527040222--4.603343540944758*\x)/-1.2365424819650765});
\draw [line width=1.pt] (-2.2994863080744112,0.33944225108316034)-- (0.8453447444896267,-2.181161145735769);
\draw [line width=1.pt] (0.8453447444896267,-2.181161145735769)-- (2.303857232870347,1.5759847330482368);
\draw [line width=1.pt] (2.303857232870347,1.5759847330482368)-- (-0.13114446900544113,1.454068035069442);
\draw [line width=1.pt] (-2.2994863080744112,0.33944225108316034)-- (-0.13114446900544113,1.454068035069442);

\draw[color=qqqqff] (-1.47438874171917,1.2909333816423447) node {$K$};
\draw [fill=black] (-2.2994863080744112,0.33944225108316034) circle (1pt);
\draw[color=black] (-2.6306464404083862,0.5576967922296723) node {$x$};
\draw [fill=black] (2.303857232870347,1.5759847330482368) circle (1pt);
\draw[color=black] (2.445606870910126,1.7844580091316433) node {$\tilde{x}$};
\draw[color=black] (0,2.2) node {$\Delta_{x,\tilde{x}}$};
\draw [fill=uuuuuu] (0.8453447444896267,-2.181161145735769) circle (1.0pt);
\draw[color=uuuuuu] (1.1483421357953951,-2.3752495654210177) node {$\bar{y}_2$};
\draw [fill=uuuuuu] (-0.13114446900544113,1.454068035069442) circle (1.0pt);
\draw[color=uuuuuu] (-0.03611697017892446,1.7421558982039893) node {$\bar{y}_1$};
\draw[color=uuuuuu] (0.25,0.8) node {$u$};

\end{tikzpicture}
\end{center}
\caption{Upper bound for the distance $\lVert w-\bar{y}\lVert$, $w\in\{ x,\tilde{x}\}$}
\label{Figure distance x ybarre}
\end{figure}
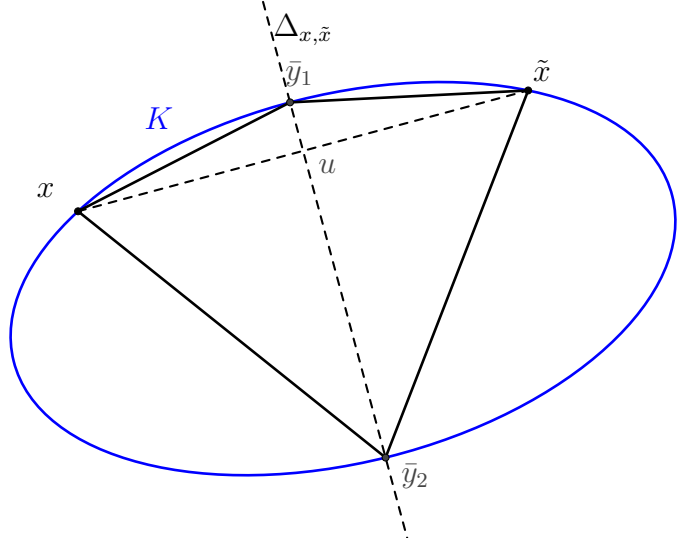
By the same way, since $\lVert g(t)-g(s)\lVert \geq \beta-\varepsilon$, we have:
\begin{align*}
Q\left( g(s),g(t) \right) \geq \frac{c\rho_{\min}}{2D}\left( \beta-\varepsilon\right).
\end{align*}
Therefore we get:
\begin{equation*}
\PP\left( X_{T_2}^x \in A, T_2^x \in (u_1,u_2) \right)  \geq a \int_{I_A} \int_{ B_{\bar{y}}^{\varepsilon}} \mathbf{1}_{\varphi_{x}(s,t)\in (u_1,u_2)} \dt s \dt t,
\end{equation*}
with 
\begin{equation}\label{def a convexe}
a=\left(\frac{c\rho_{\min}}{2D} \right)^2\left(\frac{1}{C}-\varepsilon\right)\left(\beta-\varepsilon\right).
\end{equation}
Let define, for $t\in I^*_{\beta,\delta}$:
\[ M_{x,t}(u_1,u_2):=\left\{ s\in B_{\bar{y}}^{\varepsilon} : \varphi_{x}(s,t)\in (u_1,u_2) \right\}. \]
Using the fact that $s\mapsto\varphi_{x}(s,t)$ is strictly increasing on $B_{\bar{y}}^{\varepsilon}$ for $t\in I^*_{\beta,\delta}$ we get ($\varphi_w^{-1}(s,t)$ stands for the inverse function of $s\mapsto \varphi_x(s,t)$):
\begin{align*}
\left\lvert  M_{x,t}(u_1,u_2) \right\lvert &= \left\lvert \left\{ s\in B_{\bar{y}}^{\varepsilon} : s\in \left( \varphi_{x}^{-1}(u_1,t), \varphi_{x}^{-1}(u_2,t) \right)\right\} \right\lvert\\
&= \left\lvert \left( s_1,s_2 \right)\cap \left( \varphi_{x}^{-1}(u_1,t), \varphi_{x}^{-1}(u_2,t) \right)  \right\lvert .
\end{align*}
By definition of $r_1$ and $r_2$, and since $(u_1,u_2)\subset (r_1,r_2)$ we have:
\[ \varphi_{x}(s_1,t)\leq r_1 \leq u_1 ~~~~ \text{and} ~~~~ \varphi_{x}(s_2,t) \geq r_2\geq u_2, \]
and since $s\mapsto\varphi_{x}(s,t)$ is strictly increasing:
\[ s_1\leq \varphi_{x}^{-1}(u_1,t) ~~~~\text{and} ~~~~  s_2\geq \varphi_{x}^{-1}(u_2,t). \]
Therefore we deduce:
\begin{align*}
\left\lvert M_{x,t}(u_1,u_2) \right\lvert &= \left\lvert \left( \varphi_{x}^{-1}(u_1,t), \varphi_{x}^{-1}(u_2,t) \right) \right\lvert \\
&= \left\lvert \varphi_{x}^{-1}\left( (u_1,u_2),t \right) \right\lvert\\
&\geq \frac{1}{2}(u_2-u_1).
\end{align*}
For the last inequality we have used the following property. Let $\psi:\RR\mapsto \RR$ a function. If for all $x\in [a_1,a_2]$ we have $c_1<\psi'(x)<c_2$ with $0<c_1<c_2<\infty$, then for any interval $I\subset [\psi(a_1),\psi(a_2)]$, we have $c_2^{-1}\lvert I \lvert \leq \lvert \psi^{-1}(I)\lvert \leq c_1^{-1} \lvert I \lvert$. In our case, the Cauchy-Schwarz inequality gives $\partial_s \varphi_x (s,t) \leq 2$ (see Equation \eqref{eq derive phi_s} for the expression of $\partial_s \varphi_x (s,t)$).\\
Finally we get, with $a$ given by \eqref{def a convexe}:
\begin{align*}
\PP\left( X_{T_2}^x \in A, T_2^x \in (u_1,u_2) \right) &\geq a \int_{A} \frac{1}{2}(u_2-u_1) \dt z\\
&= \frac{a}{2}(u_2-u_1)\lvert A \lvert,
\end{align*}
which proves that $\left(X_{T_2}^x , T_2^x\right)$ is $\frac{a}{2}$-continuous on $I^*_{\beta,\delta}\times (\tilde{r}_1,\tilde{r}_2)$.\\
Thanks to the remarks before, the proof is completed with $\eta=\frac{a}{2}$ and $J=I^*_{\beta,\delta}$.\\
\end{proof}

Let us now give the proofs of Lemma \ref{Lemme minoration phi'(s)} and \ref{Lemme r1<r2} that we have admitted so far.

\begin{proof}[Proof of Lemma \ref{Lemme minoration phi'(s)}]
We use the notations introduced at the end of the proof of Proposition \ref{Prop continuite de (X,T) dans convexe}.\\
We have, for $s,t\in I$:
\begin{equation}\label{eq derive phi_s}
\partial_s \varphi_{w}(s,t)=\left\langle \frac{g(s)-w}{\lVert g(s)-w \lVert} +\frac{g(s)-g(t)}{\lVert g(s)-g(t)\lVert} , g'(s)\right\rangle.
\end{equation}
By the definition of $g$, we note that $g'(s)$ is a director vector of the tangent line of $\partial K$ at point $g(s)$.\\
It is easy to verify that for $w\in\{x,\tilde{x}\}$, there exists a unique $t\in I\setminus \{s_{\bar{y}} \}  $ such that 
\begin{equation}\label{derive phi x egal zero}
\partial_s\varphi_{w}(s_{\bar{y}},t)=0.
\end{equation} 
For $w=x$ (resp. $w=\tilde{x}$), we denote by $t_{z_x}$ (resp. $t_{z_{\tilde{x}}}$) this unique element of $I$. With our notations we thus have $g(t_{z_x})=z_x$ and $g(t_{z_{\tilde{x}}})=z_{\tilde{x}}$.\\
Let $w\in\{x,\tilde{x}\}$. We have:
\begin{align*}
\partial_t \partial_s \varphi_{w}(s,t) &= \partial_t \left( \left\langle \frac{g(s)-g(t)}{\lVert g(s)-g(t)\lVert} , g'(s)\right\rangle \right)\\
&= \frac{1}{\lVert g(t)-g(s)\lVert}\left(-\left\langle g'(t),g'(s) \right\rangle + \left\langle \frac{g(t)-g(s)}{\lVert g(t)-g(s) \lVert},g'(t) \right\rangle \left\langle\frac{g(t)-g(s)}{\lVert g(t)-g(s) \lVert},g'(s) \right\rangle \right).
\end{align*}
Let us look at the term in parenthesis. Let us denote by $\textbf{[}u,v\textbf{]}$ the oriented angle between the vectors $u,v\in\RR^2$. We have:
\begin{align*}
-\left\langle g'(t),g'(s) \right\rangle &+ \left\langle \frac{g(t)-g(s)}{\lVert g(t)-g(s) \lVert},g'(t) \right\rangle \left\langle\frac{g(t)-g(s)}{\lVert g(t)-g(s) \lVert},g'(s) \right\rangle\\
&= -\cos\left(\textbf{[}g'(t),g'(s)\textbf{]} \right) + \cos\left(\textbf{[}g(t)-g(s),g'(t)\textbf{]} \right)\cos\left(\textbf{[}g(t)-g(s),g'(s)\textbf{]} \right)\\
&= -\cos\left(\textbf{[}g'(t),g'(s)\textbf{]} \right) +\frac{1}{2}\cos\left(\textbf{[}g(t)-g(s),g'(t)\textbf{]}- \textbf{[}g(t)-g(s),g'(s)\textbf{]}\right)\\
&\hspace*{5.5cm} +\frac{1}{2}\cos\left(\textbf{[}g(t)-g(s),g'(t)\textbf{]}+ \textbf{[}g(t)-g(s),g'(s)\textbf{]}\right)\\
&= -\cos\left(\textbf{[}g'(t),g'(s)\textbf{]}\right) + \frac{1}{2}\cos\left(\textbf{[}g'(s),g'(t)\textbf{]} \right)  \\
&\hspace*{5.5cm}+\frac{1}{2}\cos\left(\textbf{[}g(t)-g(s),g'(t)\textbf{]}+ \textbf{[}g(t)-g(s),g'(s)\textbf{]}\right)\\
&= -\frac{1}{2}\cos\left(\textbf{[}g'(t),g'(s)\textbf{]}\right) +\frac{1}{2}\cos\left(\textbf{[}g(t)-g(s),g'(t)\textbf{]}+ \textbf{[}g(t)-g(s),g'(s)\textbf{]}\right)\\
&= -\sin \left(\frac{1}{2}\left(\textbf{[}g(t)-g(s),g'(t)\textbf{]}+ \textbf{[}g(t)-g(s),g'(s)\textbf{]}+ \textbf{[}g'(t),g'(s)\textbf{]}\right)\right)\times\\
&\hspace*{3cm} \sin \left( \frac{1}{2}\left(\textbf{[}g(t)-g(s),g'(t)\textbf{]}+ \textbf{[}g(t)-g(s),g'(s)\textbf{]}- \textbf{[}g'(t),g'(s)\textbf{]}\right) \right)\\
&=-\sin \left( \textbf{[}g(t)-g(s),g'(s)\textbf{]}\right)\sin \left(\textbf{[}g(t)-g(s),g'(t)\textbf{]} \right).
\end{align*}
Therefore we get
\begin{equation*}
\partial_t \partial_s \varphi_{w}(s,t) = -\frac{1}{\lVert g(t)-g(s)\lVert}\sin \left( \textbf{[}g(t)-g(s),g'(s)\textbf{]}\right)\sin \left(\textbf{[}g(t)-g(s),g'(t)\textbf{]} \right),
\end{equation*}
and then
\begin{align*}
\left\lvert \partial_t \partial_s \varphi_{w}(s,t)\right\lvert & = \frac{1}{\lVert g(t)-g(s)\lVert} \left \lvert \sin \left( \textbf{[}g(t)-g(s),g'(s)\textbf{]}\right)\sin \left(\textbf{[}g(t)-g(s),g'(t)\textbf{]} \right)\right\lvert\\
&= \frac{1}{\lVert g(t)-g(s)\lVert} \left \lvert \cos \left( \varphi_{g(s),g(t)}\right)\cos\left(\varphi_{g(t),g(s)} \right)\right\lvert
\end{align*}
Let $t\in I$ such that $\lvert t-s_{\bar{y}} \lvert\geq \beta$ (we recall that $\beta$ is introduced at the beginning of the section). Using once more Figure \ref{Figure qmin}, we get, as we have done in the proof of Theorem \ref{Theo convergence de la CdM dans le convexe}:
\begin{align}\label{minoration derive t,s phi}
\left\lvert \partial_t \partial_s \varphi_{w}(s,t) \right\lvert &\geq \frac{1}{\lVert g(t)-g(s)\lVert} \left(\frac{\beta c}{2} \right)^2 \nonumber\\
&\geq \frac{1}{D}\left(\frac{\beta c}{2} \right)^2.
\end{align}
Using Equations \eqref{derive phi x egal zero} and Equation \eqref{minoration derive t,s phi}, the mean value theorem gives: for $t\in I$ such that $\lvert t-s_{\bar{y}}\lvert \geq \beta$ and $\lvert t-t_{z_w}\lvert \geq \delta$ ($\delta$ is introduced at the beginning of the section),
\begin{equation}\label{minoration derive s phi(ybarre,t)}
\left\lvert \partial_s \varphi_{w}(s_{\bar{y}},t)\right\lvert =\left\lvert \partial_s \varphi_{w}(s_{\bar{y}},t)-\partial_s \varphi_{w}(s_{\bar{y}},t_{z_w})\right\lvert  \geq  \frac{1}{D}\left( \frac{\beta c}{2}\right)^2\lvert t-t_{z_w}\lvert\geq   \frac{\delta}{D}\left( \frac{\beta c}{2}\right)^2.
\end{equation}
We want now such an inequality for $s\in I$ near from $s_{\bar{y}}$. We thus compute:
\begin{align*}
\partial_s^2 \varphi_w(s,t) &= \frac{1}{\lVert w-g(s) \lVert} + \frac{1}{\lVert g(s)-g(t) \lVert} + \left\langle \frac{g(s)-w}{\lVert g(s)-w\lVert}+\frac{g(s)-g(t)}{\lVert g(s)-g(t)\lVert },g^{''}(s) \right\rangle\\
&\hspace*{1cm} -\frac{1}{\lVert w-g(s)\lVert}\left\langle\frac{w-g(s)}{\lVert w-g(s)\lVert},g'(s) \right\rangle^2 -\frac{1}{\lVert g(s)-g(t)\lVert}\left\langle\frac{g(s)-g(t)}{\lVert g(s)-g(t)\lVert},g'(s) \right\rangle^2.
\end{align*}
We immediately deduce, using the Cauchy-Schwarz inequality, and the fact that $\lVert g'(s)\lVert=1$ for all $s\in I$:
\begin{align*}
\lvert \partial_s^2 \varphi_w(s,t) \lvert &\leq  \frac{1}{\lVert w-g(s) \lVert} + \frac{1}{\lVert g(s)-g(t) \lVert} +2 \lVert g^{''}(s)\lVert + \frac{1}{\lVert w-g(s) \lVert} + \frac{1}{\lVert g(s)-g(t) \lVert}  \\
&\leq 2\left(\frac{1}{\lVert w-g(s)\lVert} + \frac{1}{\lVert g(s)-g(t)\lVert} +C\right),
\end{align*}
where we recall that $C$ is the upper bound on the curvature of $K$.\\
Let now $t\in I$ such that $\lvert t-s_{\bar{y}}\lvert \geq \beta$ and $\lvert t-t_{z_w}\lvert \geq \delta$, and let $s\in I$ such that $\lvert s-s_{\bar{y}}\lvert \leq \varepsilon$. With such $s$ and $t$ we have $\lvert t-s\lvert\geq \beta-\varepsilon$. Moreover, we have already seen in proof of Proposition \ref{Prop continuite de (X,T) dans convexe} (with Figure \ref{Figure distance x ybarre}) that $\lVert w- g(s)\lVert \geq \frac{1}{C}-\varepsilon$ for $s\in B^\varepsilon_{\bar{y}}$. Therefore, for such $s$ and $t$:
\begin{equation}\label{derive s,s phi}
\lvert \partial_s^2 \varphi_w(s,t) \lvert \leq 2\left( \frac{1}{\frac{1}{C}-\varepsilon}+\frac{1}{\beta-\varepsilon}+C \right)  = M>0.
\end{equation}
Using once again the mean value theorem with Equations \eqref{minoration derive s phi(ybarre,t)} and \eqref{derive s,s phi}, we deduce that for all $t\in I$ such that $\lvert t-s_{\bar{y}}\lvert \geq \beta$ and $\lvert t-t_{z_w}\lvert \geq \delta$, and for all $s\in I$ such that $\lvert s-s_{\bar{y}}\lvert \leq \varepsilon$:
\begin{equation*}
\left\lvert \partial_s \varphi_{w}(s,t) \right\lvert \geq \frac{\delta}{D}\left( \frac{\beta c}{2}\right)^2 - \varepsilon M =h >0.
\end{equation*}
Let now take $I^*_{\beta,\delta}\subset I\setminus\{s_{\bar{y}},t_{z_x},t_{z_{\tilde{x}}}\}$ an interval of length strictly smaller than $h\varepsilon$ (this condition on the length of $I^*_{\beta,\delta}$ is not necessary for the lemma, but for the continuation of the proof of the proposition), and such that for all $t\in I^*_{\beta,\delta}$, $\lvert t-t_{z_x}\lvert \geq \delta$, $\lvert t-t_{z_{\tilde{x}}}\lvert \geq \delta$ and $\lvert t-s_{\bar{y}}\lvert \geq \beta$. In order to ensure that $I^*_{\beta,\delta}$ is not empty, we take $\beta$ and $\delta$ such that $\frac{\lvert \partial K \lvert}{3}-\max\{2\delta;\beta+\delta\} >0$. Indeed, it is necessary that one of the intervals $"(t_{z_x},t_{z_{\tilde{x}}})"$, $"(t_{z_x},s_{\bar{y}})"$ and $"(s_{\bar{y}},t_{z_{\tilde{x}}})"$ at which we removes a length $\beta$ or $\delta$ on the good extremity, is not empty. And since the larger of these intervals has a length at least $\frac{\partial K}{3}$, we obtain the good condition on $\beta$ and $\delta$.\\
We thus just proved that $\left\lvert \partial_s \varphi_{w}(s,t) \right\lvert \geq h$ for $s\in B_{\bar{y}}^\varepsilon$ and $t\in I^*_{\beta,\delta}$, which is the result of the lemma.
\end{proof}

\begin{proof}[Proof of Lemma \ref{Lemme r1<r2}]
Let first prove that $r_1<r_2$. We do it only for $r_1$ and $r_2$ since it is the same for $\tilde{r_1}$ and $\tilde{r_2}$. We have:
\begin{align*}
r_2-r_1 &= \inf_{t\in I^*_{\beta,\delta} }\varphi_{x}(s_2,t) - \sup_{t\in I^*_{\beta,\delta} }\varphi_{x}(s_1,t)\\
&= \inf_{t\in I^*_{\beta,\delta} }\varphi_{x}(s_2,t) - \inf_{t\in I^*_{\beta,\delta} }\varphi_{x}(s_1,t) - \left( \sup_{t\in I^*_{\beta,\delta} }\varphi_{x}(s_1,t) -\inf_{t\in I^*_{\beta,\delta} }\varphi_{x}(s_1,t)\right)\\
&\geq h(s_2-s_1)- \left(\sup_{t\in I^*_{\beta,\delta}} \left\lvert \partial_t\varphi_{x}(s_1,t) \right\lvert\right) \left\lvert I^*_{\beta,\delta} \right\lvert\\
&\geq 2h\varepsilon - \left\lvert I^*_{\beta,\delta} \right\lvert,
\end{align*}
and this quantity is strictly positive since $\lvert I^*_{\beta,\delta}\lvert <h\varepsilon$ by construction.\\
For the first inequality, we have used the mean value theorem twice, and for the last inequality, we have used the fact that $\sup_{t\in I^*_{\beta,\delta}} \left\lvert \partial_t\varphi_{x}(s_1,t) \right\lvert =  \sup_{t\in I^*_{\beta,\delta}}\left\lvert \left\langle \frac{g(t)-g(s_1)}{\lVert g(t)-g(s_1)\lVert},g'(t)\right\rangle \right\lvert \leq 1$ thanks to the Cauchy-Schwarz inequality.\\
Let us now prove that the intersection $(r_1,r_2)\cap (\tilde{r}_1,\tilde{r}_2)$ is not empty.\\
Let $t\in I^*_{\beta,\delta}$, we have:
\begin{align*}
r_2-\varphi_{x}(s_{\bar{y}},t) &= \inf_{t\in I^*_{\beta,\delta} }\varphi_{x}(s_2,t) - \varphi_{x}(s_{\bar{y}},t) \\
&= \inf_{t\in I^*_{\beta,\delta} }\varphi_{x}(s_2,t) - \inf_{t\in I^*_{\beta,\delta} }\varphi_{x}(s_{\bar{y}},t) - \left( \varphi_{x}(s_{\bar{y}},t) - \inf_{t\in I^*_{\beta,\delta} }\varphi_{x}(s_{\bar{y}},t)\right)\\
&\geq h(s_2-s_{\bar{y}})-\lvert I^*_{\beta,\delta}\lvert\\
&= h\varepsilon - \lvert I^*_{\beta,\delta}\lvert\\
&>0,
\end{align*}
once again thanks to the mean value theorem. Similarly we have 
\begin{align*}
\varphi_{x}(s_{\bar{y}},t) - r_1 &= \varphi_{x}(s_{\bar{y}},t) -\sup_{t\in I^*_{\beta,\delta} }\varphi_{x}(s_1,t)\\
&= \varphi_{x}(s_{\bar{y}},t) - \sup_{t\in I^*_{\beta,\delta}} \varphi_x(s_{\bar{y}},t) - \left( \sup_{t\in I^*_{\beta,\delta} }\varphi_{x}(s_1,t) - \sup_{t\in I^*_{\beta,\delta}} \varphi_x(s_{\bar{y}},t)\right)\\
&\geq -\lvert I^*_{\beta,\delta} \lvert +h(s_{\bar{y}}-s_1)\\
&=  h\varepsilon - \lvert I^*_{\beta,\delta}\lvert\\
&>0.
\end{align*}
Moreover, since $\bar{y}\in \Delta_{x,\tilde{x}}$, we have $\varphi_{x}(s_{\bar{y}},t)=\varphi_{\tilde{x}}(s_{\bar{y}},t)$, and we thus can prove the same inequalities with $\tilde{r}_1$ and $\tilde{r}_2$ instead of $r_1$ and $r_2$.\\
Finally we thus get that the interval $(R_1,R_2)=(r_1,r_2)\cap (\tilde{r_1},\tilde{r_2})$ is well defined and
\begin{equation*}
R_2-R_1\geq 2\left( h\varepsilon - \lvert I^*_{\beta,\delta} \lvert \right).
\end{equation*}
\end{proof}

\begin{Rema}
The fact that $\lvert \mathcal{J} \lvert = \pi$ is here to ensure that the process can go from $x$ and $\tilde{x}$ to $\bar{y}$ in the proof of Proposition \ref{Prop continuite de (X,T) dans convexe}. If $ \lvert \mathcal{J} \lvert <\pi$, since $x$ and $\tilde{x}$ are unspecified and $\bar{y}$ can therefore be everywhere on $\partial K$, nothing ensures that this path is available.
\end{Rema}

We can now state the following theorem on the speed of convergence of the stochastic billiard in the convex set $K$.

\begin{Theo}
Let $K\subset \RR^2$ satisfying Assumption $(\mathcal{K})$ with diameter $D$. 
Let $(X_t,V_t)_{t\geq 0}$ the stochastic billiard process evolving in $K$ and verifying Assumption $(\mathcal{H})$ with $\lvert \mathcal{J}\lvert=\pi$.\\
There exists a unique invariant probability measure $\chi$ on $K\times \SS$ for the process $(X_t,V_t)_{t\geq 0}$.\\ 
Moreover, let define $n_0$ and $p$ by \eqref{def n0 proc continu dans le convexe} and \eqref{def p proc continu dans le convexe} with $\zeta\in\left(0,\frac{2}{C}\right)$. Let consider $\eta$, $I^*_{\beta,\delta}$, $R_1,R_2$ as in Proposition \ref{Prop continuite de (X,T) dans convexe} and Lemma \ref{Lemme minoration phi'(s)}, and let define $\kappa$ by \eqref{def kappa proc continu dans le convexe}. \\
For all $t\geq 0$ and all $\lambda<\lambda_M$:
\begin{equation*}
\lVert \PP\left( X_t\in \cdot, V_t\in \cdot \right) - \chi \lVert_{TV} \leq C_{\lambda}\mathrm{e}^{-\lambda t},
\end{equation*}
where
\begin{equation*}
\lambda_M=\min\left\{\frac{1}{2D}\log\left( \frac{1}{1-p}\right) ; \frac{1}{2D}\log\left( \frac{-(1-p)+\sqrt{(1-p)^2+4p(1-\kappa)}}{2p(1-\kappa)}\right) \right\}
\end{equation*}
and
\begin{equation*}
C_\lambda=\frac{p\kappa \mathrm{e}^{5\lambda D}}{1-\mathrm{e}^{2\lambda D}(1-p)-\mathrm{e}^{4\lambda D}p(1-\kappa)}.
\end{equation*}
\end{Theo}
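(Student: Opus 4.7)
The proof follows the same two-step coupling strategy as Theorem \ref{Thm convergence billiard avec theta*}, with Propositions \ref{Prop T1 dans le convexe} and \ref{Prop continuite de (X,T) dans convexe} playing the roles that the explicit disc computations played in Propositions \ref{Prop densite T2} and \ref{Prop continuite (angle,temps)}. Existence of an invariant probability measure is immediate from the compactness of $K\times\SS^1$ and the Feller property of $(X_t,V_t)_{t\geq 0}$; uniqueness and the quantitative bound will come from the coupling. Let $(X_t,V_t)_{t\geq 0}$ and $(\tilde X_t,\tilde V_t)_{t\geq 0}$ be two versions of the stochastic billiard in $K$ with arbitrary initial data, and denote their first hitting times of $\partial K$ by $T_0,\tilde T_0\le D$, with corresponding hit points $w,\tilde w$.

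\textbf{Step 1} (matching the hitting times). Proposition \ref{Prop T1 dans le convexe} states that $T_1^x$ is $c\rho_{\min}$-continuous on $[0,2/C]$ uniformly in $x\in\partial K$. Convolving these uniform lower bounds over $n_0$ successive bounces, $T_{n_0}^w$ becomes $p$-continuous on some sub-interval of $[0,2n_0/C]$. Choosing $n_0$ large enough that $2n_0/C$ exceeds the maximal possible lag $|T_0-\tilde T_0|\le D$ by a margin $\zeta\in(0,2/C)$ guarantees a non-trivial overlap of the supports of $T_0+T_{n_0}^w$ and $\tilde T_0+\tilde T_{n_0}^{\tilde w}$; this is the origin of the constants $n_0$ and $p$ in the statement. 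Thereby one obtains a coupling of the two hitting times that succeeds with probability at least $p$. If it fails, the stream whose cumulative hitting time is behind is advanced by the minimum number of extra bounces needed to overtake the other --- exactly as in Step~1 of the disc proof; since each $T_1$ is a.s.\ bounded by $D$, this re-aligns the lag to at most $D$ and the attempt is repeated. The number of trials until the hitting times match is stochastically dominated by $\mathcal{G}(p)$.

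\textbf{Step 2} (matching position and time). Once both processes hit $\partial K$ at the same instant $T_c^1$, at points $y,\tilde y\in\partial K$, Proposition \ref{Prop continuite de (X,T) dans convexe} supplies a set $J^*\subset\partial K$ and an interval $(R_1,R_2)$ on which the pairs $(X_{T_2}^y,T_2^y)$ and $(\tilde X_{T_2}^{\tilde y},\tilde T_2^{\tilde y})$ are both $\eta$-continuous, yielding a joint coupling of position and time over the next two bounces with probability at least $\kappa=\eta|J^*|(R_2-R_1)$. On failure, the hitting-time lag is at most $2D$ and we return to Step 1. Concatenating both steps, the coupling time $\hat T$ is stochastically dominated by
\[
\hat T\le_{st} T_0+\sum_{k=1}^{G}\Bigl(\sum_{l=1}^{G^k}R^{k,l}+R^k\Bigr),
\]
where $G\sim\mathcal{G}(\kappa)$, $G^k\sim\mathcal{G}(p)$ are independent geometric variables and the $R^{k,l},R^k$ are independent positive variables a.s.\ bounded by $2D$. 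A Markov inequality combined with the generating function of a geometric variable --- carried out exactly as in the proof of Theorem \ref{Thm convergence billiard avec theta*}, with $4r$ replaced by $2D$, $\delta h$ by $p$ and $\alpha$ by $\kappa$ --- produces $\PP(\hat T>t)\le C_\lambda\mathrm{e}^{-\lambda t}$ for every $\lambda\in(0,\lambda_M)$ with the announced constants, and the total-variation inequality $\|\PP(X_t\in\cdot,V_t\in\cdot)-\chi\|_{TV}\le\PP(\hat T>t)$ concludes.

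The principal technical obstacle is the bookkeeping in Step 1: after each failed coupling of the hitting times one has to re-align the two streams so that the lag drops back below $D$, which is needed to apply Proposition \ref{Prop T1 dans le convexe} anew. Thanks to the a.s.\ bound $T_1\le D$, this re-alignment requires only finitely many extra bounces whose total duration is controlled by the $2D$ bound on each two-bounce increment appearing in the Laplace transform estimate. The rest is a purely algebraic copy of the generating-function manipulation already done in the disc case, which is why the final formulas for $\lambda_M$ and $C_\lambda$ are obtained from those of Theorem \ref{Thm convergence billiard avec theta*} by the substitution indicated above.
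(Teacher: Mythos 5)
Your proposal follows the paper's proof essentially verbatim: the same two-step coupling (first matching the hitting times via the $n_0$-fold convolution of Proposition \ref{Prop T1 dans le convexe}, then matching position and time simultaneously via Proposition \ref{Prop continuite de (X,T) dans convexe}), the same geometric domination of the coupling time, and the same generating-function computation carried over from Theorem \ref{Thm convergence billiard avec theta*}. The only point to watch is that a Step-1 attempt consists of $n_0$ bounces and is therefore a.s.\ bounded by $n_0 D$ rather than $2D$ --- but the paper's own proof writes the stochastic domination with $n_0 D$ and then uses $2D$ in the final formula, so your account is consistent with the constants as stated in the theorem.
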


\begin{proof}
As previously, the existence of an invariant probability measure for the stochastic billiard process comes from the compactness of $K\times\SS^1$. The following proof ensures its uniqueness and gives an explicit speed of convergence.\\
Let $(X_t,V_t)_{t\geq 0}$ and $(\tilde{X}_t,\tilde{V}_t)_{t\geq 0}$ be two versions of the stochastic billiard with $(X_0,V_0)=(x_0,v_0)\in K\times\S$ and $(\tilde{X}_0,\tilde{V}_0)=(\tilde{x}_0,\tilde{v}_0)\in K\times\S$.\\
We define (or recall the definition for $T_0$ and $\tilde{T}_0$):
\[ T_0=\inf\{ t\geq 0, x_0+tv_0 \notin K\}, ~~~~~ w=x_0+T_0 v_0\in\partial K,  \]
and
\[\tilde{T}_0=\inf\{ t\geq 0, \tilde{x}_0+t\tilde{v}_0 \notin K\}, ~~~~~ \tilde{w}=\tilde{x}_0+\tilde{T}_0 \tilde{v}_0 \in \partial K .\]
\textbf{Step 1}. From Proposition \ref{Prop T1 dans le convexe}, we deduce that for all $x\in\partial K$ and all $\zeta\in \left( 0,\frac{1}{C}\right)$, $T_n^x$ is $(c\rho_{\min})^n\zeta^{n-1}$-continuous on the interval $\Gamma_n=\left[ (n-1)\zeta, \frac{nC}{2}-(n-1)\zeta\right]$.\\
Let thus $\zeta \in \left( 0,\frac{1}{C}\right)$ and let define 
\begin{equation}\label{def n0 proc continu dans le convexe}
n_0= \min\left\{ n\geq 1: \lvert \Gamma_n\lvert >D\right\} = \left\lfloor \frac{D-2\zeta}{2\left(\frac{1}{C}-1\right)} \right\rfloor +1.
\end{equation}
The variables $T_0+T_{n_0}^w$ and $\tilde{T}_0+\tilde{T}_{n_0}^{\tilde{w}}$ are both $(c\rho_{\min})^{n_0}\zeta^{n_0-1}$-continuous on \\$\left[ T_0+(n_0-1)\zeta, T_0+\frac{nC}{2}-(n_0-1)\zeta\right] \cap \left[ \tilde{T}_0+(n_0-1)\zeta, \tilde{T}_0+\frac{nC}{2}-(n_0-1)\zeta\right]$. Since $\left\lvert T_0 - \tilde{T}_0\right\lvert\leq D$, this intersection is non-empty and its length is larger that $\frac{2n_0}{C}-2(n_0-1)\zeta-D$.\\
Let define 
\begin{equation}\label{def p proc continu dans le convexe}
p=(c\rho_{\min})^{n_0}\zeta^{n_0-1}\left(\frac{2n_0}{C}-2(n_0-1)\zeta-D \right).
\end{equation}
Using the fact that the for all $w\in\partial K$, $T^w_{n_0}\leq n_0D$ almost surely, we deduce that we can construct a coupling such that the coupling-time $T^1_c$ of $T_0+T_{n_0}^w$ and $\tilde{T}_0+\tilde{T}_{n_0}^{\tilde{w}}$ satisfies:
\begin{equation*}
T_c^1 \leq_{st} T_0 + n_0DG^1
\end{equation*}
with $G^1\sim \mathcal{G}\left(p \right)$.\medskip\\
\textbf{Step 2}. Once the coupling of these times has succeed, we try to couple the couples \\$\left(X_{T_2}^{X_{T^1_c}^w},T_2^{X_{T^c_1}^w}\right)$ and $\left(\tilde{X}_{\tilde{T}_2}^{\tilde{X}_{T^1_c}^{\tilde{w}}},\tilde{T}_2^{\tilde{X}_{T^1_c}^{\tilde{w}}} \right)$. By the Proposition \ref{Prop continuite de (X,T) dans convexe}, we can construct a coupling such that
\begin{equation*}
\PP\left( X_{T_2}^{X_{T^1_c}^w}=\tilde{X}_{\tilde{T}_2}^{\tilde{X}_{T_c^1}^{\tilde{w}}} ~~\text{and} ~~ T_2^{X_{T_c^1}^w}=\tilde{T}_2^{\tilde{X}_{T_c^1}^{\tilde{w}}} \right) \geq \eta\lvert I^*_{\beta,\delta}\lvert (R_2-R_1).
\end{equation*}
Defining 
\begin{equation}\label{def kappa proc continu dans le convexe}
\kappa=\eta\lvert I^*_{\beta,\delta}\lvert (R_2-R_1),
\end{equation}
we get that the entire coupling-time of the two processes satisfies:
\begin{equation*}
\hat{T}\leq_{st} T_0+\sum_{l=1}^G \left( n_0D G^l +n_0D \right)=T_0+\sum_{l=1}^G \left( n_0D (G^l +1) \right)
\end{equation*}
where $G$ as a geometric distribution with parameter $\kappa$ and the $(G^l)_{l\geq 1}$ are independent geometric random variables with parameter $p$, and independent of $G$.\\
Finally, we get
\begin{align*}
\PP\left( \hat{T}>t\right) &\leq \mathrm{e}^{-\lambda t} \frac{p\kappa \mathrm{e}^{5\lambda D}}{1-\mathrm{e}^{2\lambda D}(1-p)-\mathrm{e}^{4\lambda D}p(1-\kappa)},
\end{align*}
for all $\lambda \in \left(0,\lambda_M \right)$.

\end{proof}

\section{Discussion}

All the results presented in this paper are in dimension $2$. However, the ideas developed here can be adapted to higher dimensions. Let us briefly explain it.
\subsubsection*{Stochastic billiard in a ball of $\RR^d$}
Let us first look at the stochastic billiard $(X,V)$ in a ball $\mathcal{B}\subset\RR^d$ with $d\geq 2$.\\
As we have done in Section \ref{Section Stochastic billiard in the disc}, we can represent the Markov chain $(X_{T_n},V_{T_n})_{n\geq 0}$ by another Markov chain. Indeed, for $n\geq 1$, the position $X_{T_n}\in\partial \mathcal{B}$ can be uniquely represented by its hyperspherical coordinates: a $(d-1)$-tuple $(\Phi_n^1,\cdots,\Phi_n^{d-1})$ with $\Phi_n^1,\cdots,\Phi_n^{d-2}\in \left[ 0,\pi\right)$ and $\Phi_n^{d-1}\in \left[0,2\pi\right)$.\\
Similarly, for $n\geq 1$, the vector speed $V_{T_n}\in \left\{v\in\SS^{d-1}: v\cdot n_{X_{T_n}}\geq 0\right\}$ can be represented by its hyperspherical coordinates.\\
Thereby, we can give relations between the different random variables as in Proposition \ref{Prop lien entre les variables dans le cercle}, and in theory, we can do explicit computations to get lower bounds on the needed density function. Then the same coupling method in two steps can be applied. Nevertheless, it could be difficult to manage the computations in practice when the dimension increases.

\subsubsection*{Stochastic billiard in a convex set $K\subset\RR^d$}
To get bounds on the speed of convergence of the stochastic billiard $(X,V)$ in a convex set $K\subset \RR^d$, $d\geq 2$, satisfying Assumption $(\mathcal{K})$, we can apply exactly the same method as in Section \ref{Section Stochastic billiard in a convex set with bounded curvature}. The main difficulty could be the proof of the equivalent of Proposition \ref{Prop continuite de (X,T) dans convexe}. But it can easily be adapted, and we refer to the proof of Lemma 5.1 in \cite{CPSV}, where the authors lead the proof in dimension $d\geq 3$.

\paragraph{Acknowledgements.} The author thanks H\'el\`ene Gu\'erin and Florent Malrieu for their help in this work.\\This work was supported by the Agence Nationale de la Recherche project PIECE 12-JS01-0006-01.

\bibliographystyle{plain}
\bibliography{bibliobilliard}

\end{document}